\theoremstyle{plain}
\newtheorem{theorem}{Theorem}[section]
\newtheorem{corollary}[theorem]{Corollary}
\newtheorem{proposition}[theorem]{Proposition}
\newtheorem{lemma}[theorem]{Lemma}
\theoremstyle{definition}
\newtheorem{definition}[theorem]{Definition}
\newtheorem{example}[theorem]{Example}
\theoremstyle{remark}
\newtheorem{remark}[theorem]{Remark}
\numberwithin{equation}{section}\theoremstyle{plain}
\newcommand{\uno}{\textbf{1}}
\newcommand{\A}{{\mathcal A}}
\newcommand{\B}{{\mathcal B}}
\newcommand{\C}{{\mathcal C}}
\newcommand{\D}{{\mathcal D}}
\newcommand{\Ll}{{\mathcal L}}
\newcommand{\Z}{{\mathcal Z}}
\newcommand{\Zz}{{\mathbb Z}}
\newcommand{\M}{\mathcal{M}}
\newcommand{\Ss}{\mathbb{S}}
\newcommand{\E}{{\mathcal E}}
\newcommand{\Aa}{\mathbb A}
\newcommand{\Rep}{\operatorname{Rep}}
\newcommand{\Sg}{\operatorname{Sg}}
\newcommand{\cd}{\mathrm{cd}}
\newcommand\Aut{\operatorname{Aut}}
\newcommand\Irr{\operatorname{Irr}}
\newcommand\Ind{\operatorname{Ind}}
\newcommand\FPdim{\operatorname{FPdim}}
\newcommand\vect{\operatorname{Vect}}
\newcommand\svect{\operatorname{sVect}}
\newcommand\id{\operatorname{id}}
\newcommand\Tr{\operatorname{Tr}}
\newcommand\co{\operatorname{co}}
\newcommand\Hom{\operatorname{Hom}}
\begin{document}
\title[Fusion rules and solvability]{On fusion rules and solvability of a fusion
category}
\author{Melisa Esca\~ nuela Gonz\' alez and Sonia Natale}
\address{Facultad de Matem\'atica, Astronom\'\i a y F\'\i sica.
Universidad Nacional de C\'ordoba. CIEM -- CONICET. Ciudad
Universitaria. (5000) C\'ordoba, Argentina}
\email{escanuela@famaf.unc.edu.ar}
\email{natale@famaf.unc.edu.ar
\newline \indent \emph{URL:}\/ http://www.famaf.unc.edu.ar/$\sim$natale}

\thanks{This work was partially supported by  CONICET and SeCYT--UNC}

\keywords{fusion category; fusion rules; solvability; $S$-matrix}

\subjclass[2010]{18D10; 16T05}

\date{December 17, 2015}

\begin{abstract} We address the question whether the condition on a fusion category being solvable or not is determined by its fusion rules. We prove that the answer is affirmative for some families of non-solvable examples arising from representations of semisimple Hopf algebras associated to exact factorizations of the symmetric and alternating groups. In the context of spherical fusion categories, we  also consider the invariant provided by the $S$-matrix of the Drinfeld center and show that this invariant does determine the solvability of a fusion category provided it is group-theoretical.
   \end{abstract}

\maketitle

\section{Introduction}

Throughout this paper we shall work over an algebraically closed field $k$ of characteristic zero. Let $G$ be a finite group. An important invariant of $G$ is given by its \emph{character table}, defined as the collection $$\{\chi_i(g_j)\}_{0\leq i, j \leq n},$$ where $\epsilon = \chi_0, \dots, \chi_n$ are the irreducible characters of $G$ over $k$ and $e = g_0, \dots, g_n$, are representatives of the conjugacy classes of $G$. Several structural properties of $G$ can be read off from its character table. For instance, the character table of $G$ allows to determine the lattice of normal subgroups of $G$ and to decide if the group $G$ is nilpotent or solvable. See \cite[pp. 23]{isaacs}. It is known, however, that the character table of a finite solvable group $G$ does not determine its derived length \cite{mattarei}, \cite{mattarei-2}.

In particular, if $G$ and $\Gamma$ are finite groups with the same character table, then $G$ is solvable if and only if $\Gamma$ is solvable.   
In addition, the knowledge of the character table of a finite group $G$ is equivalent to the knowledge of the structure constants, in the canonical basis consisting of isomorphism classes of irreducible representations, of the Grothendieck ring of the fusion category $\Rep G$ of finite dimensional representations of $G$ over $k$, so-called the \emph{fusion rules} of $\Rep G$. 

\medbreak The notions of nilpotency and solvability of a group $G$ have been extended to general fusion categories in \cite{gel-nik}, \cite{ENO2}. Let $\C$ be a fusion category over $k$. Then $\C$ is \emph{nilpotent} if there exists a series of fusion subcategories
\begin{equation}\label{cs-nil}\vect = \C_0 \subseteq \C_{1} \subseteq \dots \subseteq \C_n = \C,\end{equation}
and a series of finite groups $G_1, \dots, G_n$, such that $\C_i$ is a $G_i$-extension of $\C_{i-1}$, for all $i = 1, \dots, n$.
On the other side, $\C$ is \emph{solvable} if  there exist a sequence of fusion categories $\vect = \C_0,
\dots, \C_n = \C$, $n \geq 0$, and a sequence of cyclic groups of prime order
$G_1, \dots, G_n$, such that, for all $1\leq i \leq n$, $\C_i$ is a
$G_i$-equivariantization or a $G_i$-extension of $\C_{i-1}$. See Subsection \ref{nilp-solv}. Some features related to nilpotency and solvability have been extended as well from the context of finite groups to that of fusion categories; remarkably, an analogue of Burnside's $p^aq^b$-theorem was established for fusion categories in \cite{ENO2}.

\medbreak It is apparent from the definition of nilpotency of a fusion category $\C$ given in \cite{gel-nik} that this property depens only upon the Grothendieck ring of $\C$, that is, it is determined by its fusion rules.
In this paper we address the question whether the solvability of a fusion category $\C$ is also determined by its fusion rules.

Since a solvable fusion category has nontrivial invertible objects and a simple group has no nontrivial one-dimensional representation, then no solvable fusion category can have the same fusion rules as a simple finite group.
We show that if $\C$ is a fusion category with the same fusion rules as a dihedral group, then $\C$ is solvable. On the other hand, if $\C$ has the fusion rules of a symmetric group $\Ss_n$, $n \geq 5$, then $\C$ is not solvable; Theorem \ref{dihedral-fr} and Corollary  \ref{fr-sn}.

\medbreak We study some families of examples of non-solvable fusion categories arising from representations of semisimple Hopf algebras associated to exact factorizations of the symmetric group $\Ss_n$ and the alternating group $\Aa_n$.  For a wide class of such fusion categories $\C$, we show that $\C$ cannot have the fusion rules of any solvable fusion category. See Theorems \ref{jp-kp}, \ref{dual-jn-kn}, \ref{bn*} and \ref{bn}.

\medbreak In the context of braided fusion categories, the solvability of a fusion category $\C$ is related to the existence of Tannakian subcategories of $\C$; it is known that if $\C$  is a non-pointed integral solvable braided fusion category, then it must contain a nontrivial Tannakian subcategory \cite[Lemma 5.1]{witt-wgt}.

We show that if $\tilde \C$ is a non-pointed braided fusion category which has the same  fusion rules as a solvable fusion category $\C$, then $\tilde \C$ contains a nontrivial Tannakian subcategory. See Theorem \ref{e-tann}.

\medbreak For a spherical fusion category $\C$ we study a somehow stronger invariant, analogous to the character table of a finite group, consisting of the $S$-matrix of the Drinfeld center $\Z(\C)$ of $\C$.  Indeed, the $S$-matrix of a modular category $\D$ is usually named the 'character table' of $\D$ in the literature; see for instance \cite{ganchev}.  A celebrated formula due to Verlinde, and valid for any modular category, implies that the $S$-matrix of $\Z(\C)$ determines its fusion rules. We call two spherical fusion categories $S$-equivalent if their Drinfeld centers have 'the same' $S$-matrix; see Subsection \ref{s-equiv}.

We prove in Theorem \ref{s-equiv-gt} that the $S$-matrix of the Drinfeld center does determine the solvability of  a group-theoretical fusion category. That is, if $\C$ and $\D$ are $S$-equivalent spherical fusion categories and $\C$ is group-theoretical, then $\C$ is solvable if and only if $\D$ is solvable.
We also show  that being group-theoretical is a property invariant under $S$-equivalence, that is, it is a property determined by the $S$-matrix of the Drinfeld center; see Theorem \ref{s-gpttic}.

\medbreak The paper is organized as follows. Section \ref{preli} contains the main notions and facts on fusion categories that will be needed in the rest of the paper. In Section \ref{g-crossed} we study the notion of Grothendieck equivalence of fusion categories and its connection with solvability, and prove some results on the fusion rules of dihedral and symmetric groups. 
In Section \ref{nsol-fr} we consider examples of non-solvable fusion categories arising from exact factorizations of the symmetric and alternating groups. The case of braided fusion categories is studied in Section \ref{solv-fr-bfc}.  Finally, in Section \ref{s-char-tbl} we study the notion of $S$-equivalence of spherical fusion categories. 

\section{Preliminaries}\label{preli}

The category of finite dimensional vector spaces
over $k$ will be denoted by $\vect$. A fusion category over $k$
is a semisimple rigid monoidal category over $k$ with finitely many isomorphism
classes of simple objects, finite-dimensional Hom spaces, and such that the unit
object $\uno$ is simple.
Unless otherwise stated, all tensor categories will be assumed to be strict.
We refer the reader to \cite{ENO}, \cite{DGNOI}, for the main notions on fusion  categories used throughout.

\subsection{Fusion categories} Let $\C$ be a fusion category over $k$. The Grothendieck group $K_0(\C)$ is a free abelian group with basis $\Irr(\C)$ consisting of isomorphism classes of simple objects of $\C$. For an object $X$ of $\C$, let us denote by $[X]$ its class in $K_0(\C)$.

The tensor product of $\C$ endows $K_0(\C)$ with a ring structure with unit element $[\uno]$ and such that, for all objects $X$ and $Y$ of $\C$, $[X][Y] = [X\otimes Y]$. Let $X, Y \in \Irr(\C)$. Then one can write
$$X Y = \sum_{Z \in \Irr(\C)}N^Z_{X, Y} \, Z,$$
where $N^Z_{X, Y}$ are non-negative integers, for all $X, Y, Z \in \Irr(\C)$. The collection of numbers $\{N^Z_{X, Y}\}_{X, Y, Z}$ are called the \emph{fusion rules} of $\C$ and they determine the ring structure of $K_0(\C)$. They are given by the formula $$N^Z_{X, Y} = \dim \Hom_\C(Z, X \otimes Y),$$ for all $X, Y, Z \in \Irr(\C)$.
In the terminology of \cite[Subsection 2.1]{gel-nik}, the pair $(K_0(\C),\Irr(\C))$ is a unital \emph{based} ring.

A \emph{fusion subcategory} of $\C$ is  a full tensor subcategory $\D$ such that $\D$ is replete and  stable under direct summands. Fusion subcategories of $\C$ are in bijective correspondence with subrings of $K_0(\C)$ spanned by a subset of $\Irr(\C)$, that is, based subrings of $K_0(\C)$.

\medbreak The Frobenius-Perron dimension of a
simple object $X \in \C$ is, by definition, the Frobenius-Perron eigenvalue of
the matrix of left multiplication by the class of $X$ in the basis $\Irr(\C)$ of
the Grothendieck ring of $\C$ consisting of isomorphism classes of simple
objects. The Frobenius-Perron dimension of $\C$ is the number $\FPdim \C =
\sum_{X \in \Irr(\C)} (\FPdim X)^2$.

We shall indicate by $\cd(\C)$ the set of Frobenius-Perron dimensions of simple objects of $\C$. If $1 = d_0, d_1, \dots, d_r$ are distinct positive real numbers and $n_1, \dots, n_r$ are natural numbers, we shall say that $\C$ is of \emph{type} $(d_0, n_0; d_1, n_1; \dots; d_r, n_r)$ if $\C$ has $n_i$ isomorphism classes of simple objects of Frobenius-Perron dimension $d_i$, for all $i = 0, \dots, r$.

The group of invertible objects of $\C$ will be denoted by $G(\C)$. Thus $G(\C)$ coincides with the subset of elements $Y$ of $\Irr(\C)$ such that $\FPdim Y = 1$. Thus, if $\C$ is of type $(1, n_0; d_1, n_1; \dots; d_r, n_r)$, then  $n_0 = |G(\C)|$.

\medbreak The category $\C$ is called \emph{integral}  if
$\FPdim X \in \mathbb Z$, for all simple object $X \in \C$, and it is called
\emph{weakly integral} if $\FPdim \C \in \mathbb Z$.

\medbreak
Recall that a right module category over a fusion category $\C$ is a finite semisimple $k$-linear abelian category $\M$ endowed with a bifunctor $\otimes: \M \times \C \to \M$ satisfying the associativity and unit axioms for an action, up to coherent natural isomorphisms.  The module category $\M$ is called indecomposable
if it is not equivalent as a module category to a direct sum of non-trivial module categories. If $\M$ is an indecomposable module category over $\C$, then the category $\C^*_{\mathcal M}$ of $\C$-module endofunctors of $\M$ is also a fusion category.

Two fusion categories $\C$ and $\D$
are \emph{Morita equivalent} if $\D$ is equivalent to
$\C^*_{\mathcal M}$ for some indecomposable module
category $\mathcal M$. If $\C$ and $\D$ are Morita equivalent fusion categories, then $\FPdim \C = \FPdim \D$. 

By \cite[Theorem 3.1]{ENO2}, the fusion categories $\C$ and $\D$ are Morita equivalent if and only if its Drinfeld centers are equivalent as braided fusion categories.

\medbreak A fusion category $\C$ is \emph{pointed} if all its simple objects are invertible. If $\C$ is a pointed fusion category, then there exist a finite group $G$ and a 3-cocycle $\omega$ on $G$ such that $\C$ is equivalent to the category $\C(G, \omega)$ of finite-dimensional $G$-graded vector spaces with associativity constraint defined by $\omega$.  
A fusion category Morita equivalent to a pointed fusion category is called \emph{group-theoretical}. 

\subsection{Nilpotent and solvable fusion categories}\label{nilp-solv} Let $G$ be
a finite group. A $G$-grading on a fusion category $\C$ is a decomposition $\C =
\oplus_{g\in G} \C_g$, such that $\C_g \otimes \C_h \subseteq \C_{gh}$ and
$\C_g^* \subseteq \C_{g^{-1}}$, for all $g, h \in G$. A $G$-grading is \emph{faithful} if $\C_g \neq 0$, for all $g \in G$.
The fusion category $\C$ is called a \emph{$G$-extension} of a
fusion category $\D$ if there is a faithful grading $\C = \oplus_{g\in G} \C_g$
with neutral component $\C_e \cong \D$.

If $\C$ is any fusion category, there exist a finite group $U(\C)$, called the
\emph{universal grading group} of $\C$, and a canonical faithful grading $\C =
\oplus_{g \in U(\C)}\C_g$, with neutral component $\C_e = \C_{ad}$, where
$\C_{ad}$ is the \emph{adjoint} subcategory of $\C$, that is, the fusion subcategory generated by $X\otimes
X^*$, $X \in \Irr(\C)$.

In fact, $K_0(\C)_{ad} = K_0(\C_{ad})$ is a based subring of $K_0(\C)$ and $K_0(\C)$ decomposes into a direct sum of indecomposable based $K_0(\C)_{ad}$-bimodules $K_0(\C)=\displaystyle{\oplus_{g\in U(\C)}}K_0(\C)_g$, with $K_0(\C)_e=K_0(\C)_{ad}$. Then the group structure on $U(\C) := U(K_0(\C))$ is defined by the following property: $gh = t$ if and only if
 $X_g X_h\in K_0(\C)_t$, for all $X_g\in K_0(\C)_g$, $X_h\in K_0(\C)_h$, $g,h,t\in U(\C)$; see  \cite[Theorem 3.5]{gel-nik}.

\medbreak
A fusion category $\C$ is (cyclically) \emph{nilpotent} if there exists a
sequence of fusion categories $\vect = \C_0 \subseteq \C_1 \dots \subseteq \C_n
= \C$, and finite
(cyclic) groups $G_1, \dots, G_n$, such that for all $i = 1, \dots, n$, $\C_i$
is a $G_i$-extension of $\C_{i-1}$.

On the other side, $\C$ is \emph{solvable} if it is Morita equivalent to a cyclically nilpotent fusion category, that is, if there exists a cyclically nilpotent fusion category $\D$ and an idecomposable right module category $\M$ over $\D$ such that $\C$ is equivalent to the fusion category $\D^*_\M$ of $\D$-linear endofunctors of $\M$.

\medbreak Consider  an action of a finite group $G$ on a fusion
category $\C$ by tensor autoequivalences $\rho: \underline
G \to \underline \Aut_{\otimes} \, \C$.
The \emph{equivariantization} of $\C$ with respect to the action $\rho$, denoted
$\C^G$, is a fusion category whose objects are pairs  $(X, \mu)$, such that $X$
is an object of $\C$ and $\mu = (\mu^g)_{g \in G}$, is a collection of
isomorphisms $\mu^g:\rho^gX \to X$, $g \in G$, satisfying appropriate
compatibility conditions.

The forgetful functor $F: \C^G \to \C$, $F(X, \mu) = X$,
is a dominant tensor functor that gives rise to a central exact sequence of
fusion categories $\Rep G \to \C^G \to \C$ \cite{indp-exact}, where $\Rep G$ is
the category of finite-dimensional representations of $G$.

The category $\C^G$ is integral (respectively, weakly integral) if and only if so is $\C$. See \cite[Proposition 4.9]{tensor-exact}, \cite[Proposition 2.12]{indp-exact}.

\medbreak According to \cite[Definition 1.2]{ENO2}, a fusion category $\C$ is solvable if and only if there exists a sequence of fusion categories $\vect = \C_0,
\dots, \C_n = \C$, $n \geq 0$, and a sequence of cyclic groups of prime order
$G_1, \dots, G_n$, such that, for all $1\leq i \leq n$, $\C_i$ is a
$G_i$-equivariantization or a $G_i$-extension of $\C_{i-1}$.

It is shown in \cite[Proposition 4.1]{ENO2} that the class of  solvable fusion
categories is stable under taking extensions
and equivariantizations
by solvable groups, Morita equivalent categories, tensor products, Drinfeld
center, fusion subcategories and components of quotient categories.

In view of \cite[Proposition 4.5 (iv)]{ENO2}, every nontrivial solvable fusion category has nontrivial invertible objects.

\medbreak Suppose that the finite group $G$ acts on the fusion category $\C$ by tensor autoequivalences. Let $Y \in \Irr \C$. The stabilizer of $Y$ is the subgroup  $G_Y = \{g\in G:\,  \rho^g(Y) \cong  Y \}$. Let $\alpha_Y : G_Y \times G_Y \to k^*$ be the 2-cocycle defined by the relation
\begin{equation}\label{alfa} \alpha_Y(g, h)^{-1} \id_Y = c^g \rho^g(c^h)({\rho^{g, h}_{2_Y}})^{-1}(c^{gh})^{-1}: Y \to Y, \end{equation}
where, for all $g \in G_Y$, $c^g: \rho^g(Y) \to Y$ is a fixed isomorphism \cite[Subsection 2.3]{fusionrules-equiv}.

\medbreak Then the simple objects of $\C^G$ are parameterized by pairs $(Y, U)$, where $Y$ runs over the $G$-orbits on $\Irr(\C)$ and $U$ is an equivalence class of an  irreducible $\alpha_Y$-projective representation of $G_Y$. We shall use the notation $S_{Y, U}$ to indicate the isomorphism class of the simple object corresponding to the pair $(Y, U)$. The dimension of $S_{Y, U}$ is given by the formula
\begin{equation}\label{dim-equiv}\FPdim S_{Y, U} =  [G:G_Y] \dim U \FPdim Y.\end{equation}

\begin{lemma}\label{simple-p} Let $p$ be a prime number. Suppose that the group $\Zz_p$ acts on a fusion category $\C$ by tensor autoequivalences. Assume in addition that $G(\C^{\Zz_p})$ is of order $p$ and $G(\C) \neq \{\uno\}$. Then $\C^{\Zz_p}$ has a simple object of Frobenius-Perron dimension $p$.
\end{lemma}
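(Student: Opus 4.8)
The plan is to analyze the simple objects of $\C^{\Zz_p}$ directly through their parameterization by pairs $(Y, U)$ together with the dimension formula \eqref{dim-equiv}. Since $\Zz_p$ is cyclic, its only subgroups are $\{e\}$ and $\Zz_p$, so every $\Zz_p$-orbit on $\Irr(\C)$ has size $1$ or $p$, with stabilizer $G_Y = \Zz_p$ when $Y$ is fixed and $G_Y = \{e\}$ otherwise. Moreover, because the Schur multiplier $H^2(\Zz_p, k^*)$ vanishes, every cocycle $\alpha_Y$ is a coboundary, so the irreducible $\alpha_Y$-projective representations of $G_Y$ are just the ordinary irreducible representations: when $G_Y = \Zz_p$ there are exactly $p$ of them, all of dimension $1$, and when $G_Y = \{e\}$ there is only the trivial one.

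First I would identify the invertible objects of $\C^{\Zz_p}$. By \eqref{dim-equiv}, a simple object $S_{Y,U}$ coming from a free orbit (size $p$) satisfies $\FPdim S_{Y,U} = p\,\FPdim Y \geq p$, hence is never invertible; a simple object coming from a fixed $Y$ satisfies $\FPdim S_{Y,U} = \FPdim Y$, which equals $1$ precisely when $Y \in G(\C)$. Thus the invertible objects of $\C^{\Zz_p}$ are exactly the $S_{Y, U}$ with $Y$ a $\Zz_p$-fixed invertible object of $\C$ and $U$ one of the $p$ characters of $\Zz_p$. Counting then gives $|G(\C^{\Zz_p})| = p \, |G(\C)^{\Zz_p}|$, where $G(\C)^{\Zz_p}$ denotes the subgroup of invertible objects fixed by the action.

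Next I would feed in the hypotheses. Since $|G(\C^{\Zz_p})| = p$, the identity above forces $|G(\C)^{\Zz_p}| = 1$, that is, $\uno$ is the only invertible object of $\C$ fixed by $\Zz_p$. On the other hand $G(\C) \neq \{\uno\}$ supplies a nontrivial invertible object $Y$, which by the previous sentence cannot be fixed; its orbit therefore has size $p$ and $G_Y = \{e\}$. Applying \eqref{dim-equiv} to this $Y$, with $\FPdim Y = 1$ and $U$ the trivial representation, yields a simple object of $\C^{\Zz_p}$ of Frobenius-Perron dimension $p \cdot 1 \cdot 1 = p$, as desired.

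There is no serious obstacle here; the argument is essentially a bookkeeping of orbits and dimensions. The one point that must be handled with care is the vanishing of $H^2(\Zz_p, k^*)$, which guarantees that all the projective irreducibles entering the count are one-dimensional, and hence that the relation $|G(\C^{\Zz_p})| = p\,|G(\C)^{\Zz_p}|$ holds on the nose; this is exactly what converts the numerical hypothesis $|G(\C^{\Zz_p})| = p$ into the structural statement that the action fixes no nontrivial invertible object.
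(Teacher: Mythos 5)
Your proof is correct and follows essentially the same route as the paper: both use the parameterization of simple objects of $\C^{\Zz_p}$ by pairs $(Y,U)$, the dimension formula \eqref{dim-equiv}, and the vanishing of $H^2$ for cyclic groups to show that the invertible objects of $\C^{\Zz_p}$ all arise from $\Zz_p$-fixed invertible objects of $\C$, whence the hypothesis $|G(\C^{\Zz_p})|=p$ forces any nontrivial $Y \in G(\C)$ to have trivial stabilizer and produce a simple object of dimension $p$. Your explicit counting identity $|G(\C^{\Zz_p})| = p\,|G(\C)^{\Zz_p}|$ is just a slightly more systematic phrasing of the paper's observation that the objects $S_{\uno,U_i}$ already exhaust $G(\C^{\Zz_p})$.
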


\begin{proof} Let $Y$ be an invertible object of $\C$ and let $U$ be an irreducible $\alpha_Y$-projective representation of the subgroup $G_Y \subseteq \Zz_p$. Since $G_Y$ is cyclic, then $\alpha_Y = 1$ in $H^2(G_Y, k^*)$ and $\dim U = 1$.  Then the Frobenius-Perron dimension of the simple object $S_{Y, U}$ is given by $\FPdim S_{Y, U} =  [G:G_Y] \FPdim Y = [G:G_Y]$. Moreover, if $Y = \uno$, then $G_Y = \Zz_p$. Therefore, letting $U_0 = \epsilon, U_1, \dots, U_{p-1}$ the non-isomorphic representations of $\Zz_p$, we get that $\uno = S_{\uno, U_0}, S_{\uno, U_1}, \dots, S_{\uno, U_{p-1}}$ are  all the non-isomorphic invertible objects of $\C^{\Zz_p}$. Hence for all invertible object $Y \neq \uno$ of $\C$, we must have $[G: G_Y] = p$ and the simple object $S_{Y, U}$ has Frobenius-Perron dimension $p$. This proves the lemma.
\end{proof}

\begin{proposition}\label{cyc-nilp} Let $p$ be a prime number. Suppose that $\C$ is a solvable fusion category such that $G(\C) \cong \Zz_p$ and $\C$ has no simple objects of Frobenius-Perron dimension $p$. Then $\C$ is cyclically nilpotent.
\end{proposition}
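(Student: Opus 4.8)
The plan is to argue by induction on the length $n$ of a solvable series $\vect = \C_0, \dots, \C_n = \C$ realizing the solvability of $\C$, as in \cite[Definition 1.2]{ENO2}, where each $\C_i$ is a $G_i$-equivariantization or a $G_i$-extension of $\C_{i-1}$ for a cyclic group $G_i$ of prime order. Since $G(\C) \cong \Zz_p$ is nontrivial we have $\C \neq \vect$, hence $n \geq 1$, and everything is decided by the last step $\C_{n-1} \to \C_n = \C$. So I would set $\D := \C_{n-1}$, a solvable fusion category admitting a solvable series of length $n - 1$, and split into the two possibilities for this step.

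First I would treat the case where $\C$ is a $\Zz_q$-equivariantization $\C = \D^{\Zz_q}$. The central exact sequence $\Rep \Zz_q \to \D^{\Zz_q} \to \D$ exhibits $\Rep \Zz_q$ as a fusion subcategory of $\C$ all of whose simple objects are invertible, hence lie in $G(\C)$; thus $\Zz_q \hookrightarrow G(\C) \cong \Zz_p$, and since $q$ is prime this forces $q = p$. Now I would apply Lemma \ref{simple-p} with the roles of its $\C$ and $\C^{\Zz_p}$ played by $\D$ and $\C = \D^{\Zz_p}$: since $G(\C)$ has order $p$ and $\C$ has no simple object of Frobenius-Perron dimension $p$, the contrapositive of the lemma yields $G(\D) = \{\uno\}$. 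As $\D$ is solvable, \cite[Proposition 4.5 (iv)]{ENO2} then forces $\D = \vect$, whence $\C = \vect^{\Zz_p} = \Rep \Zz_p$, a pointed category which is a $\Zz_p$-extension of $\vect$ and therefore cyclically nilpotent.

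It remains to treat the case where $\C$ is a $\Zz_q$-extension of $\D = \C_{n-1}$, so that $\D = \C_e$ is a fusion subcategory of $\C$. Assigning to each invertible object its degree defines a group homomorphism $G(\C) \to \Zz_q$ with kernel $G(\D)$; since $G(\C) \cong \Zz_p$ and $\Zz_q$ has prime order, the image is either trivial, giving $G(\D) \cong \Zz_p$, or all of $\Zz_q$, which forces $p = q$ and $G(\D) = \{\uno\}$. In the latter situation $\D$ is a solvable category with no nontrivial invertible objects, hence $\D = \vect$ by \cite[Proposition 4.5 (iv)]{ENO2}, so $\C$ is a $\Zz_p$-extension of $\vect$ and thus cyclically nilpotent. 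In the former situation $\D$ is solvable, $G(\D) \cong \Zz_p$, and $\D$ has no simple object of Frobenius-Perron dimension $p$ because its simple objects are simple objects of $\C$; the induction hypothesis then gives that $\D$ is cyclically nilpotent, and a $\Zz_q$-extension of a cyclically nilpotent category is cyclically nilpotent, completing the induction.

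I expect the main obstacle to be the equivariantization case, and specifically the bookkeeping needed to match the hypotheses of Lemma \ref{simple-p}: one must pin down $q = p$ from the embedding $\Rep \Zz_q \hookrightarrow \C$ and confirm that the group of invertibles of the equivariantization $\C = \D^{\Zz_p}$ has order exactly $p$, after which the lemma converts the absence of a simple object of dimension $p$ into the vanishing of $G(\D)$. The extension case is then routine, the only care being the analysis of the degree homomorphism $G(\C) \to \Zz_q$.
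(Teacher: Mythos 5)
Your proof is correct and follows essentially the same route as the paper's: the same dichotomy between $\C$ being a $\Zz_q$-equivariantization or a $\Zz_q$-extension in the last step of the solvable series, with Lemma \ref{simple-p} eliminating (or trivializing) the equivariantization case and induction handling the extension case. The only differences are cosmetic: you induct on the length of the solvable series rather than on $\FPdim \C$, use the contrapositive of Lemma \ref{simple-p} instead of deriving a contradiction, and resolve the degenerate case $\D = \vect$ explicitly where the paper absorbs it into the hypothesis $\FPdim \C > p$.
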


\begin{proof} The proof is by induction on $\FPdim \C \geq p$. If $\FPdim \C = p$ there is nothing to prove. Suppose $\FPdim \C > p$. Since $\C$ is solvable then, for some prime number $q$, $\C$ must be a $\Zz_q$-extension or a $\Zz_q$-equivariantization of a fusion category $\D$. If the second possibility holds, then the assumption that $G(\C) \cong \Zz_p$ implies that $q = p$. Moreover, since $\D$ is also solvable, then $\D_{pt} \neq \vect$. By Lemma \ref{simple-p}, $\C$ must have a simple object of dimension $p$, which contradicts the assumption.

Therefore $\C$ must be a $\Zz_q$-extension of a fusion subcategory $\D$. In particular, $\D$ cannot have simple objects of dimension $p$ and since $\D$ is solvable, then $\D_{pt} \neq \vect$, whence $G(\D) = G(\C) \cong \Zz_p$. By induction, $\D$ and then also $\C$, is cyclically nilpotent. This finishes the proof of the proposition.
\end{proof}

\begin{lemma}\label{g-central} Let $\C$ be a fusion category and let $G$ be a
finite group acting
on $\C$ by tensor autoequivalences.
Then the forgetful functor $U: \Z(\C^G) \to \C^G$ induces an injective ring
homomorphism $K_0(G) \to Z(K_0(\C^G))$. In
particular,
the group $\widehat G$ is isomorphic to a subgroup of the center of $G(\C^G)$.
\end{lemma}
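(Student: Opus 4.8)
The plan is to exploit the central exact sequence of fusion categories $\Rep G \to \C^G \to \C$ attached to the equivariantization and recalled above. The centrality of this sequence means precisely that the fully faithful inclusion $\iota\colon \Rep G \to \C^G$ admits a factorization $\iota = U \circ \tilde\iota$, where $\tilde\iota\colon \Rep G \to \Z(\C^G)$ is a braided tensor functor into the Drinfeld center and $U\colon \Z(\C^G) \to \C^G$ is the forgetful functor. I would take this factorization as the point of departure, since it realizes $K_0(G) = K_0(\Rep G)$ inside $\Z(\C^G)$ as demanded by the statement.

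The first step is to note that every object in the image of $U$ yields a central element of the Grothendieck ring $K_0(\C^G)$. Indeed, if $(Z, \gamma) \in \Z(\C^G)$ is an object of the center with half-braiding $\gamma = \{\gamma_X\colon Z \otimes X \xrightarrow{\sim} X \otimes Z\}$, then each component $\gamma_X$ is an isomorphism, so that $[U(Z)]\,[X] = [Z \otimes X] = [X \otimes Z] = [X]\,[U(Z)]$ in $K_0(\C^G)$ for every object $X$ of $\C^G$. Consequently the ring homomorphism $U_*\colon K_0(\Z(\C^G)) \to K_0(\C^G)$ induced by the (braided) tensor functor $U$ has image contained in the center $Z(K_0(\C^G))$. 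Composing $U_*$ with the ring homomorphism $\tilde\iota_*\colon K_0(G) \to K_0(\Z(\C^G))$ induced by $\tilde\iota$ produces a ring homomorphism $K_0(G) \to Z(K_0(\C^G))$, which coincides with the map $\iota_*$ induced by the embedding $\iota = U \circ \tilde\iota$. For injectivity I would use that $\iota$ is the inclusion of a fusion subcategory, hence fully faithful: it carries non-isomorphic simple objects of $\Rep G$ to non-isomorphic simple objects of $\C^G$, and therefore maps the canonical basis $\Irr(\Rep G)$ of $K_0(G)$ bijectively onto a subset of the basis $\Irr(\C^G)$ of $K_0(\C^G)$. Thus $\iota_*$ is injective, proving the first assertion.

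For the assertion on invertible objects, I would simply restrict $\iota_*$ to groups of units. The invertible objects of $\Rep G$ constitute the group $\widehat G = \Hom(G, k^*)$ of one-dimensional characters of $G$, that is, $\widehat G = G(\Rep G)$. A tensor functor sends invertible objects to invertible objects, so $\iota$ restricts to an injective group homomorphism $\widehat G \to G(\C^G)$. Since its image lies in $Z(K_0(\C^G))$, each $\iota(\chi)$ commutes in the Grothendieck ring with every invertible object of $\C^G$; in particular $\iota(\widehat G)$ is contained in the center $Z(G(\C^G))$ of the group $G(\C^G)$. Hence $\widehat G$ is isomorphic to a subgroup of $Z(G(\C^G))$, as claimed. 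I expect the only genuinely substantive point to be the factorization through $\Z(\C^G)$, i.e.\ the centrality of the exact sequence; once that is in hand, the remaining steps---commutativity in $K_0$ produced by half-braidings, and injectivity of a fully faithful embedding on Grothendieck rings---are routine.
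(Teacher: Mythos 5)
Your proof is correct and follows essentially the same route as the paper: the paper invokes \cite[Proposition 2.10]{ENO2} to obtain a Tannakian subcategory $\E \cong \Rep G$ of $\Z(\C^G)$ that embeds into $\C^G$ under the forgetful functor, which is exactly the factorization $\iota = U \circ \tilde\iota$ through $\Z(\C^G)$ that you extract from the centrality of the exact sequence $\Rep G \to \C^G \to \C$ of \cite{indp-exact}. The steps you then spell out --- half-braidings forcing centrality of the image in $K_0(\C^G)$, full faithfulness giving injectivity on Grothendieck rings, and restriction to invertible objects --- are precisely what the paper compresses into the phrase ``as a consequence we obtain the lemma.''
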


\begin{proof}
By \cite[Proposition 2.10]{ENO2}, the Drinfeld center $\Z(\C)$
contains a Tannakian subcategory $\E \cong \Rep G$ such that $\E$ embeds into
$\C$ under the forgetful functor $U: \Z(\C) \to \C$.
As a consequence we obtain the lemma.
\end{proof}

\subsection{Braided fusion categories} A braided fusion category is a fusion category $\C$ endowed with a braiding, that is, a
 natural isomorphism $c_{X,Y} : X \otimes Y \rightarrow Y \otimes X$, $X, Y \in \C$, subject to the
so-called hexagon axioms.

\medbreak If $\D$ is a fusion subcategory of a braided fusion category $\C$,
the \emph{M\" uger centralizer} of $\D$ in $\C$ will be denoted by $\D'$. Thus $\D'$ is the full fusion
subcategory generated by all objects $X \in \C$ such that $c_{Y, X}c_{X, Y} =
\id_{X \otimes Y}$, for all objects $Y \in \D$.

\medbreak The centralizer $\C'$ of $\C$ is called the \emph{M\" uger (or symmetric) center} of $\C$.
The category $\C$ is called \emph{symmetric} if $\C' = \C$. If $\C$ is any
braided fusion category, its M\" uger center $\C'$ is a symmetric fusion
subcategory of $\C$. The category  $\C$ is  called
\emph{non-degenerate} (respectively, \emph{slightly degenerate})
if $\C' \cong \vect$ (respectively, if $\C' \cong \svect$, where $\svect$ denotes the category of super-vector spaces).

\medbreak For a fusion category $\C$, the Drinfeld center of
$\C$ will be
denoted $\Z(\C)$. It is known that $\Z(\C)$ is a braided non-degenerate fusion
category
of Frobenius-Perron dimension $\FPdim \Z(\C) = (\FPdim \C)^2$.

\medbreak
Let $G$ be a finite group. The fusion category $\Rep G$ of
finite dimensional representations of $G$ is a symmetric fusion category with respect to the canonical braiding.
A braided fusion category $\E$ is called Tannakian, if $\E \cong \Rep G$ for
some finite group $G$ as braided fusion categories. 

Every symmetric fusion category is equivalent, as a braided fusion category, to the category $\Rep(G, u)$ of representations of a finite group $G$ on finite-dimensional super-vector spaces, where $u \in G$ is a central element of order 2 which acts as the parity operator \cite{deligne}. In particular, if $\C$ is symmetric, then it is equivalent to the category of representations of a finite group as a fusion category.

\medbreak  Let $G$ be a finite group. A \emph{$G$-crossed braided fusion
category} is a fusion category $\D$ endowed with a $G$-grading $\D
= \oplus_{g \in G}\D_g$ and an action of $G$ by tensor autoequivalences
$\rho:\underline G \to \underline \Aut_{\otimes} \, \D$, such that $\rho^g(\D_h)
\subseteq
\D_{ghg^{-1}}$, for all $g, h \in G$, and a $G$-braiding $c: X \otimes Y \to
\rho^g(Y) \otimes X$, $g \in G$, $X \in \D_g$, $Y \in \D$, subject to
compatibility conditions. The $G$-braiding $c$ restricts to a braiding in the neutral component $\D_e$.

If $\D$ is a $G$-crossed braided fusion category, then
the equivariantization $\D^G$ under the action of $G$ is a braided fusion category containing $\Rep G$ as a Tannakian subcategory. Furthermore, the group $G$ acts by restriction on
$\D_e$ by braided tensor autoequivalences. The equivariantization $\D_e^G$ coincides with the centralizer $\E'$ of the
Tannakian subcategory $\E$ in $\D^G$. See  \cite{mueger-crossed}.

\medbreak Let $\E$ be  Tannakian subcategory of a braided fusion category $\C$ and let $G$ be a finite group such that $\E
\cong \Rep G$ as symmetric categories. Let also $A \in \C$ be the algebra corresponding to the algebra $k^G \in \Rep G$ of functions on $G$ with the regular action.
The de-equivariantization $\C_G$ of
$\C$ with respect to $\Rep G$ is the fusion category $\C_A$ of right $A$-modules in $\C$. This is a $G$-crossed braided fusion category such that $\C \cong (\C_G)^G$. The  neutral component of $\C_G$ with respect to the associated $G$-grading, denoted by $\C_G^0$, coincides with the de-equivariantization ${\E'}_G$ of the centralizer of $\E$ by the group $G$.

It was shown in \cite[Proposition ]{witt-wgt} that if  $\E \cong \Rep G
\subseteq  \C$ is a Tannakian subcategory, then $\C$ is
weakly integral
(respectively, integral or weakly group-theoretical) if and only if
$\C^0_G$ is weakly integral  (respectively,
integral, weakly group-theoretical). In addition,  $\C$ is solvable if and only
if $\C^0_G$ is solvable and $G$ is solvable.

\begin{lemma} Let $\C$ be a braided fusion category. Then the subcategory
$\C_{ad} \cap \C_{pt}$ is symmetric.
\end{lemma}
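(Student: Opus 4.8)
The plan is to reduce the statement to a claim about invertible objects and then establish that claim through the monodromy pairing. First I note that $\C_{ad} \cap \C_{pt}$ is a fusion subcategory of $\C_{pt}$, hence pointed, so it is generated by the group $H$ of invertible objects of $\C$ lying in $\C_{ad}$. A pointed braided fusion category is symmetric precisely when any two of its invertible objects centralize each other, so it suffices to show that $c_{h,g}c_{g,h} = \id$ for all $g, h \in H$. I would obtain this from the stronger assertion that $\C_{pt} \subseteq (\C_{ad})'$, i.e. that every invertible object of $\C$ centralizes the whole adjoint subcategory. Granting this, since $\C_{ad}\cap\C_{pt} \subseteq \C_{ad}$ the inclusion-reversing property of the M\"uger centralizer gives $(\C_{ad})' \subseteq (\C_{ad}\cap\C_{pt})'$, and therefore
\[ \C_{ad}\cap\C_{pt} \subseteq \C_{pt} \subseteq (\C_{ad})' \subseteq (\C_{ad}\cap\C_{pt})', \]
which is exactly the statement that $\C_{ad}\cap\C_{pt}$ is symmetric.

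To prove that every invertible object centralizes $\C_{ad}$, I fix $g \in \C_{pt}$ invertible and, for each simple object $X$, consider the double braiding $c_{g,X}c_{X,g} \colon X\otimes g \to X\otimes g$. Since $g$ is invertible, $X \otimes g$ is simple, so this endomorphism is a scalar $\mu_g(X) \in k^*$. The hexagon axioms show that $X \mapsto \mu_g(X)$ is multiplicative along fusion: if $Z$ is a simple summand of $X\otimes Y$, then $\mu_g(Z) = \mu_g(X)\mu_g(Y)$; in particular $\mu_g(\uno) = 1$. Applying this to the summand $\uno \subseteq X\otimes X^*$ yields $\mu_g(X)\mu_g(X^*) = 1$, hence $\mu_g(Z) = 1$ for every simple summand $Z$ of $X\otimes X^*$. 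As $\C_{ad}$ is generated by the objects $X\otimes X^*$ with $X \in \Irr(\C)$ and $\mu_g$ is multiplicative, $\mu_g$ is identically $1$ on $\Irr(\C_{ad})$. This says precisely that the monodromy of $g$ with every simple object of $\C_{ad}$ is trivial, so $g$ centralizes $\C_{ad}$; as $g$ was an arbitrary invertible object, $\C_{pt}\subseteq(\C_{ad})'$.

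The main point to handle carefully is the multiplicativity of $\mu_g$ — that the assignment $X \mapsto \mu_g(X)$ defines a character on $\Irr(\C)$, so that triviality on the generators $X\otimes X^*$ propagates to all of $\C_{ad}$. This is the standard fact that the double braiding with a fixed object is a monoidal natural automorphism of the identity functor; the only thing to verify is that, because $g$ is invertible, this natural transformation is scalar-valued on simple objects and so collapses to an honest multiplicative function, the triviality statement $\mu_g(Z)=\mu_g(X)\mu_g(Y)$ for $Z \subseteq X\otimes Y$ then following from naturality along the inclusion $Z \hookrightarrow X\otimes Y$. Everything else is a formal consequence of the hexagon identities and the inclusion-reversing behavior of the centralizer, so I do not expect a genuine obstacle beyond this bookkeeping.
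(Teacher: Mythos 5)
Your proof is correct, but it follows a genuinely different route from the paper's. The paper argues in two steps: for non-degenerate $\C$ it invokes \cite[Corollary 3.27]{DGNOI} to get $\C_{ad} = \C_{pt}'$, so that $\C_{ad} \cap \C_{pt} = \C_{pt}' \cap \C_{pt}$ is the M\"uger center of $\C_{pt}$, hence symmetric; for general $\C$ it embeds $\C$ into the non-degenerate category $\Z(\C)$ via the braiding, notes $\C_{ad} \subseteq \Z(\C)_{ad}$ and $\C_{pt} \subseteq \Z(\C)_{pt}$, and concludes since a fusion subcategory of a symmetric category is symmetric. You instead prove directly, in an arbitrary braided fusion category, the inclusion $\C_{pt} \subseteq (\C_{ad})'$ by a monodromy-character argument: for invertible $g$ and simple $X$ the double braiding $c_{g,X}c_{X,g}$ is a scalar $\mu_g(X)$ on the simple object $X \otimes g$; the hexagon identities plus naturality make $\mu_g$ multiplicative on constituents of tensor products; duality forces $\mu_g \equiv 1$ on constituents of $X \otimes X^*$, and induction over iterated products propagates this to all of $\Irr(\C_{ad})$. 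The sandwich $\C_{ad}\cap\C_{pt} \subseteq \C_{pt} \subseteq (\C_{ad})' \subseteq (\C_{ad}\cap\C_{pt})'$ then closes the argument. What your approach buys: it is self-contained (no Drinfeld center, no appeal to the non-degenerate structure theory) and it establishes the stronger general fact $\C_{ad} \subseteq \C_{pt}'$ for every braided fusion category, which is precisely the inclusion underlying the DGNO result the paper quotes. What the paper's approach buys: brevity, by outsourcing the computation to known results and using the center as a formal device to pass from the non-degenerate case to the general one. Your one compressed step --- that triviality of $\mu_g$ on the generators $X \otimes X^*$ propagates to all simples of $\C_{ad}$ --- does require the short induction on iterated tensor products sketched above, but this is exactly the bookkeeping you anticipated and presents no obstacle.
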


\begin{proof} Suppose first that $\C$ is non-degenerate. Then $\C_{ad} =
\C_{pt}'$, by \cite[Corollary 3.27]{DGNOI}. Therefore $\C_{ad} \cap \C_{pt} =
\C_{pt}' \cap \C_{pt}$ is a symmetric subcategory.

Next, for an arbitrary braided fusion category $\C$, let $\Z(\C)$ be the
Drinfeld center of $\C$. Since $\Z(\C)$ is non-degenerate, then the category
$\Z(\C)_{ad} \cap \Z(\C)_{pt}$ is symmetric. The braiding of $\C$ induces a
canonical embedding of braided fusion categories $\C \to \Z(\C)$. We may
therefore identify $\C$ with a fusion subcategory of $\Z(\C)$. Observe that
$\C_{ad} \subseteq \Z(\C)_{ad}$ and  $\C_{pt} \subseteq \Z(\C)_{pt}$. Hence
$\C_{ad} \cap \C_{pt} \subseteq  \Z(\C)_{ad} \cap \Z(\C)_{pt}$, and then
$\C_{ad} \cap \C_{pt}$ is symmetric, as claimed.
\end{proof}

\begin{lemma}\label{cent-cpt} Let $\C$ be a braided fusion category such that
$\C_{ad} = \C$. Then  $\C_{pt}' = \C$.
\end{lemma}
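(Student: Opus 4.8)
The plan is to reduce the statement to the general fact that in any braided fusion category one has $\C_{ad} \subseteq (\C_{pt})'$, equivalently $\C_{pt} \subseteq (\C_{ad})'$ (these are equivalent because the relation ``$X$ centralizes $Y$'', i.e.\ $c_{Y,X}c_{X,Y} = \id_{X\otimes Y}$, is symmetric in $X$ and $Y$). Granting this, the lemma is immediate: the inclusion $(\C_{pt})' \subseteq \C$ holds for any fusion subcategory, while the hypothesis $\C_{ad} = \C$ gives $\C = \C_{ad} \subseteq (\C_{pt})'$, whence $(\C_{pt})' = \C$. So the whole content is the inclusion $\C_{ad} \subseteq (\C_{pt})'$, and I would spend the proof establishing exactly this.

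To prove $\C_{ad}\subseteq (\C_{pt})'$ I would reuse the Drinfeld center device from the preceding lemma. First I would embed $\C$ into its Drinfeld center $\Z(\C)$ via the canonical braided embedding induced by the braiding of $\C$, and identify $\C$ with a fusion subcategory of $\Z(\C)$; as recorded above, this gives $\C_{ad}\subseteq \Z(\C)_{ad}$ and $\C_{pt}\subseteq \Z(\C)_{pt}$. Since $\Z(\C)$ is non-degenerate, \cite[Corollary 3.27]{DGNOI} applies and yields $\Z(\C)_{ad} = (\Z(\C)_{pt})'$; in particular every object of $\Z(\C)_{ad}$ centralizes every object of $\Z(\C)_{pt}$ inside $\Z(\C)$. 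Restricting to the subcategories $\C_{ad}$ and $\C_{pt}$, and using that the embedding $\C \to \Z(\C)$ is braided, so that the double braiding $c_{Y,X}c_{X,Y}$ of two objects of $\C$ is the same whether computed in $\C$ or in $\Z(\C)$, I would conclude that every object of $\C_{ad}$ centralizes every object of $\C_{pt}$, i.e.\ $\C_{ad}\subseteq (\C_{pt})'$, as needed.

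The step I expect to require the most care is the transfer of the centralizing relation between $\C$ and $\Z(\C)$: one must check that centralizing computed in the ambient category $\Z(\C)$ does imply centralizing in $\C$ (which is the weaker condition, involving only objects of $\C$), and that the braided functor $\C \to \Z(\C)$ preserves the braiding morphisms on the nose, so that vanishing of the double braiding transfers back. As an alternative that avoids the center altogether, I could argue directly: for an invertible object $\delta$ and a simple object $Y$ the object $\delta\otimes Y$ is simple, so the double braiding $c_{Y,\delta}c_{\delta,Y}$ is a scalar $\mu(\delta,Y)\in k^*$; the hexagon axioms make $Y\mapsto\mu(\delta,Y)$ multiplicative on fusion, and since $\mu(\delta,\uno)=1$ forces $\mu(\delta,X^*)=\mu(\delta,X)^{-1}$, every simple summand $W$ of a generator $X\otimes X^*$ of $\C_{ad}$ satisfies $\mu(\delta,W)=\mu(\delta,X)\mu(\delta,X^*)=1$. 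Multiplicativity then gives $\mu(\delta,\cdot)\equiv 1$ on all of $\C_{ad}$, so $\delta\in(\C_{ad})'$; as $\delta\in G(\C)$ was arbitrary, $\C_{pt}\subseteq(\C_{ad})'$, and the conclusion follows as before.
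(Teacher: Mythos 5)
Your proof is correct, but it takes a genuinely different route from the paper's. The paper stays inside $\C$ and invokes the projective-centralizer machinery of \cite{DGNOI}: by \cite[Proposition 3.25]{DGNOI}, the projective centralizer of $\B = \C_{pt}$ equals $(\B_{ad})' = \vect' = \C$, and by \cite[Lemma 3.15]{DGNOI} the projective centralizer of a fusion subcategory is a graded extension of its ordinary centralizer; hence $\C$ is a graded extension of $\C_{pt}'$, whose neutral component contains $\C_{ad} = \C$, forcing $\C_{pt}' = \C$. You instead isolate the universally valid inclusion $\C_{ad} \subseteq \C_{pt}'$ (which is also what the paper's graded-extension step ultimately delivers, since the neutral component of any faithful grading contains $\C_{ad}$) and prove it by one of two means: embedding $\C$ into its non-degenerate center and quoting \cite[Corollary 3.27]{DGNOI} --- exactly the device the paper itself uses in the unnumbered lemma immediately preceding this one, so this reads as a natural variant the authors could have written --- or by the elementary monodromy argument, where for invertible $\delta$ the double braiding with a simple $Y$ is a scalar $\mu(\delta,Y)$ that the hexagon axioms make multiplicative on fusion, trivial on every constituent of $X \otimes X^*$, and hence trivial on all of $\C_{ad}$. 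Both routes are sound; the transfer step you flag in the first one works because the canonical embedding $\C \to \Z(\C)$ is braided and fully faithful, so vanishing of the double braiding passes back to $\C$, and the inclusions $\C_{ad} \subseteq \Z(\C)_{ad}$, $\C_{pt} \subseteq \Z(\C)_{pt}$ hold since simples and invertibles are preserved. What each approach buys: the paper's argument is the shortest given the DGNOI toolkit on projective centralizers; your first argument reuses a trick already present in the paper and needs only the non-degenerate case of the $\C_{ad}$--$\C_{pt}$ duality; your second is completely self-contained, avoiding the cited structural results altogether at the cost of a slightly longer computation.
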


\begin{proof}Let $\B \subseteq \C$ be any fusion subcategory. By
\cite[Proposition 3.25]{DGNOI} we have $(\B_{ad})' = (\B')^{co} = \A$, where $\A
\subseteq \C$ denotes the projective centralizer of $\B$.
Letting $\B = \C_{pt}$ we find that $\C = (\B_{ad})'$ equals the
projective centralizer of $\C_{pt}$.
By  \cite[Lemma 3.15]{DGNOI}, the projective centralizer of a fusion subcategory
$\B$ is a graded
extension of the centralizer $\B'$. Since $\C = \C_{ad}$, this implies that  $\C
= \C_{pt}'$, as claimed.
\end{proof}

\section{Grothendieck equivalence of fusion categories}\label{g-crossed}

Let $\C$ and $\tilde\C$ be fusion categories. A \textit{Grothendieck equivalence} between $\C$ and $\tilde\C$ is a bijection $f:\Irr \C \rightarrow \Irr \tilde\C$ such that
\begin{equation}
f(\uno)=\uno, \quad \text{and } N_{f(X),f(Y)}^{f(Z)}=N_{X,Y}^Z,
\end{equation} for all $X, Y, Z \in \Irr \C$.

We shall say that $\C$ and $\tilde\C$ are \textit{Grothendieck equivalent} if there exist a Grothendieck equivalence between them.

\begin{remark}\label{biy} Suppose $f: \Irr \C \rightarrow \Irr \tilde\C$ is a Grothendieck equivalence. Then the map $f$ extends canonically to a ring isomorphism $f: K_0(\C)\rightarrow K_0(\tilde \C)$.

In particular,  $f$ induces a bijection between the lattices of fusion subcategories of $\C$ and $\tilde \C$.
If $\D$ is a fusion subcategory of $\C$, we shall denote by $f(\D)$ the corresponding fusion subcategory of $\tilde\C$, that is,
$f(\D)$ is the fusion subcategory whose simple objects are $f(X)$, $X \in \Irr \D$.
Note that $f$ restricts to a Grothendieck equivalence $f: \Irr\D \to \Irr f(\D)$.
\end{remark}

\begin{proposition}\label{groth-eq}
Let $\C$ and $\tilde\C$ be fusion categories and suppose that $\mathit{f:\Irr\C\rightarrow \Irr\tilde\C}$ is a \textit{Grothendieck equivalence}. Then the  following hold:

(i)   If $X\in K_0(\C)$, then \emph{FPdim}$(f(X))$ = \emph{FPdim}$(X)$. Hence, if $\D$ is a fusion subcategory of $\C$, then \emph{FPdim}$(f(\D))$= \emph{FPdim}$(\D)$.

(ii) $X\in \Irr \C$ is invertible if and only if $f(X)\in \Irr \tilde \C$ is invertible.

(iii) If $X \in \Irr \C$, then $f(X^*) = f(X)^*$.

(iv) $f(\C^{(n)})=\tilde \C^{(n)}$, for all $n \geq 0$. In particular,  $\C$ is nilpotent if and only if $\tilde \C$ is nilpotent.

(v) $f$ induces a group isomorphism $f: U(\C) \to U(\tilde \C)$ such that $f(\C_g) = \tilde \C_{f(g)}$.
\end{proposition}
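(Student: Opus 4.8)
The plan is to derive all five assertions from the single fact, recorded in Remark \ref{biy}, that a Grothendieck equivalence $f$ extends to an isomorphism of \emph{based} rings $f\colon K_0(\C)\to K_0(\tilde\C)$; that is, a ring isomorphism carrying the distinguished basis $\Irr\C$ bijectively onto $\Irr\tilde\C$ and fixing the unit. Every quantity appearing in (i)--(v) is expressible purely in terms of this based-ring structure, so each is automatically preserved. For (i) I would recall that the matrix $L_X$ of left multiplication by $[X]$ in the basis $\Irr\C$ has $(Z,W)$-entry $N^Z_{X,W}$. The defining identity $N^{f(Z)}_{f(X),f(W)}=N^Z_{X,W}$ says precisely that, after relabeling the basis by $f$, the multiplication matrix of $[f(X)]$ coincides entrywise with that of $[X]$; the two matrices thus share a characteristic polynomial, hence the same Frobenius--Perron eigenvalue, giving $\FPdim f(X)=\FPdim X$. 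The statement for a fusion subcategory $\D$ then follows from $\FPdim\D=\sum_{X\in\Irr\D}(\FPdim X)^2$ together with the fact, from Remark \ref{biy}, that $f$ restricts to a bijection $\Irr\D\to\Irr f(\D)$.

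Assertion (ii) is then immediate, since a simple object is invertible exactly when its Frobenius--Perron dimension equals $1$. For (iii) I would use that for simple objects $X,Y$ one has $N^\uno_{X,Y}=\dim\Hom_\C(\uno,X\otimes Y)$, which equals $1$ when $Y\cong X^*$ and $0$ otherwise. Since $f(\uno)=\uno$, the equality $N^\uno_{f(X),f(Y)}=N^\uno_{X,Y}$ forces $f(X)^*$ to be the unique simple object of the form $f(Y)$ with $Y\cong X^*$; that is, $f(X^*)=f(X)^*$.

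The remaining two parts rest on identifying the adjoint subcategory and the universal grading at the level of the based ring. For (iv), the based subring $K_0(\C)_{ad}=K_0(\C_{ad})$ is by definition spanned by the simple classes occurring in the products $[X][X^*]$, $X\in\Irr\C$. Because $f$ is a based-ring isomorphism commuting with duality (by (iii)), it carries these generators to the corresponding generators for $\tilde\C$, whence $f(K_0(\C)_{ad})=K_0(\tilde\C)_{ad}$, i.e.\ $f(\C_{ad})=\tilde\C_{ad}$. Iterating along the series $\C^{(0)}=\C$, $\C^{(n+1)}=(\C^{(n)})_{ad}$ yields $f(\C^{(n)})=\tilde\C^{(n)}$ for all $n\geq 0$; since $f$ maps $\vect$ to $\vect$, the series of $\C$ reaches $\vect$ precisely when that of $\tilde\C$ does, so nilpotency of one is equivalent to nilpotency of the other.

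For (v) I would invoke the description of $U(\C)=U(K_0(\C))$ recalled in Subsection \ref{nilp-solv}: $K_0(\C)$ decomposes as a direct sum $\oplus_{g\in U(\C)}K_0(\C)_g$ of indecomposable based $K_0(\C)_{ad}$-bimodules with $K_0(\C)_e=K_0(\C)_{ad}$, and the product on $U(\C)$ is dictated by the summand into which a product of homogeneous elements falls. As $f$ is a based-ring isomorphism sending $K_0(\C)_{ad}$ to $K_0(\tilde\C)_{ad}$, it carries $K_0(\C)_{ad}$-bimodules to $K_0(\tilde\C)_{ad}$-bimodules and preserves both basedness and indecomposability, so it matches the two decompositions term by term; the induced bijection on grading components is then, by the defining property of the product, a group isomorphism $f\colon U(\C)\to U(\tilde\C)$ with $f(\C_g)=\tilde\C_{f(g)}$. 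The content is almost entirely bookkeeping, the only genuinely substantive input being the Frobenius--Perron uniqueness used in (i); I expect the main (mild) obstacle to be checking carefully in (v) that the bimodule decomposition defining $U(\C)$ is canonical enough to be transported by $f$, i.e.\ that $f$ respects indecomposability of based bimodules, which follows from $f$ being an isomorphism of based rings but deserves an explicit word.
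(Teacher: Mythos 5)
Your proposal is correct, and for parts (ii)--(v) it follows the paper's argument essentially step for step: invertibility via $\FPdim=1$, duality via $N^{\uno}_{f(X),f(Y)}=N^{\uno}_{X,Y}$, the adjoint subcategory via its generators $X\otimes X^*$ plus induction, and the universal grading via transport of the decomposition of $K_0(\C)$ into indecomposable based $K_0(\C)_{ad}$-bimodules (whose uniqueness up to permutation of $U(\C)$, which the paper states explicitly citing \cite{gel-nik}, is exactly the point you flag as needing an explicit word). The one place you genuinely diverge is (i): the paper invokes \cite[Lemma 8.3]{ENO}, namely that $\FPdim\colon K_0(\C)\to\mathbb{R}$ is the \emph{unique} ring homomorphism taking positive values on nonzero objects, so that $\FPdim\circ f$ must equal $\FPdim$; you instead observe that the left-multiplication matrix of $[f(X)]$ in the basis $\Irr\tilde\C$ is obtained from that of $[X]$ by relabeling (i.e.\ conjugation by a permutation matrix), hence has the same characteristic polynomial and the same Frobenius--Perron eigenvalue. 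Your route is more elementary and self-contained, working straight from the paper's definition of $\FPdim$ for simple objects; its only cost is that for a general element of $K_0(\C)$ you must then extend by linearity from the basis, whereas the paper's uniqueness argument gives equality of the two homomorphisms on all of $K_0(\C)$ in one stroke. Both arguments are sound.
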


\begin{proof}
(i)  By  Remark \ref{biy} we know that $f$ extends to a ring isomorphism
$ f:K_0(\C)\rightarrow K_0(\tilde \C)$. By \cite[Lemma 8.3]{ENO} $\text{FPdim}:K_0(\C)\rightarrow \mathbb{R}$ is the only ring homomorphism such that $\FPdim (X)>0$ for any $0\neq X\in\C$, so $\FPdim (f(X)) = \FPdim (X)$, for all $X \in \Irr \C$.

(ii) This follows from  (i), since the invertible objects  of a fusion category are exactly those objects with Frobenius-Perron dimension 1.

(iii) Since $f(\uno) = \uno$, then $N_{f(X), f(X^*)}^\uno = N_{X, X^*}^\uno = 1$. Therefore $f(X^*) = f(X)^*$.

(iv) It follows from (iii) and the fact that $f$ preserves fusion rules that $f(\C_{ad})=\tilde \C_{ad}$. Then $f$ induces by restriction a Grothendieck equivalence $\Irr \C_{ad} \to \Irr \tilde \C_{ad}$. An inductive argument implies that $f(\C^{(n)})=\tilde \C^{(n)}$, for all $n \geq 0$.

(v)  By definition $U(\C)=U(K_0(\C))$ \cite{gel-nik} and $K_0(\C)$ decomposes into a direct sum of indecomposable based $K_0(\C)_{ad}$-bimodules $K_0(\C)=\displaystyle{\oplus_{g\in U(\C)}}K_0(\C)_g$, with $K_0(\C)_e=K_0(\C)_{ad}$. This decomposition is unique up to a permutation of $U(\C)$.
By Remark \ref{biy}, $f$ extends to a ring isomorphism $f:K_0(\C)\rightarrow K_0(\tilde \C)$ and by (iv) $f$ restricts to a ring isomorphism $K_0(\C)_{ad} = K_0(\C_{ad}) \cong K_0(\tilde \C_{ad})= K_0(\tilde \C)_{ad}$. So for all $g\in U(\C)$, $f(K_0(\C)_g) = K_0(\tilde\C)_{\tilde g}$, for a unique $\tilde g\in U(\tilde\C)$. Letting $f(g) = \tilde g$, we obtain a group isomorphism $f: U(\C) \to U(\tilde \C)$ such that $f(K_0(\C_g)) = K_0(\tilde \C_{f(g)})$. This implies (v).
\end{proof}

\begin{remark}\label{grading} Let $G$ be a finite group. Observe that any $G$-grading on a fusion category $\C$ with neutral component
$\D$ is uniquely determined by a $G$-grading on the Grothendieck ring $K_0(\C)$
with neutral component $K_0(\D)$. In particular, if $\C$ and $\tilde\C$ are
Grothendieck equivalent, then $\C$ is $G$-graded with neutral component $\D$ if
and only if
$\tilde \C$ is $G$-graded with neutral component $\tilde \D$, such that $\tilde
\D$ and $\D$ are Grothendieck equivalent.
\end{remark} 

Our first theorem concerns fusion categories with dihedral fusion rules.

\begin{theorem}\label{dihedral-fr} Let $n$ be a natural number and let $\C$ be a fusion category. Suppose that $\C$ is Grothendieck equivalent to the category $\Rep D_n$, where $D_n$ is the dihedral group of order $2n$. Then $\C$ is solvable.
\end{theorem}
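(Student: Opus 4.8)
The plan is to argue by induction on $n$, using the explicit fusion rules of $\Rep D_n$ together with Proposition \ref{groth-eq} and Remark \ref{grading}. First I would record the invariants transported by the Grothendieck equivalence $f$: since every simple object of $\Rep D_n$ has Frobenius--Perron dimension $1$ or $2$, Proposition \ref{groth-eq}(i) shows that $\C$ is integral with $\FPdim \C = 2n$ and $\cd(\C) \subseteq \{1,2\}$, and Proposition \ref{groth-eq}(ii) identifies $G(\C)$ with the group of one-dimensional characters of $D_n$, namely $\Zz_2$ for $n$ odd and $\Zz_2 \times \Zz_2$ for $n$ even. By Proposition \ref{groth-eq}(iii) every simple object of $\C$ is self-dual, and the invertible object $g$ coming from the sign character satisfies $g \otimes X \cong X$ for every simple $X$, since this relation holds in $\Rep D_n$ and is a statement about fusion rules. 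The base cases $n \leq 2$ are immediate: there $D_n$ is abelian, so $\C$ is pointed and hence solvable.

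For the inductive step with $n \geq 3$ I would split according to parity, controlled by the universal grading. By Proposition \ref{groth-eq}(v) we have $U(\C) \cong U(\Rep D_n)$, and using the fusion rules (equivalently the standard identification $(\Rep G)_{ad} \cong \Rep(G/Z(G))$) one computes $U(\Rep D_n) \cong \widehat{Z(D_n)}$, which is trivial for $n$ odd and $\cong \Zz_2$ for even $n \geq 4$. Thus when $n$ is even the category $\C$ carries a faithful $\Zz_2$-grading with neutral component $\C_{ad}$, and by Remark \ref{grading} together with Proposition \ref{groth-eq}(iv) the category $\C_{ad}$ is Grothendieck equivalent to $(\Rep D_n)_{ad} \cong \Rep D_{n/2}$. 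The induction hypothesis makes $\C_{ad}$ solvable, and since $\C$ is a $\Zz_2$-extension of $\C_{ad}$ with $\Zz_2$ solvable, $\C$ is solvable by the closure properties in \cite[Proposition 4.1]{ENO2}. This reduces everything to the case of odd $n$.

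For odd $n \geq 3$ the universal grading is trivial, so $\C = \C_{ad}$ and no reduction by extensions is available; this is the heart of the matter. When $n = p^a$ is an odd prime power, $\FPdim \C = 2p^a$ has the form $2^1 p^a$, so $\C$ is solvable by the fusion-categorical analogue of Burnside's $p^aq^b$ theorem established in \cite{ENO2}. The main obstacle is the general odd case, where my aim would be to realize $\C$ as a $\Zz_2$-equivariantization of the pointed category $\Rep \Zz_n$; granting this, solvability is automatic, since a pointed category is solvable and a $\Zz_2$-equivariantization of a solvable category is solvable by \cite[Proposition 4.1]{ENO2}. The difficulty is that the equivariantization structure is not a priori visible in the fusion rules: one must promote the order-two invertible $g$, which acts trivially on $\Irr(\C)$, to a Tannakian subcategory $\Rep \Zz_2 \hookrightarrow \Z(\C)$ lying in the image of the forgetful functor $\Z(\C) \to \C$, so that de-equivariantization applies and the dimension constraint $\cd(\C) \subseteq \{1,2\}$ forces the neutral piece to be pointed. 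Producing this central lift from the fusion data alone, for instance by constructing a half-braiding on $g$, or alternatively by first proving that such a $\C$ is group-theoretical with associated group a solvable extension of $\Zz_n$ by $\Zz_2$, is where the genuine work lies, and I expect it to be the main technical hurdle of the proof.
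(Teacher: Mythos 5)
Your proposal is incomplete: the general odd case is a genuine gap, and you acknowledge as much. Your reductions for $n \leq 2$, for even $n$ (via the universal grading $U(\C) \cong \Zz_2$ and induction on $\Zz_2$-extensions, which is exactly the paper's even-case argument), and for $n$ an odd prime power (via the categorical Burnside theorem of \cite{ENO2}, since $\FPdim \C = 2p^a$) are all correct. But for odd $n$ with at least two distinct prime factors (say $n = 15$), none of these tools apply: the universal grading is trivial, so no extension is available, and $2n$ is not of the form $p^aq^b$. At that point you only describe what \emph{would} suffice (a central lift of the order-two invertible object, or a proof of group-theoreticity), without producing it. A claimed proof cannot end with "this is where the genuine work lies"; as it stands, the theorem is unproved for infinitely many $n$.

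The paper closes precisely this gap by citing an external result, \cite[Theorem 4.2]{naidu-rowell}: \emph{any} fusion category Grothendieck equivalent to $\Rep D_n$ is group-theoretical, i.e.\ Morita equivalent to some pointed category $\C(\Gamma, \omega)$. This is the "alternative route" you gesture at in your last sentence, and it is not something one can improvise from the fusion rules in a few lines -- it is the substantive input. Granting it, the odd case is immediate: $|\Gamma| = \FPdim \C = 2n$ with $n$ odd, so $\Gamma$ has a normal subgroup of odd order $n$ and index $2$, hence is solvable (using Feit--Thompson for the odd part), and solvability passes through Morita equivalence by \cite[Proposition 4.1]{ENO2}. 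Note also why the paper still needs your even-to-odd reduction even with group-theoreticity in hand: for even $n$ the group $\Gamma$ of order $2n$ need not be solvable a priori (e.g.\ $2n = 60$ admits $\Aa_5$), so one first peels off $\Zz_2$-extensions until the dimension is twice an odd number. Your even-case and base-case arguments are thus genuinely the same as the paper's; what is missing is the one theorem that makes the odd case go through.
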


\begin{proof} It follows from \cite[Theorem 4.2]{naidu-rowell} that a fusion category Grothendieck equivalent to the representation category of a dihedral group is group-theoretical. Then $\C$ is group-theoretical, that is, it is Morita equivalent to a pointed fusion category $\C(\Gamma, \omega)$, where $\Gamma$ is a group and $\omega$ is a 3-cocycle on $\Gamma$. 

Suppose first that $n$ is odd. Then the order of $\Gamma$ is equal to $2n$ and, since $n$ is odd, $\Gamma$ is solvable. Then $\C$ is solvable too.

If $n$ is even, then the center of $D_n$ is of order $2$ and $D_n/Z(D_n) \cong D_{n/2}$. Therefore, the category $\Rep D_n$ is a $\Zz_2$-extension of $\Rep D_{n/2}$; see \cite[Example 3.2]{gel-nik}. Since $\C$ is Grothendieck equivalent to $\Rep D_n$, then it is a $\Zz_2$-extension of a fusion subcategory $\D_1$, where $\D_1$ is Grothendieck equivalent to $\Rep D_{n/2}$. 
Continuing this process, we find that the category $\C$ is obtained by a sequence of $\Zz_2$-extensions from a fusion subcategory $\D$ such that $\D$ is Grothendieck equivalent to $\Rep D_{m}$, with $m$ an odd natural number. 
By the above, $\D$ is solvable and therefore so is $\C$. This finishes the proof of the theorem. \end{proof}

The following consequence of Proposition \ref{cyc-nilp} gives some restrictions that guarantee that the solvability of a fusion category is a Grothendieck invariant.

\begin{proposition}\label{cor-cyc} Let $p$ be a prime number. Suppose that $\C$ is a solvable fusion category such that $G(\C) \cong \Zz_p$ and $\C$ has no simple objects of Frobenius-Perron dimension $p$. If $\C$ is Grothendieck equivalent to a fusion category $\tilde \C$, then $\tilde \C$ is solvable.
\end{proposition}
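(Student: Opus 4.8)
The plan is to reduce the claim to Proposition~\ref{cyc-nilp} by transporting the solvability of $\C$ across the Grothendieck equivalence $f:\Irr\C \to \Irr\tilde\C$, using the fact that the hypotheses placed on $\C$ are all Grothendieck invariants. First I would apply Proposition~\ref{cyc-nilp} to $\C$ itself: since $\C$ is solvable, $G(\C)\cong\Zz_p$, and $\C$ has no simple objects of Frobenius--Perron dimension $p$, the proposition gives that $\C$ is cyclically nilpotent. This is the essential first move, because cyclic nilpotency—unlike solvability—is manifestly a Grothendieck invariant: by Proposition~\ref{groth-eq}(iv), $f(\C^{(n)})=\tilde\C^{(n)}$ for all $n$, so $\C$ is nilpotent iff $\tilde\C$ is, and by Proposition~\ref{groth-eq}(v) together with Remark~\ref{grading} the groups realizing each extension step are preserved, so in particular the cyclic nilpotency of $\C$ passes to $\tilde\C$.

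Concretely, I would argue as follows. Because $\C$ is cyclically nilpotent, there is a chain $\vect=\C_0\subseteq\C_1\subseteq\dots\subseteq\C_m=\C$ in which each $\C_i$ is a $G_i$-extension of $\C_{i-1}$ with $G_i$ cyclic. Applying $f$ and invoking Remark~\ref{biy}, the images $f(\C_i)$ form a chain of fusion subcategories of $\tilde\C$ from $\vect$ to $\tilde\C$. By Remark~\ref{grading}, each grading $\C_i=\oplus_{g\in G_i}(\C_i)_g$ with neutral component $\C_{i-1}$ corresponds to a grading of $K_0(\C_i)$, which $f$ carries to a $G_i$-grading of $K_0(f(\C_i))$ with neutral component $K_0(f(\C_{i-1}))$; hence $f(\C_i)$ is a $G_i$-extension of $f(\C_{i-1})$ with the same cyclic group $G_i$. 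Thus $\tilde\C$ is cyclically nilpotent.

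Finally, every cyclically nilpotent fusion category is solvable: it is Morita equivalent to itself (take the regular module category), so the definition of solvability as Morita equivalence to a cyclically nilpotent category is satisfied trivially. Therefore $\tilde\C$ is solvable, which is the desired conclusion.

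The main obstacle, and the step that requires care rather than routine checking, is the passage from cyclic nilpotency of $\C$ to that of $\tilde\C$: one must verify that a Grothendieck equivalence preserves not merely nilpotency but the cyclicity of the groups appearing at each stage. This is where Proposition~\ref{groth-eq}(v) is doing the real work, since it identifies $U(\C)\cong U(\tilde\C)$ as groups and matches the graded components, guaranteeing that the universal grading group at each step of the canonical upper central series is the same finite group on both sides; the cyclic groups $G_i$ arising in the nilpotency series are then literally the same, so no new obstruction to cyclicity is introduced under $f$. Everything else is a transcription of the definitions through the ring isomorphism $f:K_0(\C)\to K_0(\tilde\C)$.
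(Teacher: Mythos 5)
Your proof is correct and follows the same route as the paper: apply Proposition~\ref{cyc-nilp} to get that $\C$ is cyclically nilpotent, transfer cyclic nilpotency across the Grothendieck equivalence (via Remark~\ref{grading} and Proposition~\ref{groth-eq}), and conclude solvability since cyclically nilpotent categories are solvable by definition. The paper's proof is just a terse two-sentence version of exactly this argument; your writeup merely makes explicit the transfer step that the paper leaves implicit.
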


\begin{proof} By Proposition \ref{cyc-nilp}, $\C$ is cyclically nilpotent. Therefore  $\tilde \C$ is cyclically nilpotent, whence solvable.
\end{proof}

\begin{remark}\label{rmk-ansn-2} For all $n \geq 2$, the alternating group $\Aa_{n}$ has no irredubible
representation of degree $2$\footnote{This can be seen, for instance, as a consequence of the
Nichols-Richmond theorem \cite[Theorem 11]{NR}}.
In addition, if $n \geq 5$ ($\Rep \Ss_{4}$ is of type $(1, 2; 2, 1; 3, 2)$),  the symmetric group $\Ss_{n}$ has no irredubible representation of degree $2$ neither. In fact, if $V$ were such a representation, then the restriction $V\vert_{\Aa_{n}}$ would not be irreducible. Hence, since  $\Aa_{n}$ has no nontrivial one-dimensional representations (because $n \geq 5$), then $V\vert_{\Aa_{n}}$ would be trivial. This is impossible, because the kernel of the restriction functor $\Rep \Ss_{n} \to \Rep \Aa_{n}$ is the pointed subcategory $\Rep \Zz_2 \subseteq \Rep \Ss_{n}$.
\end{remark}

\begin{corollary}\label{fr-sn} Let $n\geq 5$ be a natural number and let $\C$ be a fusion category. Suppose that $\C$ is Grothendieck equivalent to $\Rep \Ss_n$. Then $\C$ is not solvable.
\end{corollary}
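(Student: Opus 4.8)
The plan is to argue by contradiction: assuming $\C$ is solvable, I would use the Grothendieck equivalence to verify the hypotheses of Proposition \ref{cyc-nilp} with $p = 2$, deduce that $\C$ is cyclically nilpotent, and then contradict this with the fact that $\Rep \Ss_n$ is not nilpotent.

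First I would extract the numerical invariants of $\Rep \Ss_n$. For every $n \geq 2$ the symmetric group $\Ss_n$ has exactly two one-dimensional representations, the trivial and the sign character, so $G(\Rep \Ss_n) \cong \Zz_2$; and by Remark \ref{rmk-ansn-2}, for $n \geq 5$ the group $\Ss_n$ has no irreducible representation of degree $2$, so $2 \notin \cd(\Rep \Ss_n)$. Let $f \colon \Irr(\Rep \Ss_n) \to \Irr \C$ be a Grothendieck equivalence. By Proposition \ref{groth-eq}(ii) together with Remark \ref{biy}, $f$ carries the invertible objects bijectively onto the invertible objects and respects the product, hence restricts to a group isomorphism $G(\Rep \Ss_n) \cong G(\C)$; thus $G(\C) \cong \Zz_2$. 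By Proposition \ref{groth-eq}(i), $f$ preserves Frobenius-Perron dimensions, so $2 \notin \cd(\C)$ as well, i.e.\ $\C$ has no simple object of Frobenius-Perron dimension $2$.

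Now suppose, for a contradiction, that $\C$ is solvable. Since $G(\C) \cong \Zz_2$ and $\C$ has no simple object of Frobenius-Perron dimension $2$, Proposition \ref{cyc-nilp} applied with $p = 2$ shows that $\C$ is cyclically nilpotent, and in particular nilpotent. On the other hand, $\Rep \Ss_n$ is \emph{not} nilpotent: the center of $\Ss_n$ is trivial for $n \geq 5$, so $\Ss_n$ is not a nilpotent group, and $\Rep G$ is nilpotent if and only if $G$ is; equivalently, $(\Rep \Ss_n)_{ad} = \Rep \Ss_n$, so the upper central series of $\Rep \Ss_n$ is constant and never reaches $\vect$. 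By Proposition \ref{groth-eq}(iv), nilpotency is invariant under Grothendieck equivalence, so $\C$ is not nilpotent either. This contradiction proves that $\C$ is not solvable.

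The point to get right is the matching of hypotheses for Proposition \ref{cyc-nilp}: everything hinges on the two features $G(\C) \cong \Zz_2$ and $2 \notin \cd(\C)$ being exactly what the Grothendieck equivalence transports from $\Rep \Ss_n$, which is supplied by parts (i) and (ii) of Proposition \ref{groth-eq}, and on the restriction $n \geq 5$, which is precisely what guarantees, via Remark \ref{rmk-ansn-2}, that there are no degree-$2$ simple objects. The concluding non-nilpotency of $\Rep \Ss_n$ is the only genuinely group-theoretic input, and it is elementary.
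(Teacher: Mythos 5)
Your proof is correct and takes essentially the same route as the paper: the paper likewise extracts $G(\C) \cong \Zz_2$ and the absence of simple objects of Frobenius-Perron dimension $2$ from Remark \ref{rmk-ansn-2}, and then applies Proposition \ref{cor-cyc}, which is precisely Proposition \ref{cyc-nilp} packaged with Grothendieck invariance. The only cosmetic difference is how the contradiction is closed: the paper transports cyclic nilpotency across the equivalence to conclude $\Rep \Ss_n$ would be solvable, contradicting non-solvability of $\Ss_n$, whereas you contradict non-nilpotency of $\Rep \Ss_n$ via Proposition \ref{groth-eq}(iv); both closings are valid.
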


\begin{proof} The category $\Rep \Ss_n$ is not solvable. On the other hand, the group $\Ss_n$ has two non-equivalent representations of degree one and no irreducible representation of degree two, in view of Remark \ref{rmk-ansn-2}. Hence $G(\C) \cong \Zz_2$ and $\C$ has no simple objects of Frobenius-Perron dimension $2$. The result is thus obtained as a consequence of Proposition \ref{cor-cyc}.
\end{proof}

\begin{remark}\label{fr-simple} Let $G$ be a non-abelian finite simple group. If $\C$ is a fusion category Grothendieck equivalent to $\Rep G$, then $\C_{pt} = \vect$ and therefore $\C$ is not solvable.   
\end{remark}

\section{Examples of non-solvable fusion rules}\label{nsol-fr}

\subsection{Abelian extensions}
Consider an abelian exact sequence of Hopf algebras
\begin{equation}\label{abel-es}
k \longrightarrow k^\Gamma \overset{i}\longrightarrow H
\overset{\pi}\longrightarrow kF
\longrightarrow k,
\end{equation} where $\Gamma$ and $F$ are finite groups.
Then \eqref{abel-es} gives rise to actions by permutations
$\Gamma \overset{\lhd}\longleftarrow
\Gamma \times F \overset{\rhd}\longrightarrow F$ such that $(\Gamma, F)$ is a
matched pair of groups.
Moreover, $H \cong k^\Gamma {}^\tau\#_\sigma kF$ is a bicrossed product with
respect to normalized invertible
$2$-cocycles $\sigma: F \times F \to k^\Gamma$, $\tau: \Gamma \times \Gamma \to
k^F$, satisfying suitable compatibility conditions. See \cite{ma-ext}.

The multiplication and comultiplication of $k^\Gamma {}^\tau\#_\sigma kF$ are determined in the basis $\{e_{s}\#x /s\in\Gamma, x\in F\}$, by the formulas
\begin{align}
(e_s \# x)\otimes(e_t \# y) & = \delta_{t,s\lhd x} \, \sigma_s(x, y) \, e_s \# xy, \\
\Delta(e_s \# x) & = \sum_{gh=s} \tau_x(g, h) \, e_g \# (h \rhd x) \otimes e_h \#x,
\end{align}
for all $s, t \in \Gamma$, $x, y \in F$, where $\sigma_s(x,y)=\sigma(x,y)(s)$ and $\tau_x(s, t)=\tau(s,t)(x)$.  See \cite{ma-ext}.
The exact sequence \eqref{abel-es} is called \emph{split} if $\sigma$ and $\tau$ are the trivial
2-cocycles.

\medbreak For all $s\in\Gamma$, the restriction of the map $\sigma_s:F \times F \to k^{\times}$ to the stabilizer subgroup $F_s  = F \cap sFs^{-1}$ is a 2-cocycle on $F_s$.

The irreducible representations of $H \cong k^\Gamma {}^\tau\#_\sigma kF$ are classified for pairs $(s,U_s)$, where $s$ is a representative of the orbits of the action of $F$ in $\Gamma$ and $U_s$ is an irreducible representation of the  twisted group algebra $k_{\sigma_s}F_s$, that is, a projective irreducible representation $F_s$ with cocycle $\sigma_s$. Given a pair $(s,U_s)$, the corresponding irreducible representation is given by
\begin{equation}\label{irrepns-bcpt}W_{(s,U_s)} = \Ind_{k^\Gamma \otimes kF_s}^H s \otimes U_s.\end{equation}
Observe that $\dim W_{(s,U_s)} = [F: F_s] \dim U_s$. See \cite{MW}.

\begin{remark}\label{2-morita} Recall that every matched pair $(\Gamma, F)$ gives rise to a group structure, denoted $F \bowtie \Gamma$, on the product $F \times \Gamma$ in the form $$(x, s) (y, t) = (x(s \rhd y), (s\lhd y) t),$$ $x, y \in F$, $s, t \in \Gamma$, where $\Gamma \overset{\lhd}\longleftarrow
\Gamma \times F \overset{\rhd}\longrightarrow F$ are the associated compatible actions.

The group $F \bowtie \Gamma$ has a canonical exact factorization into its subgroups $F = F \times \{e\}$ and $\Gamma = \{e\} \times \Gamma$; that is, $F \bowtie \Gamma = F \Gamma$ and $F\cap \Gamma = \{e\}$.

Conversely, every finite group $G$ endowed with an exact factorization $G = F \Gamma$ into its subgroups $F$ and $\Gamma$ gives rise to canonical actions by permutations $\Gamma \overset{\lhd}\longleftarrow
\Gamma \times F \overset{\rhd}\longrightarrow F$ making $(\Gamma, F)$ into a matched pair of groups.

\medbreak Suppose $H \cong k^\Gamma {}^\tau\#_\sigma kF$ is an abelian extension of $k^\Gamma$ by $kF$. It follows from \cite[Theorem 1.3]{gp-ttic}  that the category $\Rep H$ is Morita equivalent to the pointed fusion category $\C(F\bowtie\Gamma, \omega)$, where $\omega$ is a $3$-cocycle on $F\bowtie\Gamma$ arising from the pair $(\sigma, \tau)$ in an exact sequence due to G. I. Kac. In particular, there are equivalences of braided fusion categories $$\Z(\Rep H) \cong \Rep D(H) \cong \Rep D^\omega(F \bowtie \Gamma),$$ where $D^\omega(F \bowtie \Gamma)$ is the twisted Drinfeld double of $F\bowtie \Gamma$ \cite{dpr}. Note that $\Rep H$ is solvable if and only if the group $F \bowtie \Gamma$ is solvable.
\end{remark}

\begin{example}\label{ddoble} Let $G$ be a finite group. Then the Drinfeld
double $D(G)$ fits into a split cocentral abelian exact sequence
$$k \longrightarrow k^G \longrightarrow D(G) \longrightarrow kG
\longrightarrow k.$$ This exact sequence is associated to the adjoint action
$\lhd: G \times G \to G$, $h\lhd g = g^{-1}hg$, and to the trivial action $\rhd:
G \times G \to G$.
\end{example}

The following lemma describes the group of invertible objects of the category
$\Rep H$, when $H$ is an abelian extension.

\begin{lemma}\label{inv-abel} Suppose $H$ fits into an exact sequence
\eqref{abel-es}. Then there is an exact sequence
$$1\longrightarrow \widehat F \overset{\pi^*}\longrightarrow G(\Rep H)
\overset{i^*}\longrightarrow \Gamma_0 \longrightarrow 1,$$ where $\widehat F$
denotes the group of one-dimensional characters of $F$ and $\Gamma_0 = \{s\in
\Gamma^F:\, [\sigma_s] = 1 \textrm{ in } H^2(\Gamma, (k^F)^\times) \}$.
\end{lemma}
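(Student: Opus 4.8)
The plan is to read the group of invertible objects straight off the classification of irreducible $H$-modules recalled in \eqref{irrepns-bcpt}. Since $H$ is semisimple, invertibility of a simple module means Frobenius--Perron dimension $1$, and by the dimension formula $\dim W_{(s,U_s)} = [F:F_s]\dim U_s$. Thus I would first observe that $W_{(s,U_s)}$ is invertible if and only if $[F:F_s]=1$ and $\dim U_s = 1$; that is, if and only if $F_s = F$ --- equivalently $s$ is a fixed point of the action $\lhd$ of $F$ on $\Gamma$, so $s\in\Gamma^F$ --- and $U_s$ is a one-dimensional $\sigma_s$-projective representation of $F_s=F$. Such a $U_s$ exists precisely when $[\sigma_s]=1$, i.e. exactly when $s\in\Gamma_0$. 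This already pins down the set of $s\in\Gamma$ that support an invertible object as $\Gamma_0$.

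Next I would introduce the two homomorphisms as the restrictions to invertible objects of the monoidal functors dual to $\pi$ and $i$. The Hopf algebra surjection $\pi\colon H\to kF$ induces $\pi^*\colon \widehat F = G(\Rep kF)\to G(\Rep H)$, $\pi^*(\chi)=\chi\circ\pi$; in the parametrization above one computes $\pi^*(\chi)=W_{(e,\chi)}$, using $\pi(e_t\#x)=\delta_{t,e}\,x$. The inclusion $i\colon k^\Gamma\to H$ induces, by restriction of representations, a homomorphism $i^*\colon G(\Rep H)\to G(\Rep k^\Gamma)\cong\Gamma$; since $k^\Gamma$ acts on $W_{(s,U_s)}$ through the algebra character $\mathrm{ev}_s$, one has $i^*(W_{(s,U_s)})=s$. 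Multiplicativity of $i^*$ I would deduce from the comultiplication formula $\Delta(e_r\#x)=\sum_{gh=r}\tau_x(g,h)\,e_g\#(h\rhd x)\otimes e_h\#x$: evaluated on $W_{(s,U_s)}\otimes W_{(t,U_t)}$, on which $k^\Gamma$ acts through $\mathrm{ev}_s\otimes\mathrm{ev}_t$, only the summand with $g=s$, $h=t$ survives, so the tensor product is supported over $st$.

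For exactness I would argue as follows. The map $\pi^*$ is injective because $\pi$ is surjective. By the first paragraph the image of $i^*$ is exactly $\Gamma_0$, which gives surjectivity onto $\Gamma_0$ and, as the image of a group homomorphism, shows at the same time that $\Gamma_0$ is a subgroup of $\Gamma$. It remains to identify $\ker i^*$ with $\operatorname{Im}\pi^*$. Here $\ker i^*$ consists of the invertible objects supported over $s=e$, where $F_e=F$. Since the unit object $\uno=\epsilon\circ\pi=W_{(e,\epsilon)}$ is itself supported over $e$, the twisted group algebra $k_{\sigma_e}F$ already admits a one-dimensional representation, whence $[\sigma_e]=1$; therefore $k_{\sigma_e}F\cong kF$ and its one-dimensional representations are precisely the characters $\chi\in\widehat F$. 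As each $W_{(e,\chi)}$ equals $\pi^*(\chi)$, we get $\ker i^*=\{W_{(e,\chi)}:\chi\in\widehat F\}=\operatorname{Im}\pi^*$, completing the exact sequence.

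The only genuinely delicate point will be this middle exactness, namely that every invertible object lying over $e$ descends from $\widehat F$; the hard part is the cocycle bookkeeping for $\sigma_e$, i.e. checking that the restriction $\sigma_s|_{F_s}$ is the $2$-cocycle governing $\dim U_s$ and that for $s=e$ the bare existence of the unit object forces $\sigma_e$ to be a coboundary on $F$. The remaining ingredients --- the dimension count for invertibility, the multiplicativity of $i^*$ through the comultiplication, and the injectivity of $\pi^*$ --- I expect to be routine.
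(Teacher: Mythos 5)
Your proof is correct, but it follows a genuinely different route from the paper's. The paper's own proof is a two-line reduction: it identifies $G(\Rep H)$ with the group $G(H^*)$ of group-like elements of the dual Hopf algebra, observes that $H^*$ fits into the dual abelian extension $k\to k^F\to H^*\to k\Gamma\to k$, and then invokes \cite[Lemma 2.2]{ext-ty} for the structure of the group-likes of an abelian extension. You instead work directly inside $\Rep H$, using the Montgomery--Witherspoon parameterization \eqref{irrepns-bcpt} and the dimension formula $\dim W_{(s,U_s)}=[F:F_s]\dim U_s$: invertibility forces $F_s=F$ and $\dim U_s=1$, a one-dimensional $\sigma_s$-projective representation of $F$ exists exactly when $[\sigma_s]=1$ (so the support of an invertible lands in $\Gamma_0$, and every point of $\Gamma_0$ is hit), and exactness in the middle comes from identifying the fiber of $i^*$ over $e$ with $\widehat F=\operatorname{Im}\pi^*$. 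In effect you are reproving the cited lemma in its dual formulation. What your route buys is self-containedness and an explicit description of the invertible objects, which is in fact the style of argument the paper itself uses elsewhere (e.g.\ in Lemma \ref{tann-sn} for $D(\Ss_n)$, and implicitly in Lemma \ref{inv-sdp}); what the paper's route buys is brevity, and a statement already in the dual form needed for Remark \ref{inv-split}. Two minor simplifications of your write-up: multiplicativity of $i^*$ is automatic, since restriction along the Hopf algebra map $i$ is a tensor functor, so your comultiplication computation is needed only to identify the support of the tensor product as $st$; and the ``delicate'' cocycle bookkeeping for $\sigma_e$ that you flag at the end evaporates, because the counit of $H$ being an algebra character forces $\sigma_e\equiv 1$ on the nose (your existence-of-the-unit argument, which only gives $[\sigma_e]=1$, also suffices). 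Note also that the cohomology group $H^2(\Gamma,(k^F)^\times)$ in the statement is a typo for $H^2(F,k^\times)$; your reading of the condition $[\sigma_s]=1$ is the correct one.
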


\begin{proof} The group $G(\Rep H)$ can be identified with the group $G(H^*)$ of
group-like elements in the dual Hopf algebra $H^*$.  In addition, $H^*$ fits
into an abelian extension
\begin{equation}\label{abel-dual}
k \longrightarrow k^F \overset{i^*}\longrightarrow H^*
\overset{\pi^*}\longrightarrow k\Gamma
\longrightarrow k.
\end{equation} The lemma follows from
\cite[Lemma 2.2]{ext-ty}. \end{proof}

\begin{remark}\label{inv-split} Keep the notation in Lemma \ref{inv-abel}. Note
that the dual exact sequence \eqref{abel-dual} is associated to the actions
$F \overset{\lhd'}\longleftarrow F \times \Gamma \overset{\rhd'}\longrightarrow
\Gamma$ defined in the form
$x \lhd' s = (s^{-1} \rhd x^{-1})^{-1}$ and $x \rhd' s = (s^{-1} \lhd
x^{-1})^{-1}$, for all $x\in F$, $s\in \Gamma$ \cite[Exercise 5.5]{ma-ext}.

Hence the exact sequence of groups of Lemma \ref{inv-abel} induces the transpose
of the action $\lhd'$ of $\Gamma_0$ on the abelian group $\widehat F$.

Clearly, \eqref{abel-dual} is split if and only if \eqref{abel-es} is split and,
if this is the case,
the exact sequence of groups in Lemma \ref{inv-abel} is split as well.

Therefore, in the case where $H$ is a split abelian extension, the group $G(\Rep H)$ is isomorphic to the semidirect product $\widehat F \rtimes \Gamma_0$ with respect to the action $\lhd'$ of $\Gamma_0$ on $\widehat F$.
\end{remark}

\begin{corollary}\label{inv-doble} Let $G$ be a finite group. Then the group of
invertible objects of $\Rep D(G)$ is isomorphic to the direct product $G/[G, G]
\times Z(G)$.
\end{corollary}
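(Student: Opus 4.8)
The plan is to apply Lemma \ref{inv-abel} and Remark \ref{inv-split} to the concrete split abelian exact sequence realizing $D(G)$ that is recorded in Example \ref{ddoble}. First I would recall from Example \ref{ddoble} that $D(G)$ fits into the split cocentral abelian exact sequence $k \to k^G \to D(G) \to kG \to k$, so that in the notation of \eqref{abel-es} one has $\Gamma = F = G$, with $\lhd$ the adjoint action $s \lhd x = x^{-1} s x$ and $\rhd$ the trivial action. Since the sequence is split, the $2$-cocycles $\sigma$ and $\tau$ are trivial, so Remark \ref{inv-split} applies and yields $G(\Rep D(G)) \cong \widehat{F} \rtimes \Gamma_0$, the semidirect product being taken with respect to the transpose of the action $\lhd'$ of $\Gamma_0$ on $\widehat{F}$.

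Next I would identify the three ingredients appearing in this description. The character group $\widehat{F} = \widehat{G}$ consists of homomorphisms $G \to k^\times$, which factor through the abelianization; hence $\widehat{F} = \widehat{G/[G,G]}$, and since $G/[G,G]$ is a finite abelian group and $k$ is algebraically closed of characteristic zero, $\widehat{F} \cong G/[G,G]$ as abstract groups. For the subgroup $\Gamma_0$, because $\sigma$ is trivial the cohomological condition $[\sigma_s] = 1$ in $H^2(\Gamma, (k^F)^\times)$ is automatically satisfied, so $\Gamma_0 = \Gamma^F$ is exactly the set of fixed points of the adjoint action, that is $\Gamma_0 = \{s \in G : x^{-1} s x = s \text{ for all } x \in G\} = Z(G)$. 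Finally I would evaluate the action $\lhd'$ by means of the formula in Remark \ref{inv-split}: since $\rhd$ is trivial one gets $x \lhd' s = (s^{-1} \rhd x^{-1})^{-1} = (x^{-1})^{-1} = x$ for all $x \in F$ and $s \in \Gamma$, so $\lhd'$ is the trivial action and therefore so is the induced action of $\Gamma_0$ on $\widehat{F}$.

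Putting these three computations together, the semidirect product degenerates into a direct product, and one concludes $G(\Rep D(G)) \cong \widehat{G} \times Z(G) \cong G/[G,G] \times Z(G)$, as claimed. I do not expect any genuinely hard step here: the argument is a direct specialization of the general machinery already established. The only points demanding care are bookkeeping ones—correctly matching which copy of $G$ plays the role of $\Gamma$ and which plays the role of $F$ in \eqref{abel-es}, and substituting $\rhd = \mathrm{triv}$ into the formula for $\lhd'$ so as to confirm that the group $G(\Rep D(G))$ splits as a genuine direct product rather than a nontrivial semidirect one.
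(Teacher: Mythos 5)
Your proposal is correct and follows essentially the same route as the paper: apply Lemma \ref{inv-abel} and Remark \ref{inv-split} to the split exact sequence of Example \ref{ddoble}, identify $\widehat{F}\cong G/[G,G]$ and $\Gamma_0 = Z(G)$, and observe that the triviality of $\rhd$ forces $\lhd'$ (hence the action of $\Gamma_0$ on $\widehat F$) to be trivial, so the semidirect product is direct. The only cosmetic difference is that the paper identifies $Z(G)$ as the fixed points of the dual action $\rhd'$ (conjugation), while you use the fixed points of the original adjoint action $\lhd$; these coincide, so the arguments are the same.
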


\begin{proof} This is a consequence of Lemma \ref{inv-abel}, in view of Example \ref{ddoble} and Remark \ref{inv-split}.
In fact, $\widehat G \cong G/[G, G]$ and the actions $\rhd ': G \times G \to G$ and $\lhd ': G \times G \to G$ in Remark \ref{inv-split} are given in this case by $h \rhd ' g=hgh^{-1}$ and $g \lhd ' h=g$, for all $g, h\in G$. Then $G_0=\{g\in G | h \rhd ' g=g, \forall h\in G \}=Z(G)$.
The Corollary follows from the fact that the action $\lhd '$ is the trivial one.
\end{proof}

\subsection{Examples associated to the symmetric group}
Let $n \geq 2$ be a natural number. The symmetric group $\Ss_n$ has an exact
factorization $\Ss_n = \langle z \rangle \Gamma$, where $\Gamma = \{\sigma\in
\Ss_n:\, \sigma(n) = n\} \cong \Ss_{n-1}$ and $z = (1 2 \dots n)$, so that
$\langle z \rangle \cong C_n$.
This exact factorization
induces mutual actions by permutations $\Ss_{n-1} \overset{\lhd}\longleftarrow
\Ss_{n-1} \times C_n \overset{\rhd}\longrightarrow C_n$
that make  $(\Ss_{n-1}, C_n)$ into a matched pair of groups.
The actions $\lhd, \rhd$ are determined by the relations
\begin{equation}\label{rel-mp} \sigma c   = (\sigma \rhd c) (\sigma \lhd c),
 \end{equation}
for all $\sigma \in \Gamma$, $c \in \langle z \rangle$.

Suppose $n$ is odd, so that $\langle z \rangle \subseteq \Aa_n$. Relations
\eqref{rel-mp} imply that the subgroup $\Gamma_+ = \Gamma \cap \Aa_n \cong
\Aa_{n-1}$ is stable under the action $\lhd$ of $\langle z \rangle$.
Therefore the actions $\lhd, \rhd$ induce by restriction a matched pair
$(\Aa_{n-1}, C_n)$.

\begin{remark}\label{even-notstbl} Let $\sigma \in \Gamma$. It follows from \eqref{rel-mp} that  $\sigma z = z^r (\sigma \lhd z)$, for some $0 \leq r \leq n-1$. Since $\sigma \lhd z \in \Gamma$ then $(\sigma \lhd z)(n) = n$, implying that $r = b(n)$, where $b = \sigma z$.

Suppose that $n\geq 4$ is even and $\sigma \in \Gamma \cap \Aa_n$. Since $z$ is an odd permutation and $\sigma \lhd z = z^{-r}\sigma z$, then $\sigma \lhd z$ is even if and only if $r$ is odd. Letting $\sigma = (12\dots (n-1))\in \Gamma \cap \Aa_n$, we find that $r = b(n) = \sigma z(n) = 2$; so that $\sigma \lhd z$ is an odd permutation. This shows that the subgroup $\Gamma_+ = \Gamma \cap \Aa_n \cong \Aa_{n-1}$ is not stable under the action $\lhd$ of $\langle z \rangle$ in this case.
\end{remark}

\medbreak Let us consider the associated Hopf algebras $J_n= k^{\Ss_{n-1}} \# kC_n$ and $K_n= k^{\Aa_{n-1}} \# kC_n$.

The categories $\Rep J_n$, $\Rep K_n$ are Morita equivalent to the categories
$\Rep \Ss_n$ and $\Rep\Aa_n$, respectively; see Remark \ref{2-morita}. In particular, $\Rep
J_n$ and $\Rep
K_n$ are not solvable, for all $n \geq 5$.

Observe that $J_n^*$ is a split abelian extension of $k^{C_n}$ by
$k\Ss_{n-1}$ associated to the actions $\lhd'$ and $\rhd'$ in Remark
\ref{inv-split}.

\begin{remark} Suppose $n \geq
5$.  It follows from \cite[Theorem 5.2]{qt-bicrossed}, that the Hopf
algebras $J_n$, $K_n$, $J_n^*$, $K_n^*$  admit no quasitriangular structure.
In particular, the fusion categories $\Rep J_n$, $\Rep K_n$, $\Rep J_n^*$ and $\Rep K_n^*$ admit no
braiding.
\end{remark}

In addition, there are equivalences of braided fusion categories
\begin{equation}
\Rep D(J_n) \cong \Rep D(\Ss_n), \quad \Rep D(K_n) \cong \Rep D(\Aa_n).
\end{equation}
It follows from Corollary \ref{inv-doble} that there are group isomorphisms
$G(D(\Ss_n)^*) \cong \Ss_n/[\Ss_n, \Ss_n] \times Z(\Ss_n) \cong \Zz_2$, for all
$n \geq 3$, and similarly,
$G(D(\Aa_n)^*) = 1$, for all $n \geq 5$.

\begin{lemma}\label{tann-sn} The pointed subcategory $\Rep D(\Ss_n)_{pt}\cong \Rep \Zz_2$ is a Tannakian subcategory of $\Rep D(\Ss_n)$.
\end{lemma}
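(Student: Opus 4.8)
The plan is to combine the explicit parameterization of the simple objects of the Drinfeld double $\Rep D(\Ss_n)\cong\Z(\Rep\Ss_n)$ with the computation of its group of invertibles already carried out, and then pin down the braided structure on the rank-two pointed part.

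First I would record that, by the discussion preceding the lemma (a consequence of Corollary \ref{inv-doble}, using $\Ss_n/[\Ss_n,\Ss_n]\cong\Zz_2$ and $Z(\Ss_n)=\{e\}$ for $n\geq 3$), one has $G(\Rep D(\Ss_n))=G(D(\Ss_n)^*)\cong\Zz_2$. Hence $\Rep D(\Ss_n)_{pt}$ is a pointed braided fusion category with exactly two invertible objects, so $\Rep D(\Ss_n)_{pt}\cong\Rep\Zz_2$ as fusion categories. What remains is to show that its inherited braiding is symmetric and bosonic, so that the subcategory is genuinely Tannakian and not $\svect$ (nor a pointed modular category of rank two).

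The key step is to identify the nontrivial invertible object $\delta$ inside the standard model of $\Z(\Rep\Ss_n)$, whose simple objects are labeled by pairs $(O,\rho)$ with $O$ a conjugacy class of $\Ss_n$ and $\rho$ an irreducible representation of the centralizer $C_{\Ss_n}(g)$, $g\in O$, and with $\FPdim(O,\rho)=|O|\dim\rho$. The invertible objects are exactly those with $|O|=1$ and $\dim\rho=1$, that is $g\in Z(\Ss_n)=\{e\}$ and $\rho$ a linear character of $\Ss_n$; these are the unit $(\{e\},\epsilon)$ and $\delta=(\{e\},\mathrm{sgn})$. Both lie in the canonical copy of $\Rep\Ss_n$ inside $\Z(\Rep\Ss_n)$, namely the image of the pullback along the Hopf surjection $D(\Ss_n)\twoheadrightarrow k\Ss_n$, consisting of the objects supported on the trivial conjugacy class with half-braiding given by the group action. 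This copy is a Tannakian subcategory equivalent to $\Rep\Ss_n$ as a symmetric category. Since $\Rep D(\Ss_n)_{pt}$ is generated by $\uno$ and $\delta$, it is a fusion subcategory of this Tannakian subcategory, and any (symmetric) fusion subcategory of a Tannakian category is Tannakian, which gives the claim.

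Alternatively, and more explicitly, Tannakianity can be checked on the ribbon twist: for the untwisted double the twist of $(O,\rho)$ equals $\chi_\rho(g)/\dim\rho$, so $\theta_\delta=\mathrm{sgn}(e)/1=1$. A pointed braided fusion category on $\Zz_2$ is classified by the quadratic form $q(\delta)=\theta_\delta\in\{\pm1,\pm i\}$, and the value $\theta_\delta=1$ forces the symmetric, bosonic case $\Rep\Zz_2$, again yielding the statement. I expect the only point requiring care to be the identification of $\delta$ as the sign-representation object $(\{e\},\mathrm{sgn})$ (equivalently, that it sits in the transparent Tannakian copy of $\Rep\Ss_n$, or that its twist is $1$); once this is established the Tannakian conclusion is immediate.
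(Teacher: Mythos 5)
Your proposal is correct, and your primary argument takes a genuinely different route from the paper's. The paper begins exactly as you do, using the parameterization of the irreducible representations of the double to identify $\Rep D(\Ss_n)_{pt} \cong \Rep \Zz_2$ with nontrivial object $(e,\Sg)$; but it then verifies Tannakianity by direct computation, evaluating \emph{both} the $S$-matrix entry $s_{(e,\Sg),(e,\Sg)}=1$ and the twist $\theta_{(e,\Sg)}=1$ via the explicit formulas for $\Rep D(\Ss_n)$ in \cite[Section 3.1]{NNW} (the $S$-matrix computation shows the subcategory is symmetric, the twist then rules out $\svect$). Your main argument instead locates both invertibles inside the canonical copy of $\Rep\Ss_n$ sitting in $\Z(\Rep\Ss_n)$ --- the objects supported on the trivial conjugacy class, with half-braiding given by the group action --- which is Tannakian, and concludes because a fusion subcategory of a Tannakian category is again Tannakian. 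This is more structural: it avoids the $S$-matrix and twist formulas altogether, and it is in the spirit of the paper's own Lemma \ref{g-central}, i.e.\ of \cite[Proposition 2.10]{ENO2} applied to the equivariantization $\vect^{\Ss_n} = \Rep \Ss_n$. Your fallback argument is essentially the paper's proof, except that you compute only the twist and invoke the classification of braided pointed categories on $\Zz_2$ by quadratic forms in place of the $S$-matrix computation; that shortcut is legitimate, with the one caveat that the identification $q(\delta)=\theta_\delta$ (so that $\theta_\delta = 1$ excludes $\svect$ and the two semion categories) uses that the double carries its canonical ribbon structure, for which quantum dimensions are positive --- precisely the setting in which the formulas of \cite{NNW} are stated, so no gap arises.
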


\begin{proof} It follows from the description of the irreducible representations in \eqref{irrepns-bcpt}, that the one-dimensional representations of $D(\Ss_n)$ are parameterized by pairs $(s, U_s)$, where $s \in \Ss_n$ is a central element and $U_s$ is a one-dimensional representation of $\Ss_n$.
Since $Z(\Ss_n) = \{e\}$ and $\Ss_n$ has two non-isomorphic one dimensional representations trivial one and the sign representation $\Sg$, then $\Rep D(\Ss_n)_{pt}\cong \Rep \Zz_2$. Moreover, the  unique nontrivial element of $\Rep D(\Ss_n)_{pt}$ corresponds to the pair $(e, \Sg)$.
We have
$$s_{(e,\Sg),(e,\Sg)}= \frac{|\Ss_n|}{|\Ss_n|^2}\sum_{g\in \Ss_n} \Sg(e)\Sg(e) =\frac{|\Ss_n|}{|\Ss_n|}=1,$$
and
$$\theta_{(e,\Sg)}= \frac{\Sg(e)}{\deg \Sg}= 1,$$
 where $(s_{X, Y})_{X, Y \in \Irr(D(\Ss_n))}$ and $\theta$ denote the $S$-matrix and the ribbon structure of $\Rep D(\Ss_n)$, respectively.  See for instance \cite[Section 3.1]{NNW}.
 This shows that $\Rep D(\Ss_n)_{pt}$ is  a Tannakian subcategory, as claimed.
\end{proof}

\begin{lemma}\label{secjnkn} Let $n$ be an odd natural number. Then there is a central exact sequence of Hopf algebras
\begin{equation}\label{sec-jn}
k \longrightarrow k^{\Zz_2} \longrightarrow J_n \overset{\pi}\longrightarrow K_n
\longrightarrow k,
\end{equation}
where the map $\pi: J_n \to K_n$ is induced by the inclusion $\Aa_{n-1}
\subseteq \Ss_{n-1}$.
\end{lemma}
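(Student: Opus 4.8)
The plan is to write down the two structure maps explicitly and then to deduce exactness from a dimension count, using the freeness theorem for finite-dimensional Hopf algebras. Throughout I use that, since $n$ is odd, the subgroup $\Aa_{n-1} = \Gamma_+$ is stable under the action $\lhd$ of $C_n$, so that $K_n = k^{\Aa_{n-1}}\#kC_n$ is the split bicrossed product attached to the restricted matched pair $(\Aa_{n-1}, C_n)$. In particular both $J_n$ and $K_n$ carry the product $(e_s\#x)(e_t\#y) = \delta_{t,\,s\lhd x}\, e_s\#xy$ and the coproduct $\Delta(e_s\#x) = \sum_{gh=s} e_g\#(h\rhd x)\otimes e_h\#x$ of a split bicrossed product (trivial cocycles $\sigma=\tau=1$).

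First I would define $\pi\colon J_n\to K_n$ as restriction on the first tensorand, $\pi(e_s\#x) = e_s\#x$ for $s\in\Aa_{n-1}$ and $\pi(e_s\#x)=0$ otherwise, and verify that it is a surjective Hopf algebra map. Surjectivity and compatibility with unit and counit are immediate. Multiplicativity reduces, via the product formula, to the observation that for $s\in\Aa_{n-1}$ one has $s\lhd x\in\Aa_{n-1}$ (precisely the $\lhd$-stability of $\Aa_{n-1}$, valid because $n$ is odd), so that in $\pi(e_s\#x)\,\pi(e_t\#y)$ the condition $t = s\lhd x$ already forces $t\in\Aa_{n-1}$, matching $\pi\big((e_s\#x)(e_t\#y)\big)$. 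Comultiplicativity reduces, via the coproduct formula, to the fact that $\Aa_{n-1}$ has index two: in a factorization $gh=s$ with $s\in\Aa_{n-1}$ the factors $g,h$ have equal sign, so exactly the pairs with $g,h$ both even survive $\pi\otimes\pi$, and these are the terms occurring in the coproduct of $K_n$. A bialgebra map between Hopf algebras is automatically compatible with antipodes.

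Next I would define $\iota\colon k^{\Zz_2}\to J_n$ as the composite of the inflation $k^{\Zz_2}\hookrightarrow k^{\Ss_{n-1}}$ along the sign epimorphism $\operatorname{sgn}\colon\Ss_{n-1}\twoheadrightarrow\Zz_2$ with the canonical Hopf subalgebra inclusion $k^{\Ss_{n-1}}\hookrightarrow J_n$, $a\mapsto a\#1$. Its image is the two-dimensional Hopf subalgebra of elements $f\#1$ with $f = \sum_s c_s\, e_s$ and $c_s$ depending only on $\operatorname{sgn}(s)$. The essential point is that $\iota(k^{\Zz_2})$ is central: commutation with the commutative subalgebra $k^{\Ss_{n-1}}\#1$ (which contains $\iota(k^{\Zz_2})$) is automatic, and since $J_n$ is generated by $k^{\Ss_{n-1}}\#1$ together with the elements $1\#x$ it remains to compare $(f\#1)(1\#x) = \sum_s c_s\, e_s\#x$ with $(1\#x)(f\#1) = \sum_s c_{s\lhd x}\, e_s\#x$; centrality amounts to $c_{s\lhd x}=c_s$, which holds because $\lhd x$ permutes $\Aa_{n-1}$ and therefore preserves the sign.

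Finally I would assemble the exact sequence. A short computation with the coproduct gives, for $f\#1\in\iota(k^{\Zz_2})$, $(\pi\otimes\id)\Delta(f\#1) = \sum_{g\in\Aa_{n-1},\,h} c_{gh}\, e_g\#1\otimes e_h\#1$; since $c_{gh}=c_h$ whenever $g$ is even, this equals $1_{K_n}\otimes(f\#1)$, so $\iota(k^{\Zz_2})\subseteq J_n^{\co\pi}$. By the standard theory of finite-dimensional Hopf algebra surjections, $J_n$ is faithfully flat over $J_n^{\co\pi}$ and $\dim J_n = \dim J_n^{\co\pi}\cdot\dim K_n$, whence $\dim J_n^{\co\pi} = n!/(n!/2) = 2 = \dim\iota(k^{\Zz_2})$; therefore $\iota(k^{\Zz_2}) = J_n^{\co\pi}$ and $k\to k^{\Zz_2}\to J_n\to K_n\to k$ is exact, and central because $\iota(k^{\Zz_2})$ is central. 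I expect the multiplicativity of $\pi$ (equivalently the centrality of $\iota$) to be the main point, since it is exactly here that the hypothesis that $n$ is odd is indispensable: for even $n$ the same $\pi$ is not an algebra map, because by Remark \ref{even-notstbl} there exist $s\in\Aa_{n-1}$ and $x$ with $s\lhd x\notin\Aa_{n-1}$, and then $\pi\big((e_s\#x)(e_{s\lhd x}\#y)\big) = e_s\#xy\neq 0 = \pi(e_s\#x)\,\pi(e_{s\lhd x}\#y)$.
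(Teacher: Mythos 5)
Your proof is correct, and its skeleton is the same as the paper's: you construct the same map $\pi = j \otimes \id$ induced by restriction along $\Aa_{n-1} \subseteq \Ss_{n-1}$, and you identify the coinvariant subalgebra $J_n^{\co\pi}$ as a two-dimensional central copy of $k^{\Zz_2}$. The difference lies in how the two key points are handled. Where you verify by hand that $\pi$ is a Hopf algebra map (multiplicativity via the $\lhd$-stability of $\Aa_{n-1}$, comultiplicativity via the index-two sign argument), the paper simply asserts it; your computation has the merit of making explicit that this is precisely where the hypothesis that $n$ is odd enters. Where you prove centrality of the sign-inflated copy of $k^{\Zz_2}$ directly (from $c_{s \lhd x} = c_s$, i.e.\ the action $\lhd x$ preserves the sign) and then pin down $J_n^{\co\pi}$ by a dimension count using freeness/faithful flatness, the paper instead invokes the general result \cite[Corollary 1.4.3]{ssld} that, the index of $K_n$ in $J_n$ being $2$, the coinvariants $J_n^{\co\pi} \cong k^{\Zz_2}$ are \emph{automatically} central. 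So your argument is self-contained and pinpoints the role of the structural hypotheses, while the paper's is shorter and rests on a general index-two centrality theorem that applies without knowing any explicit description of the coinvariants; both are complete proofs of the statement.
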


\begin{proof} The map $\pi$ is defined in the form $\pi = j \otimes \id: J_n = k^{\Ss_{n-1}} \# kC_n
\to K_n = k^{\Aa_{n-1}} \# kC_n$, where $j: k^{\Ss_{n-1}}
\to k^{\Aa_{n-1}}$ is the canonical Hopf algebra map. Then $\pi$ is a surjective Hopf algebra map.

Since the index of $K_n$ in  $J_n$ is $2$, then $J_n^{\co \pi} \cong k\Zz_2 \cong k^{\Zz_2}$ is a necessarily central Hopf subalgebra; see \cite[Corollary 1.4.3]{ssld}.
\end{proof}

\begin{proposition} \label{RepJn} \textit{(i)} $\Rep J_n$ is a $\Zz_2$-extension of $\Rep K_n$, for all odd natural number $n \geq 1$.

\textit{(ii)} $\Rep J_n$ is a $\Zz_2$-equivariantization of a fusion category $\D$, for all even natural number $n\geq 4$.
\end{proposition}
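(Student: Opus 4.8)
The statement has two parts; I treat them in turn.

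For (i), with $n$ odd, I would feed the central exact sequence of Hopf algebras $k\to k^{\Zz_2}\to J_n\to K_n\to k$ (with surjection $\pi\colon J_n\to K_n$) from Lemma~\ref{secjnkn} into the standard dictionary between central extensions and graded extensions. Concretely, the two primitive idempotents of the central Hopf subalgebra $k^{\Zz_2}\subseteq J_n$ are central, so by Schur's lemma each acts by $0$ or $1$ on a given simple object; since the comultiplication of $k^{\Zz_2}$ is that of the group algebra of $\Zz_2$, this assignment is multiplicative and defines a $\Zz_2$-grading $\Rep J_n=(\Rep J_n)_0\oplus(\Rep J_n)_1$. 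The neutral component is the full subcategory of objects killed by the augmentation ideal $(k^{\Zz_2})^+J_n$, which is exactly the inflation of $\Rep K_n$ along $\pi$, so $(\Rep J_n)_0\cong\Rep K_n$; and $\FPdim\Rep J_n=\dim J_n=2\dim K_n$ forces $(\Rep J_n)_1\neq 0$, so the grading is faithful. Hence $\Rep J_n$ is a $\Zz_2$-extension of $\Rep K_n$.

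For (ii), with $n\ge 4$ even, the plan is to exhibit a central Tannakian copy of $\Rep\Zz_2$ inside $\Rep J_n$ and then de-equivariantize. Since $\Z(\Rep J_n)\cong\Rep D(J_n)\cong\Rep D(\Ss_n)$ as braided fusion categories, Lemma~\ref{tann-sn} furnishes a Tannakian subcategory $\E:=\Z(\Rep J_n)_{pt}\cong\Rep\Zz_2$, with nontrivial simple object $g$, necessarily of order two. Composing the braided inclusion $\E\hookrightarrow\Z(\Rep J_n)$ with the forgetful central functor $F\colon\Z(\Rep J_n)\to\Rep J_n$ produces a central functor $\Rep\Zz_2\to\Rep J_n$. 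If $F(g)\not\cong\uno$ this functor is fully faithful, its image is a central Tannakian subcategory of $\Rep J_n$ equivalent to $\Rep\Zz_2$, and de-equivariantizing $\Rep J_n$ by it yields a fusion category $\D$ with $\Rep J_n\cong\D^{\Zz_2}$. So everything reduces to showing $F(g)\not\cong\uno$.

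To control $F(g)$ I would use the standard fact that an order-two invertible object $g$ of $\Z(\C)$ with $F(g)\cong\uno$ determines a faithful $\Zz_2$-grading of $\C$ (the invertibles of the center trivialised by $F$ are precisely those lying in the canonical copy of $\Rep(U(\C))$, with $U(\C)$ the universal grading group). Thus it suffices to prove that $\Rep J_n$ carries no nontrivial $\Zz_2$-grading, i.e. that $J_n$ has no central grouplike of order two. Here the parity of $n$ is decisive: a grouplike computation — e.g. Lemma~\ref{inv-abel} applied to the abelian extension $J_n^*$, using that no nonidentity power of $z$ is fixed by all of $\Ss_{n-1}$ — gives $G(J_n)=\widehat{\Ss_{n-1}}=\{1,\varepsilon\}\cong\Zz_2$, where $\varepsilon=\sum_{s\in\Ss_{n-1}}\operatorname{sgn}(s)\,e_s\#1\in k^{\Ss_{n-1}}\subseteq J_n$. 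Commuting $\varepsilon$ past $1\#z$ shows that $\varepsilon$ is central in $J_n$ if and only if $\operatorname{sgn}(s)=\operatorname{sgn}(s\lhd z)$ for all $s$, i.e. if and only if $\lhd$ preserves the sign, equivalently $\Aa_{n-1}$ is $\lhd$-stable. By Remark~\ref{even-notstbl} this fails for $n$ even, so $J_n$ has no nontrivial central grouplike, $\Rep J_n$ has no nontrivial $\Zz_2$-grading, and therefore $F(g)\not\cong\uno$.

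The hard part is this last reduction, and what makes it delicate is that, even for $n$ even, $J_n^*$ has no central grouplike and hence $\Rep J_n$ admits no cocentral Hopf surjection onto $k\Zz_2$; the central copy of $\Rep\Zz_2$ cannot be produced as the inflation of a Hopf-algebra quotient and is only visible inside the Drinfeld center. This is exactly why the criterion must be phrased through the forgetful functor $F$ and the universal grading group rather than through a Hopf subalgebra of $J_n$, and why the single input from Remark~\ref{even-notstbl} — the failure of $\Aa_{n-1}$ to be $\lhd$-stable — is precisely what separates the extension case ($n$ odd, part (i)) from the equivariantization case ($n$ even, part (ii)).
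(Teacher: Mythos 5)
Your proposal is correct and is essentially the paper's own proof: part (i) is the central exact sequence of Lemma \ref{secjnkn} combined with the dictionary between central Hopf subalgebras $k^{\Zz_2}\subseteq J_n$ and $\Zz_2$-gradings (the paper simply cites \cite[Theorem 3.8]{gel-nik} where you rederive it by hand), and part (ii) uses exactly the paper's three ingredients, namely the Tannakian subcategory $\Rep D(\Ss_n)_{pt}$ of Lemma \ref{tann-sn}, the extension-versus-equivariantization dichotomy for a central copy of $\Rep\Zz_2$ (the paper cites \cite[Propositions 2.9 and 2.10]{ENO2}, you phrase it via the forgetful functor and the universal grading group), and the computation that the sign grouplike $\varphi=\sum_{\sigma}\Sg(\sigma)e_\sigma$ fails to be central in $J_n$ for even $n$ because $\Aa_{n-1}$ is not $\lhd$-stable (Remark \ref{even-notstbl}). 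The reorganization of part (ii) — proving $F(g)\not\cong\uno$ directly rather than first ruling out the $\Zz_2$-extension case — is cosmetic, and the underlying argument is the same.
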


\begin{proof}
\textit{(i)}
This is an immediate consequence of Lemma \ref{secjnkn}. That is, since the sequence \eqref{sec-jn} is a central exact sequence of Hopf algebras, then $\Rep (J_n)$ is an $\Zz_2$-graded fusion category with trivial component $(\Rep (J_n))_0 = \Rep(K_n)$ \cite[Theorem 3.8]{gel-nik}. Therefore $\Rep J_n$ is a $\Zz_2$-extension of $\Rep K_n$.

\medbreak \textit{(ii)} Suppose that $n \geq 4$ is even. We first claim that $\Rep J_n$ is not a $\Zz_2$-extension of any fusion category.  To see this, first note that it follows from \cite[Lemma 3.4]{jm} that $G(J_n) = G(k^{\Ss_{n-1}}) \cong \Zz_2$, for all $n \geq 2$. Suppose that $K$ is a central Hopf subalgebra of $J_n$ such that $K \cong k^{\Zz_2}$, then $K$ must necessarily coincide with $kG(J_n)$.
 Observe that the action $\lhd: \Ss_{n-1} \times C_n \to \Ss_{n-1}$ gives rise to an action by algebra automorphism
 $$\rightharpoonup: C_n \times k^{\Ss_{n-1}} \to k^{\Ss_{n-1}} \ \ \text{such that} \ \ x \rightharpoonup e_\sigma = e_{\sigma \lhd x^{-1}},$$
 for all $x\in C_n$ and $\sigma \in \Ss_{n-1}$. If $\epsilon\neq\varphi\in G(J_n) = G(k^{\Ss_{n-1}})$, we have $\varphi(\sigma)=\Sg(\sigma)$, for all $\sigma\in\Ss_{n-1}$ and $\varphi = \sum_{\sigma\in\Ss_{n-1}}\Sg(\sigma)e_\sigma$.

Then  $G(J_n)\subseteq Z(J_n)$ if and only if $z\rightharpoonup\varphi=\varphi$, if and only if $\Aa_{n-1}$ is stable under the action $\lhd$ of $\langle z \rangle$. This contradicts the observation in Remark \ref{even-notstbl}. Hence $\Rep J_n$ is not a $\Zz_2$-extension of any fusion category, as claimed.

\medbreak Let $\E = \Rep D(\Ss_n)_{pt}$. By Lemma \ref{tann-sn}, $\E\cong \Rep \Zz_2$ is a Tannakian subcategory of $\Rep D(\Ss_n)$. Since $\Rep J_n$ is not a $\Zz_2$-extension of any fusion category, it follows from \cite[Propositions 2.9 and 2.10]{ENO2} that $\Rep J_n$ must be a $\Zz_2$-equiva\-riantization of a fusion category $\D$. We thus obtain (ii).
\end{proof}

\begin{lemma}\label{not-ext} Let $n \geq 5$ be an odd natural number and let $q$ be a prime number. Then
the following hold:

(i) The category $\Rep J_n$ is not a $\Zz_q$-equivariantization of any fusion
category.

(ii) Suppose that $\Rep J_n$ is a $\Zz_q$-extension of a fusion category $\tilde
\D$.
Then $q = 2$ and $\tilde \D = \Rep K_n$.

(iii) The category $\Rep K_n$ is neither a $\Zz_q$-equivariantization nor a
$\Zz_q$-extension of any fusion category. \end{lemma}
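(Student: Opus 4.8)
The plan is to handle the three parts through Lemma \ref{g-central} together with the correspondence, used in Proposition \ref{RepJn}, between gradings of $\Rep H$ and central group-like elements of $H$. For (i), suppose $\Rep J_n\cong\D^{\Zz_q}$. By Lemma \ref{g-central}, $\widehat{\Zz_q}\cong\Zz_q$ would embed into the center of $G(\Rep J_n)$, so it suffices to show this center is trivial. Since $J_n=k^{\Ss_{n-1}}\#kC_n$ is a split abelian extension, Lemma \ref{inv-abel} and Remark \ref{inv-split} present $G(\Rep J_n)$ as $\widehat{C_n}\rtimes\Gamma_0$ with $\Gamma_0=(\Ss_{n-1})^{C_n}$. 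The $\lhd$-fixed points are exactly the permutations fixing $n$ and normalizing $\langle z\rangle$, namely the affine maps $\sigma_a$ with $\sigma_a z\sigma_a^{-1}=z^a$, $a\in(\Zz/n)^\times$; the induced action on $\widehat{C_n}$ is by multiplication, so $G(\Rep J_n)$ is the affine group $\Zz_n\rtimes(\Zz/n)^\times$. As $n$ is odd, $-1\in(\Zz/n)^\times$ fixes only $0$, whence this group has trivial center and (i) follows for all $q$.

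For (ii), a $\Zz_q$-extension of $\Rep J_n$ is a faithful $\Zz_q$-grading, which by \cite{gel-nik} (cf. Lemma \ref{secjnkn}) corresponds to a central Hopf subalgebra $k^{\Zz_q}\subseteq J_n$, i.e. to a central group-like element of order $q$. As in the proof of Proposition \ref{RepJn}, for odd $n$ one has $G(J_n)\cong\Zz_2$ and $G(J_n)\subseteq Z(J_n)$, so the central group-likes form a group of order $2$. Hence $q=2$, the subalgebra is necessarily $kG(J_n)\cong k^{\Zz_2}$, and the grading is the one of Lemma \ref{secjnkn}, whose trivial component is $\Rep K_n$; this gives $\tilde\D=\Rep K_n$.

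For (iii) the two assertions require different obstructions. For non-equivariantization I would not use the center of $G(\Rep K_n)$ (which may be nontrivial, e.g. $\cong\Zz_3$ for $n=27$) but the invertibles of the Drinfeld center: if $\Rep K_n\cong\D^{\Zz_q}$, the proof of Lemma \ref{g-central} places a Tannakian copy of $\Rep\Zz_q$ inside $\Z(\Rep K_n)$, forcing $G(\Z(\Rep K_n))\neq 1$. But $\Z(\Rep K_n)\cong\Rep D(\Aa_n)$, and Corollary \ref{inv-doble} gives $G(\Rep D(\Aa_n))\cong\Aa_n/[\Aa_n,\Aa_n]\times Z(\Aa_n)=1$ because $\Aa_n$ is simple for $n\geq5$; this excludes every $q$. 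For non-extension I return to central group-likes: these are precisely the $\lhd$-invariant characters of $\Aa_{n-1}$. For $n\geq7$ there are none nontrivial since $\Aa_{n-1}$ is perfect; for $n=5$, $\widehat{\Aa_4}\cong\Zz_3$, but the order-$3$ characters are not $\lhd$-invariant (a direct check). So $Z(K_n)\cap G(K_n)$ is trivial and $\Rep K_n$ admits no nontrivial grading.

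The main difficulty is the conceptual asymmetry between $\Rep J_n$ and $\Rep K_n$: the non-equivariantization of $\Rep J_n$ is invisible to $G(\Z(\Rep J_n))\cong\Zz_2$ (which only excludes $q\neq2$) and must be read from the trivial center of $G(\Rep J_n)$, whereas that of $\Rep K_n$ is invisible to $Z(G(\Rep K_n))$ and must be read from the triviality of $G(\Z(\Rep K_n))$. This reflects the difference between $\Ss_n/[\Ss_n,\Ss_n]\cong\Zz_2$ and $\Aa_n/[\Aa_n,\Aa_n]=1$. The remaining delicate point is the case $n=5$, where $\widehat{\Aa_4}$ is nontrivial and the non-centrality of the candidate group-likes must be verified by hand.
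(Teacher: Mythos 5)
Your proof is correct, but it follows a genuinely different route from the paper's. The paper treats all three parts uniformly through the Drinfeld center: by \cite[Propositions 2.9 and 2.10]{ENO2}, a $\Zz_q$-equivariantization (resp.\ $\Zz_q$-extension) structure on $\C$ produces a Tannakian subcategory $\Rep \Zz_q \subseteq \Z(\C)$ that embeds into $\C$ (resp.\ maps to $\vect$) under the forgetful functor; since such a subcategory is pointed, part (iii) follows at once from $G(\Z(\Rep K_n)) \cong G(\Rep D(\Aa_n)) = 1$, while for $\Rep J_n$ the unique order-two pointed subcategory of $\Z(\Rep J_n) \cong \Rep D(\Ss_n)$ is already accounted for by the $\Zz_2$-extension of Proposition \ref{RepJn}(i), hence maps to $\vect$ and cannot simultaneously embed into $\C$; this gives (i) and (ii) together. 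You coincide with this argument only in the equivariantization half of (iii). For (i) you instead combine Lemma \ref{g-central} with the computation that $G(\Rep J_n) \cong \widehat{C_n} \rtimes (\Zz/n)^{\times}$ is the holomorph of $\Zz_n$, whose center is trivial for odd $n$; this is a nice by-product, since it extends Lemma \ref{inv-sdp} and Proposition \ref{triv-cent} (stated in the paper only for primes $p$) to all odd $n$, and your remark that $Z(G(\Rep K_n))$ can be nontrivial (e.g.\ $n=27$) correctly identifies why this obstruction cannot work for $K_n$. For (ii) and the extension half of (iii) you use the Gelaki--Nikshych correspondence between faithful gradings of $\Rep H$ and central group-like elements of $H$, which is also sound: $G(J_n)\cap Z(J_n) \cong \Zz_2$ forces $q=2$ and $\tilde\D = (\Rep J_n)_{ad} = \Rep K_n$. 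What the paper's route buys is economy: it needs no case analysis, whereas your treatment of $K_n$ requires both the identification of the group-likes of $K_n$ with $\widehat{\Aa_{n-1}}$ (this implicitly uses that $C_n$ has no nontrivial $\rhd$-invariants under $\Aa_{n-1}$, as in the proof of Lemma \ref{grupos-deg2}) and a separate verification at $n=5$; the latter you assert but do not carry out. It does hold, and can be seen from the explicit orbit $\mathcal O = \{(123),(243),(132),(13)(24),(234)\}$: a $\lhd$-invariant character of $\Aa_4$ is constant on $\mathcal O$, so it takes the same value on $(123)$ and on $(132)=(123)^{-1}$, forcing a real cube root of unity, i.e.\ the value $1$, on all $3$-cycles, whence the character is trivial. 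You should include this check to close the argument.
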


\begin{proof} Let $\C$ be one of the categories $\Rep J_n$ or $\Rep K_n$. If
$\C$ is a $\Zz_q$-extension of a fusion category $\tilde \D$, then the Drinfeld
center $\Z(\C)$ must contain a Tannakian subcategory equivalent to $\Rep \Zz_q$
which maps to the trivial fusion subcategory $\vect \subseteq \C$ under the
forgetful functor $U:\Z(\C) \to \C$. In this case, the category $\tilde \D$ is
canonically
determined by the corresponding Tannakian subcategory.
Dually, if $\C$ is a $\Zz_q$-equivariantization, then $\Z(\C)$ must contain a
Tannakian subcategory equivalent to $\Rep \Zz_q$
which embeds into $\C$ under the forgetful functor. See \cite[Propositions 2.9
and 2.10]{ENO2}.

Since $n \geq 5$, then the group of invertible objects of the Drinfeld center of
$\C$ coincides with $\Zz_2$ if $\C = \Rep J_n$, and it is trivial if $\C = \Rep
K_n$. Since $\Rep \Zz_q$ is a pointed fusion category, we get (iii).

Suppose that $\C = \Rep J_n$. Then $\C$ is a $\Zz_2$-extension of $\Rep K_n$.
This implies that the pointed subcategory of $\Z(\C)$ is a Tannakian subcategory
which maps to the trivial subcategory of $\C$ under the forgetful functor.
Hence, for every prime number $q$, $\C$ is not a $\Zz_q$-equivariantization of
any fusion category and we get (i).
On the other hand, if it is $\Zz_q$-extension, then $q = 2$ and the
corresponding Tannakian subcategory of $\Z(\C)$ coincides with the pointed
subcategory $\Z(\C)_{pt}$. Thus we obtain (ii).
This finishes the proof of the lemma.
\end{proof}

\begin{lemma}\label{inv-sdp} Let $p$ be an odd prime number. Then the group
$G(J_p^*)$ is a
semidirect product $\widehat{C_p} \rtimes \langle \sigma \rangle$ where $\sigma
\in \Ss_{p-1}$ is a $(p-1)$-cycle and the action of
$\langle \sigma \rangle$ on $\widehat{C_p}$ is induced by the action $\lhd': C_p
\times \Ss_{p-1} \to C_p$.
Moreover, the subgroup $G(K_p^*) \subseteq G(J_p^*)$ is the semidirect product
$\widehat{C_p} \rtimes \langle \sigma^2 \rangle$.
\end{lemma}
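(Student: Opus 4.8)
The plan is to deduce the result from Lemma \ref{inv-abel} together with Remark \ref{inv-split}, which reduces the computation of $G(J_p^*)$ to the determination of a fixed-point subgroup of $\Ss_{p-1}$, and then to identify that subgroup explicitly inside $\Ss_p$. Recall from the discussion preceding the lemma that $J_p = k^{\Ss_{p-1}} \# kC_p$, so that $J_p^*$ is a \emph{split} abelian extension of $k^{C_p}$ by $k\Ss_{p-1}$ associated to the actions $\lhd', \rhd'$. First I would invoke Remark \ref{inv-split}, applied to $H = J_p$ with $F = C_p$ and $\Gamma = \Ss_{p-1}$: since the extension is split, all the restricted cocycles $\sigma_s$ are trivial, so the subgroup $\Gamma_0$ of Lemma \ref{inv-abel} is the \emph{full} fixed-point subgroup $\Gamma_0 = (\Ss_{p-1})^{C_p} = \{\sigma \in \Gamma : \sigma \lhd z = \sigma\}$, and $G(J_p^*) = G(\Rep J_p) \cong \widehat{C_p} \rtimes \Gamma_0$ with $\Gamma_0$ acting on $\widehat{C_p}$ through $\lhd'$. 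Thus everything comes down to computing $\Gamma_0$.

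The core step is the identification of $\Gamma_0$. Using the defining relation $\sigma z = (\sigma \rhd z)(\sigma \lhd z)$ of \eqref{rel-mp} and the uniqueness of the factorization $\Ss_p = \langle z\rangle\,\Gamma$, the condition $\sigma \lhd z = \sigma$ is equivalent to $\sigma z \sigma^{-1} \in \langle z\rangle$, that is, to $\sigma \in N_{\Ss_p}(\langle z\rangle)$. Hence $\Gamma_0 = \Gamma \cap N_{\Ss_p}(\langle z\rangle)$. Identifying $\{1,\dots,p\}$ with $\Zz/p$ so that $z$ becomes the translation $i \mapsto i+1$ and $p \mapsto 0$, the normalizer $N_{\Ss_p}(\langle z\rangle)$ is the affine group $\{\, i \mapsto ai + b : a \in \mathbb{F}_p^\times,\ b \in \mathbb{F}_p \,\}$ (conjugation by $i \mapsto ai+b$ sends $z$ to $z^a$). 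Intersecting with $\Gamma = \operatorname{Stab}(p) = \operatorname{Stab}(0)$ forces $b = 0$, leaving the multiplicative group $\{\, i \mapsto ai \,\} \cong \mathbb{F}_p^\times$, cyclic of order $p-1$ and generated by $\sigma \colon i \mapsto gi$ for a primitive root $g \bmod p$. Viewed as a permutation of $\{1,\dots,p-1\}$ fixing $p$, this $\sigma$ is a single $(p-1)$-cycle, which gives the first assertion $G(J_p^*) \cong \widehat{C_p} \rtimes \langle\sigma\rangle$.

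For the \emph{moreover} part, the inclusion $\Aa_{p-1}\subseteq\Ss_{p-1}$ dualizes the surjection $\pi\colon J_p \to K_p$ of Lemma \ref{secjnkn} to an injective Hopf algebra map $K_p^* \hookrightarrow J_p^*$, whence $G(K_p^*) \subseteq G(J_p^*)$. Since $p$ is odd, the matched pair restricts to $(\Aa_{p-1}, C_p)$, and the same argument with $\Aa_{p-1}$ in place of $\Ss_{p-1}$ yields $G(K_p^*) \cong \widehat{C_p} \rtimes \Gamma_0'$ with $\Gamma_0' = \Aa_{p-1}\cap \Gamma_0 = \Aa_{p-1}\cap\langle\sigma\rangle$. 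Because $p-1$ is even, the $(p-1)$-cycle $\sigma$ is an \emph{odd} permutation, so $\operatorname{sgn}$ restricts to a surjection $\langle\sigma\rangle \to \{\pm 1\}$ with kernel $\langle\sigma^2\rangle$; therefore $\Gamma_0' = \langle\sigma^2\rangle$ and $G(K_p^*) \cong \widehat{C_p} \rtimes \langle\sigma^2\rangle$, as claimed.

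The step requiring the most care, and the main potential obstacle, is the translation of the abstract matched-pair action $\lhd$ into conjugation inside $\Ss_p$, so that the affine-group description of $N_{\Ss_p}(\langle z\rangle)$ becomes available; one must also make sure that splitness of the extension is what guarantees $\Gamma_0$ equals the entire fixed-point group $(\Ss_{p-1})^{C_p}$ rather than the possibly smaller subgroup cut out by a nontrivial cocycle condition in Lemma \ref{inv-abel}. Once these two points are settled, the remaining computations are elementary.
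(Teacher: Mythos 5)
Your proposal is correct, and its overall skeleton coincides with the paper's: both reduce the lemma, via Lemma \ref{inv-abel} and the splitness observation of Remark \ref{inv-split}, to computing the invariant subgroup $\Ss_{p-1}^{C_p}$ under $\lhd$, and both handle the \emph{moreover} part by intersecting with $\Aa_{p-1}$ and using that a $(p-1)$-cycle is an odd permutation when $p$ is odd. The genuine difference lies in how the invariant subgroup is identified. The paper simply quotes \cite[Corollary 5.2]{jm} for the fact that $\Ss_{p-1}^{C_p}$ is cyclic generated by a $(p-1)$-cycle (after observing that $\lhd$- and $\rhd'$-invariants coincide), whereas you prove this from scratch: you use the relation \eqref{rel-mp} and uniqueness of the exact factorization $\Ss_p = \langle z \rangle \Gamma$ to show that $\sigma \lhd z = \sigma$ is equivalent to $\sigma \in N_{\Ss_p}(\langle z \rangle)$, then identify $N_{\Ss_p}(\langle z \rangle)$ with the affine group $\{i \mapsto ai + b\}$ of $\Zz/p$ and intersect with the stabilizer of the point $p$ to obtain the multiplicative group $\mathbb{F}_p^{\times}$, whose generator (multiplication by a primitive root) is visibly a $(p-1)$-cycle on $\{1,\dots,p-1\}$. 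Your route buys self-containedness and a conceptual explanation of \emph{why} the fixed-point group is cyclic of order $p-1$ (it is a point stabilizer inside the normalizer of a Sylow $p$-subgroup), at the cost of having to justify the standard normalizer description (the converse inclusion follows because the centralizer of a $p$-cycle in $\Ss_p$ is $\langle z \rangle$ itself, a point you state rather than prove, but which is standard); the paper's route is shorter but leans on an external reference. Both then conclude identically: $\Aa_{p-1}^{C_p} = \Aa_{p-1} \cap \langle \sigma \rangle = \langle \sigma^2 \rangle$ since $\operatorname{sgn}(\sigma^k) = (-1)^k$.
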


\begin{proof} The subgroup $\Ss_{p-1}^{C_p}$ of invariants of  $\Ss_{p-1}$ under
the action $\rhd'$ of $C_p$ coincides with the subgroup of invariants under the
action $\lhd$. It follows from \cite[Corollary 5.2]{jm} that $\Ss_{p-1}^{C_p}$
is cyclic generated by a $(p-1)$ cycle $\sigma$,
i.e. $\Ss_{p-1}^{C_p}=\langle \sigma \rangle$ and therefore $G(J_p^*)\cong \widehat{C_p} \rtimes \langle \sigma \rangle$.
It follows from this that the invariant subgroup $\Aa_{p-1}^{C_p}$ is also
cyclic generated by $\sigma^2$.
This implies the lemma, in view of Remark \ref{inv-split}.
\end{proof}

\begin{proposition}\label{triv-cent} Let $p$ be an odd prime number. Then the
Hopf algebras
$J_p$, $K_p$ satisfy the following properties:

(i) $\cd(J_p) = \cd (K_p) = \{1, p \}$.

(ii) The groups $G(J_p^*)$ and $G(K_p^*)$ have trivial centers.
\end{proposition}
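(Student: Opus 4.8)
The plan is to treat the two statements separately, using the classification of irreducible representations of a split bicrossed product for (i) and the explicit semidirect-product description of Lemma \ref{inv-sdp} for (ii).

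For part (i), I would start from the parametrization \eqref{irrepns-bcpt} of the simple objects of $\Rep J_p$. Since the extension $J_p = k^{\Ss_{p-1}}\#kC_p$ is split, all the cocycles $\sigma_s$ are trivial, so each simple object $W_{(s,U_s)}$ is indexed by a $C_p$-orbit of $s\in\Ss_{p-1}$ together with an \emph{ordinary} irreducible representation $U_s$ of the stabilizer $F_s=(C_p)_s$. As $F_s$ is a subgroup of the cyclic group $C_p$ of prime order, either $F_s=C_p$ or $F_s=\{e\}$, and in both cases $U_s$ is one-dimensional because $C_p$ is abelian. The dimension formula $\dim W_{(s,U_s)}=[C_p:F_s]\dim U_s$ then forces $\dim W_{(s,U_s)}\in\{1,p\}$, so $\cd(J_p)\subseteq\{1,p\}$. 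Both values are attained: the orbit of $s=e$ is fixed, giving one-dimensional objects, while the action $\lhd$ of $C_p$ on $\Ss_{p-1}$ is nontrivial (as recorded around Remark \ref{even-notstbl} and in \cite{jm}, the invariant subgroup $\Ss_{p-1}^{C_p}=\langle\sigma\rangle$ is proper), so there is an orbit of size $p$, producing a simple object of dimension $p$. For $K_p$ the same argument applies verbatim once we recall that for $p$ odd the subgroup $\Aa_{p-1}$ is stable under $\lhd$, so that $K_p=k^{\Aa_{p-1}}\#kC_p$ is again a split bicrossed product with group $C_p$ of prime order acting nontrivially, its invariant subgroup being $\langle\sigma^2\rangle$.

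For part (ii), I would invoke Lemma \ref{inv-sdp}, which identifies $G(J_p^*)\cong\widehat{C_p}\rtimes\langle\sigma\rangle$ and $G(K_p^*)\cong\widehat{C_p}\rtimes\langle\sigma^2\rangle$, where $\sigma\in\Ss_{p-1}$ is a $(p-1)$-cycle. The key observation is that $\sigma$ normalizes $\langle z\rangle=C_p$ inside $\Ss_p$: the relation \eqref{rel-mp} shows that $\sigma\lhd z=\sigma$ is equivalent to $\sigma z\sigma^{-1}=\sigma\rhd z\in\langle z\rangle$, so the induced action of $\langle\sigma\rangle$ on $\widehat{C_p}\cong\Zz_p$ is dual to conjugation $z\mapsto\sigma z\sigma^{-1}$. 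Hence $\widehat{C_p}\rtimes\langle\sigma\rangle$ is precisely the normalizer $N_{\Ss_p}(\langle z\rangle)$, the affine group of order $p(p-1)$, in which the complement $\langle\sigma\rangle$ acts on $C_p$ faithfully, realizing all of $\Aut(C_p)\cong\Zz_{p-1}$. I would then record the elementary fact that for a semidirect product $\Zz_p\rtimes H$ with $H$ acting faithfully and nontrivially the center is trivial: an element $(n,h)$ commuting with $\widehat{C_p}$ forces $h$ to act trivially, hence $h=e$ by faithfulness, and then commuting with $H$ forces $n\in(\Zz_p)^H=\{0\}$, since a nontrivial automorphism of $\Zz_p$ fixes only $0$. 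Applying this to $H=\langle\sigma\rangle$ gives $Z(G(J_p^*))=1$, and applying it to $H=\langle\sigma^2\rangle$, which acts through the squares in $\Aut(\Zz_p)$ and is therefore still faithful and nontrivial, gives $Z(G(K_p^*))=1$.

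The main obstacle, and the point requiring care, is the faithfulness and nontriviality of the induced action on $\widehat{C_p}$. For $J_p$ this is clean, because $\langle\sigma\rangle$ has order $p-1=|\Aut(\Zz_p)|$ and surjects onto $\Aut(\Zz_p)$ via the primitive-root action coming from the normalizer of a $p$-cycle in $\Ss_p$. The delicate case is $K_p$, where one must pass to the index-two subgroup $\langle\sigma^2\rangle$ acting through the quadratic residues modulo $p$; this action remains faithful and nontrivial exactly when $(p-1)/2>1$, so the argument for $K_p$ is genuinely a statement about odd primes $p\geq 5$. I would also make sure to justify that the transpose of the faithful conjugation action of $\langle\sigma\rangle$ on $C_p$ is again faithful on the dual $\widehat{C_p}$, which holds for any finite abelian group.
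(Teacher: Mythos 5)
Your proposal is correct, and for part (i) it is essentially the paper's argument with the details filled in: the paper simply cites the Montgomery--Witherspoon classification \cite{MW}, while you spell out why the split cocycle, the prime-order stabilizers $F_s\subseteq C_p$, and the dimension formula $\dim W_{(s,U_s)}=[C_p:F_s]\dim U_s$ force $\cd\subseteq\{1,p\}$, with attainment from the properness of the invariant subgroup. For part (ii) you take a genuinely different route to the key facts. Both you and the paper reduce via Lemma \ref{inv-sdp} to computing the center of $\widehat{C_p}\rtimes\langle\sigma\rangle$ (resp.\ $\widehat{C_p}\rtimes\langle\sigma^2\rangle$), but the paper establishes faithfulness and fixed-point-freeness of the action by explicit computation: it quotes from the proof of \cite[Lemma 3.2]{jm} that $z\lhd' a_i=z^i$ for the transpositions $a_i=(p-1,p-i)$, identifies the stabilizers $F_{z^i}=\{a\in\Ss_{p-1}:\,a(p-i)=p-i\}$, and observes that nontrivial powers of the $(p-1)$-cycle $\sigma$ fix no point of $\{1,\dots,p-1\}$, hence lie in no $F_{z^i}$. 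You instead argue structurally: $\langle\sigma\rangle\subseteq\Ss_{p-1}^{C_p}$ normalizes $\langle z\rangle$, so $C_p\rtimes\langle\sigma\rangle$ is the full normalizer $N_{\Ss_p}(\langle z\rangle)\cong \mathrm{AGL}(1,p)$ of order $p(p-1)$, and faithfulness follows from the $p$-cycle being self-centralizing; the center is then killed by your general lemma on $\Zz_p\rtimes H$ with $H$ acting faithfully and nontrivially (which matches the paper's implicit use of the fact that a nontrivial automorphism of $\Zz_p$ fixes only the identity, needed to rule out central elements $(\chi,e)$ with $\chi\neq e$). Your approach buys conceptual clarity and avoids reliance on the explicit description of $\lhd'$ in \cite{jm}; the paper's buys a self-contained computation matched to the conventions already set up there.

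One further point in your favor: your caveat that the $K_p$ argument requires $(p-1)/2>1$ is a genuine catch. For $p=3$ one has $\sigma^2=e$, so $G(K_3^*)\cong\widehat{C_3}$ is abelian, and likewise $\cd(J_3)=\{1\}$ since the action $\lhd$ of $C_3$ on $\Ss_2$ is trivial; the proposition as stated (and the paper's proof) silently assumes $p\geq 5$, which is the only case used later (Theorem \ref{jp-kp} is vacuous or false for $p=3$ anyway, as $\Ss_3$ is solvable). Note only that your attainment claim in (i), that $\Ss_{p-1}^{C_p}=\langle\sigma\rangle$ is proper, carries the same implicit restriction $p\geq 5$, so the caveat you record for (ii) applies to (i) as well.
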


\begin{proof} Part (i) follows from the description of irreducible
representations of crossed products in \cite{MW}.

\medbreak We next show (ii). Recall that $C_p = \langle z \rangle$, where $z = (12\dots p)$ and the actions
$\lhd$, $\rhd$ are determined by the relation $s c = (s\rhd c) (s \lhd c)$ in
$\Ss_p$. So that the actions $\lhd'$, $\rhd'$ are determined by $cs = (c \rhd'
s) (c\lhd' s)$ in
$\Ss_p$, for all $s \in \Ss_{p-1}$, $c \in C_p$.

It follows from the proof of \cite[Lemma 3.2]{jm} that $z \lhd' a_i = z^i$, for all $i = 1,
\dots, p-1$, where $a_i = (p-1, p-i)$. In addition, the stabilizer of $z$ under
the action $\lhd'$ coincides with the subgroup $F_z = \{a \in \Ss_{p-1}:\,
a(p-1) = p-1\} \cong \Ss_{p-2}$.
These imply that, for all $i = 1, \dots, p-1$, the stabilizer of $z^i$ coincides
with the subgroup
$F_{z^i} = \{a \in \Ss_{p-1}:\, a(p-i) = p-i\}$. In particular, $C_p$ has no nontrivial fixed points under the action $\lhd'$.

On the other hand, the nontrivial powers of the $(p-1)$-cycle $\sigma \in
\Ss_{p-1}$ have no fixed point in $\{1, \dots, p-1\}$. Hence no nontrivial power
of $\sigma$ acts trivially on $C_p$ under the action $\lhd'$. 

\medbreak By Lemma \ref{inv-sdp}, $G(J_p^*) = \widehat{C_p} \rtimes
\langle \sigma \rangle$ is a semidirect product with respect to the action
induced by $\lhd'$, where $\sigma$ is a $(p-1)$-cycle in $\Ss_{p-1}$.
Then the center of $G(J_p^*)$ consists of all pairs $(e, x)$, where $x \in
\langle \sigma \rangle$ acts trivially on $C_p$ under the action $\lhd'$.

 Similarly, $G(K_p^*) = \widehat{C_p} \rtimes
\langle \sigma^2 \rangle$ is a semidirect product with respect to the action
induced by $\lhd'$ and the center of $G(K_p^*)$ consists of all pairs $(e, x)$,
where $x \in
\langle \sigma^2 \rangle$ acts trivially on $C_p$ under the action $\lhd'$.

Thus we obtain that the centers of the groups $G(J_p^*)$ and $G(K_p^*)$ are both trivial.
\end{proof}

\begin{theorem}\label{jp-kp} Let $\tilde\C$ be a fusion category. Suppose that $\tilde \C$ is
Grothendieck equivalent to one of the categories $\Rep J_p$ or $\Rep K_p$. Then
$\tilde \C$ is not solvable.
\end{theorem}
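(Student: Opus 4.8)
The plan is to argue by contradiction, combining the structure theorem for solvable fusion categories of \cite{ENO2} with the fact that the two invariants actually used in the argument are preserved under Grothendieck equivalence. Suppose $\tilde\C$ is solvable; since $\tilde\C$ is nontrivial, \cite[Definition 1.2]{ENO2} shows that $\tilde\C$ is either a $\Zz_q$-equivariantization or a $\Zz_q$-extension of a solvable fusion category $\D$, for some prime number $q$. Because $\tilde\C$ is Grothendieck equivalent to $\Rep J_p$ (respectively $\Rep K_p$), Remark \ref{biy} and Proposition \ref{groth-eq}(ii) give a group isomorphism $G(\tilde\C) \cong G(J_p^*)$ (respectively $G(K_p^*)$), so that $Z(G(\tilde\C))$ is trivial by Proposition \ref{triv-cent}(ii). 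We work throughout with $p \geq 5$, which is what Lemma \ref{not-ext} requires.

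I would first rule out the equivariantization case. The subtle point is that being an equivariantization is not a Grothendieck invariant, but the obstruction we invoke is. Indeed, if $\tilde\C \cong \D^{\Zz_q}$, then Lemma \ref{g-central} embeds $\widehat{\Zz_q} \cong \Zz_q$ into $Z(G(\tilde\C))$; this subgroup is nontrivial since $q \geq 2$, contradicting the triviality of $Z(G(\tilde\C))$ established above. Hence in both cases $\tilde\C$ must be a $\Zz_q$-extension of a solvable fusion category $\D$.

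I would then treat $\Rep K_p$ first, as the base case, and reduce $\Rep J_p$ to it. A $\Zz_q$-extension structure on $\tilde\C$ is a faithful $\Zz_q$-grading, which by Remark \ref{grading} transfers to a faithful $\Zz_q$-grading on the Grothendieck-equivalent category. If $\tilde\C$ is Grothendieck equivalent to $\Rep K_p$, this contradicts Lemma \ref{not-ext}(iii), which asserts that $\Rep K_p$ admits no $\Zz_q$-extension; so no such $\tilde\C$ is solvable. If instead $\tilde\C$ is Grothendieck equivalent to $\Rep J_p$, the transferred grading on $\Rep J_p$ together with Lemma \ref{not-ext}(ii) forces $q = 2$ and identifies the neutral component of the grading on $\Rep J_p$ with $\Rep K_p$; by Remark \ref{grading} the neutral component $\D$ of the grading on $\tilde\C$ is then Grothendieck equivalent to $\Rep K_p$. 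Since $\D$ is solvable, this contradicts the case just settled, and the proof is complete.

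The main obstacle is conceptual rather than computational: Grothendieck equivalence preserves neither the Drinfeld center nor the property of being an equivariantization, and these are precisely the tools used to prove Lemma \ref{not-ext}. The resolution, and the crux of the argument, is to replace those non-invariant properties by two genuine fusion-rule invariants that nonetheless suffice to block every step allowed by the solvability chain: the grading data, equivalently the universal grading group, invariant by Proposition \ref{groth-eq}(v) and Remark \ref{grading}, which handles extensions; and the center of the group of invertible objects, invariant by Proposition \ref{groth-eq}(ii) and computed to be trivial in Proposition \ref{triv-cent}(ii), which handles equivariantizations.
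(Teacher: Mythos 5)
Your proposal is correct and follows essentially the same route as the paper's own proof: triviality of $Z(G(\tilde\C))$ (Propositions \ref{groth-eq} and \ref{triv-cent}) plus Lemma \ref{g-central} to exclude $\Zz_q$-equivariantizations, transfer of the faithful grading via Remark \ref{grading}, and Lemma \ref{not-ext} to pin down $q=2$ with neutral component $\Rep K_p$ and then to derive the contradiction. The only difference is organizational—you settle the $\Rep K_p$ case first and reduce $\Rep J_p$ to it, whereas the paper runs the argument once and then "applies the same argument" to the solvable neutral component—which is the same induction unwound.
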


\begin{proof} Suppose on the contrary that $\tilde \C$ is solvable and
Grothendieck equivalent to $\C$, where
$\C = \Rep J_p$ or $\Rep K_p$. It follows from Proposition \ref{groth-eq}, that $G(\tilde \C) \cong G(\C)$. By
Proposition \ref{triv-cent}, the groups of invertible objects of $\Rep J_p$ and
$\Rep K_p$ have trivial center. Then the center of $G(\tilde \C)$ is trivial as
well.
It follows from Lemma \ref{g-central} that, for every prime number $q$, the
category $\tilde \C$ is not a $\Zz_q$-equivariantization of any fusion category.
Therefore $\tilde \C$ must be a $\Zz_q$-extension of a fusion subcategory
$\tilde \D$, and $\tilde \D$ is also a solvable fusion category. Hence $\C$ is a
 $\Zz_q$-extension of a fusion subcategory $\D$ such that $\tilde \D$ is
Grothendieck equivalent to $\D$.
It follows from Proposition \ref{RepJn} and Lemma \ref{not-ext} that $\C = \Rep J_p$, $q = 2$ and $\D = \Rep
K_p$.
Applying the same argument to the solvable fusion category $\tilde \D$ we get a
contradiction. This shows that $\tilde \C$ cannot be solvable and finishes the
proof of the theorem.
 \end{proof}

\subsection{Fusion rules of $\Rep K_5$} In this subsection we determine
explicitly the fusion rules of the
category $\Rep K_p$ in the case $p = 5$. It follows from \cite{MW} that simple
objects of the category $\Rep K_5$ are parameterized by pairs $(s, \rho)$, where
$s$ runs over a set of representatives of the orbits of $C_5$ on $\Aa_4$ and
$\rho$ is an irreducible representation of the stabilizer $F_s \subseteq C_5$.
The dimension of the simple object $S_{s, \rho}$ corresponding to the pair $(s,
\rho)$ is given by the formula
$\dim S_{s, \rho} = [C_5: F_s]$.

The $C_5$-action on $\Ss_4$ is explicitly determined in \cite[Table 1]{jm}. We
have in this case that there are $10$ fixed points and the remaining 2 orbits
consist of $5$ distinct elements each.
Furthermore, there are exactly $4$ distinct fixed points $\sigma$ such that
$\sigma = \sigma^{-1}$ and both nontrivial orbits contain elements of order $2$.
In view of \cite[Theorem 4.8]{jm},
$\Rep K_5$ has $5$ invertible objects of order $2$ and the $5$-dimensional
simple objects are self-dual.

Let us denote by $Y, Y'$ the simple objects corresponding to the nontrivial
orbits $\mathcal O, \mathcal O'$, respectively. By \cite[Table 1]{jm}, we have
\begin{align*}& \mathcal O = \{(123), (243), (132), (13)(24), (234)\}, \\
 & \mathcal O' =
\{(124), (143), (134), (12)(34), (142)\}.\end{align*}
By Lemma \ref{inv-sdp}, the group $G(\Rep K_5)$ is isomorphic to the dihedral
group $D_5$ of order $10$.
The unique subgroup $R$ of order $5$ of $G(\Rep K_5)$ coincides therefore with
the
stabilizer of $Y$ and $Y'$ under left (or right) multiplication.
Since every element $s$ outside of $R$ is of order $2$, then $s\otimes Y \cong Y
\otimes s \cong Y'$.
So that we have a decomposition
\begin{equation}\label{a-b}
Y \otimes Y^* \cong Y\otimes Y \cong \bigoplus_{r \in R} r \oplus aY
\oplus bY',
\end{equation}
where $a, b \geq 0$ and $a+b = 4$.
Letting $F: \Rep K_5 \to \C(\Aa_{4}) = \Rep k^{\Aa_{4}}$ denote the
restriction functor, we obtain
\begin{align*}& F(Y) = V_{(123)} \oplus V_{(243)}\oplus V_{(132)}\oplus V_{
(13)(24)}\oplus V_{ (234)},\\
 & F(Y') = V_{(124)}  \oplus V_{(143)}  \oplus V_{(134)} \oplus V_{(12)(34)}
\oplus V_{(142)},
\end{align*}
where, for each $s \in \Aa_4$, $V_s$ denotes the one-dimensional simple
$k^{\Aa_4}$-module corresponding to $s$.
Comparing these relations with \eqref{a-b}, we find that $a = b = 2$. Hence the
fusion rules of $\Rep K_5$
are determined by the condition $G = G(\Rep K_5) \cong D_5$, $g \otimes Y = Y
\otimes g = Y'$, for every element or order $2$ of $G$, and
$$Y\otimes Y \cong \bigoplus_{r \in R} r \oplus 2Y
\oplus 2Y'\cong Y'\otimes Y',$$ where $R$ is the unique subgroup of order $5$ of
$G$.

\subsection{The dual Hopf algebras $J_n^*$, $K_n^*$}

Let $n \geq 2$ be a  natural number and let $H_n = J_n^*$.
Recall that there is a split exact sequence of Hopf algebras
\begin{equation}\label{hn-ln} k \longrightarrow k^{C_n} \longrightarrow H_n
\longrightarrow k\Ss_{n-1}
\longrightarrow k.
 \end{equation}
Suppose that $n$ is odd.  Let  $L_n = K_n^*$, so that there is a split exact sequence of Hopf algebras
\begin{equation}\label{hn-ln-2}k \longrightarrow k^{C_n} \longrightarrow L_n
\longrightarrow k\Aa_{n-1}
\longrightarrow k.
 \end{equation}
 Moreover, by Lemma \ref{secjnkn} there is a cocentral exact sequence
 \begin{equation}\label{cocentral}
      k \longrightarrow L_n \longrightarrow H_n \longrightarrow k{\Zz_2}
\longrightarrow k.
     \end{equation}

\begin{remark}\label{dual-next}  Suppose $n \geq 5$. Since $D(H_n) \cong D(J_n)$,  then $G(\Rep D(H_n)) \cong \Zz_2$. Let $q$ be a prime number.  If the category $\Rep H_n$ is a $\Zz_q$-extension or a $\Zz_q$-equivariantization of a fusion category, then $q = 2$.

Suppose $n$ is odd. In view of \cite[Proposition 3.5]{ext-ty}, $\Rep H_n$ is a
$\Zz_2$-equivariantiza\-tion of $\Rep L_n$. As in the proof of Lemma \ref{not-ext}, we obtain that if $n \geq 5$, the
category $\Rep H_n$ is not a $\Zz_q$-extension of any fusion category.
Similarly,  $\Rep L_n$ is not a $\Zz_q$-extension or a $\Zz_q$-equivariantization of any fusion category.
\end{remark}

\begin{lemma}\label{grupos-deg2} Let $n \geq 5$ be a natural number. Then the following hold.

(i) $G(\Rep H_n) \cong  \Zz_2$.

(ii) $\Rep (H_5)$ is of type $(1, 2;  2, 1;  3, 2;  4, 2; 8, 1)$ and $H_n$ has no irreducible representation of dimension 2, for all $n > 5$.

Assume in addition that $n$ is odd. Then

(iii) $G(\Rep L_n) = \uno$, if $n > 5$.

(iv) $\Rep L_5$ is of type $(1, 3; 3, 1; 4, 3)$ and $L_n$ has no irreducible representation of dimension 2.
 \end{lemma}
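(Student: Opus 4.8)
The plan is to compute the group of invertible objects and the set of dimensions of simple representations for each Hopf algebra $H_n = J_n^*$ and $L_n = K_n^*$ directly from the classification of irreducible representations of abelian extensions. Recall from \eqref{irrepns-bcpt} that the simple objects of $\Rep H_n$ are parameterized by pairs $(s, U_s)$, where $s$ runs over representatives of the orbits of the action $\rhd'$ of $\Ss_{n-1}$ on $C_n$ (equivalently, of $C_n$ on $\Ss_{n-1}$ under $\lhd'$), and $U_s$ is an irreducible projective representation of the stabilizer $(\Ss_{n-1})_s$ with the associated cocycle; since the extension is split, the cocycle is trivial and $U_s$ is an ordinary irreducible representation of the stabilizer. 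The dimension of the corresponding simple object is $[\Ss_{n-1}:(\Ss_{n-1})_s]\dim U_s$.

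For part (i), since the extension \eqref{hn-ln} is split, Remark \ref{inv-split} identifies $G(\Rep H_n)$ with the semidirect product $\widehat{\Ss_{n-1}}\rtimes (C_n)_0$, where $(C_n)_0$ is the subgroup of $C_n$ fixed by the relevant action. Because $\Ss_{n-1}$ has abelianization $\Zz_2$ for $n-1\geq 3$, i.e. $n\geq 4$, we have $\widehat{\Ss_{n-1}}\cong\Zz_2$; and one checks from the description of the action (as in Proposition \ref{triv-cent}) that the only fixed point of the $\Ss_{n-1}$-action on $C_n$ is the identity, so $(C_n)_0$ is trivial. This gives $G(\Rep H_n)\cong\Zz_2$, which is (i). For (iii), the same analysis with $\Aa_{n-1}$ in place of $\Ss_{n-1}$ uses that $\Aa_{n-1}$ is perfect for $n-1\geq 5$, i.e. $n\geq 6$ (so $n>5$ odd), whence $\widehat{\Aa_{n-1}}$ is trivial and $G(\Rep L_n)=\uno$.

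For parts (ii) and (iv), I would determine the types by a direct enumeration of orbit representatives and stabilizers. The main input is the explicit orbit data for the $C_n$-action, which is available from \cite[Table 1]{jm} (and is exactly the source used in the $\Rep K_5$ computation above). For each orbit representative $s$ one lists the irreducible representations of the stabilizer $(\Ss_{n-1})_s$ (respectively $(\Aa_{n-1})_s$) and reads off the simple dimensions via $[\Ss_{n-1}:(\Ss_{n-1})_s]\dim U_s$. In the case $n=5$ this is a finite, completely explicit computation over $\Ss_4$ and $\Aa_4$, yielding the stated types $(1,2;2,1;3,2;4,2;8,1)$ for $H_5$ and $(1,3;3,1;4,3)$ for $L_5$. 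For the assertion that no irreducible representation of dimension $2$ occurs when $n>5$, I would argue structurally rather than by enumeration: a $2$-dimensional simple arises only from a pair $(s,U_s)$ with $[\Ss_{n-1}:(\Ss_{n-1})_s]\dim U_s = 2$, forcing either $\dim U_s=2$ with $s$ a fixed point, or $\dim U_s=1$ with index $2$. The first is ruled out because, as noted in Remark \ref{rmk-ansn-2}, the relevant stabilizers (being isomorphic to $\Ss_{n-2}$ or $\Aa_{n-2}$ with $n-2\geq 4$) have no $2$-dimensional irreducibles; the second is ruled out because the proper stabilizers have index strictly larger than $2$ for $n>5$, as the nontrivial orbits have size $n$.

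The main obstacle I anticipate is the $n=5$ type computation, which is not hard but requires care: one must correctly count fixed points versus genuine orbits of the $C_5$-action (as recorded above, $10$ fixed points and two orbits of size $5$ for the $\Ss_4$-case), correctly identify each stabilizer up to isomorphism, and tabulate the contributions of all their irreducible representations, taking care that the dual actions $\lhd'$, $\rhd'$ of Remark \ref{inv-split} are the ones governing $H_n$ and $L_n$ rather than the actions $\lhd$, $\rhd$ governing $J_n$ and $K_n$. The bookkeeping, while elementary, is where an error is most likely to creep in, so I would cross-check the resulting type against the constraint $\FPdim\Rep H_n=|J_n|=n\,|\Ss_{n-1}|$ (and the analogue for $L_n$) to confirm that the squared dimensions sum correctly.
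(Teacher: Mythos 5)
Your overall strategy coincides with the paper's: Lemma \ref{inv-abel} together with Remark \ref{inv-split} for the groups of invertibles, and an orbit--stabilizer enumeration of simple modules for the types. Parts (i) and (iii) are fine as you argue them. However, your treatment of (ii) and (iv) contains two concrete errors, both stemming from mixing up the two mutually dual actions. First, the orbit data you name as ``the main input,'' namely \cite[Table 1]{jm}, records the orbits of $C_5$ acting on $\Ss_4$ (resp.\ $\Aa_4$); that is the action governing $J_5$ and $K_5$, whose simple dimensions lie in $\{1,5\}$. For $H_n$ and $L_n$ the parameterization uses the orbits of $\Ss_{n-1}$ (resp.\ $\Aa_{n-1}$) acting on $C_n$, and by \cite[Lemma 3.2]{jm} (the reference the paper actually invokes) there are exactly \emph{two} such orbits, $\{e\}$ and $\{z,\dots,z^{n-1}\}$, with stabilizers the full group and $F_z\cong\Ss_{n-2}$ (resp.\ $F_z\cap\Aa_{n-1}\cong\Aa_{n-2}$). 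With this data the $n=5$ types fall out at once: the orbit $\{e\}$ contributes the irreducible degrees of $\Ss_4$ ($1,1,2,3,3$) resp.\ $\Aa_4$ ($1,1,1,3$), and the orbit of $z$ contributes $4\dim\rho$ with $\rho$ an irrep of $\Ss_3$ ($4,4,8$) resp.\ $\Aa_3$ ($4,4,4$). No table look-up is needed; running your plan on Table 1 instead would yield the wrong types, and only your proposed $\sum d_i^2$ cross-check would have caught it.

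Second, your structural argument for $n>5$ swaps the stabilizers between the two cases. A $2$-dimensional simple coming from a fixed point (necessarily $s=e$) has stabilizer the \emph{full} group $\Ss_{n-1}$ (resp.\ $\Aa_{n-1}$), not $\Ss_{n-2}$ or $\Aa_{n-2}$; this case is excluded by Remark \ref{rmk-ansn-2} applied to $\Ss_{n-1}$, which is exactly where the hypothesis $n-1\geq 5$, i.e.\ $n>5$, enters. Your claim as written --- that the stabilizers $\Ss_{n-2}$ with $n-2\geq 4$ have no $2$-dimensional irreducibles --- is in fact false for $n=6$, since $\Ss_4$ has one; Remark \ref{rmk-ansn-2} only covers $\Ss_m$ for $m\geq 5$. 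The index-$2$ case is correctly excluded, although the nontrivial orbit has size $n-1$, not $n$. Both slips are repairable, and once repaired your proof is the paper's.
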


\begin{proof} Consider the exact sequences \eqref{hn-ln}, \eqref{hn-ln-2}. The respective
invariant subgroups $C_n^{\Ss_{n-1}}$ and $C_n^{\Aa_{n-1}}$ are both trivial.
Parts (i) and (iii) follow from Lemma \ref{inv-abel}.

\medbreak Since $H_n$ is a split abelian extension of $k^{C_n}$ by $k\Ss_{n-1}$ and the
action of $\Ss_{n-1}$ has two orbits $\{e\}$ and $\{z, \dots, z^{n-1}\}$, then
the simple $H_n$-modules are
classified by pairs $(t, \rho)$, where either $t = e$
and $\rho$ is an irreducible representation of $F_e = \Ss_{n-1}$, or
$t = z$ and $\rho$ is an irreducible representation of $F_z = \{a \in \Ss_{n-1}:\, a(n-1) = n-1 \} \cong  \Ss_{n-2}$. See \cite[Lemma 3.2]{jm}. If $S_{t, \rho }$ is the simple module corresponding to the pair $(t, \rho)$, we
have in addition  $\dim S_{e, \rho} = \dim \rho$, and $\dim S_{z, \rho} =
[\Aa_{n-1}: F_z]\dim \rho = (n-1)\dim \rho$. This implies the statement for $H_5$ in part (ii).

Suppose that $n > 5$.   Then $\dim S_{z,
\rho} > 2$, for all $\rho$.
As observed in Remark \ref{rmk-ansn-2}, the symmetric group $\Ss_{n-1}$ has no irredubible
representation of degree $2$. Therefore we also get that $\dim S_{e, \rho} \neq 2$, for all $\rho$. In conclusion $H_n$
has no irreducible representation of dimension 2, and we obtain (ii).

\medbreak Suppose that $n$ is odd. Similarly, $L_n$ is a split abelian extension of $k^{C_n}$ by $k\Aa_{n-1}$ and the
action of $\Aa_{n-1}$ has two orbits $\{e\}$ and $\{z, \dots, z^{n-1}\}$. Hence
the simple $L_n$-modules are
classified by pairs $(t, \rho)$, where either $t = e$
and $\rho$ is an irreducible representation of $F_e \cap \Aa_{n-1} = \Aa_{n-1}$, or
$t = z$ and $\rho$ is an irreducible representation of $F_z \cap \Aa_{n-1} \cong  \Aa_{n-2}$. This implies that $\Rep L_5$ is of the prescribed type.
As before, $\dim S_{z, \rho} > 2$, for all $\rho$, and since the alternating group $\Aa_{n-1}$ has no irredubible
representation of degree $2$, then also $\dim S_{e, \rho} \neq 2$, for all $\rho$. So that $L_n$
has no irreducible representation of dimension 2. This proves part (iv) and finishes the proof of the lemma.
\end{proof}

\begin{lemma}\label{nsol-type} Let $\C$ be a fusion category of type $(1, 3; 3, 1; 4, 3)$. Then $\C$ is not solvable.
\end{lemma}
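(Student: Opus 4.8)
The plan is to assume $\C$ solvable and contradict it using the structure theory of solvable fusion categories. First record that $\FPdim \C = 3\cdot 1 + 9 + 3\cdot 16 = 60$ and $G(\C)\cong\Zz_3$, so $\C$ is integral with exactly three invertible objects. If $\C$ is solvable then, for some prime $q$, it is a $\Zz_q$-extension or a $\Zz_q$-equivariantization of a solvable fusion category, and I would dispose of the two cases separately.

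To rule out extensions: in a faithful $\Zz_q$-grading $\C=\oplus_g\C_g$ each component has Frobenius--Perron dimension $60/q$ and $q\mid 60$, so $q\in\{2,3,5\}$. Since a simple object of dimension $4$ contributes $16$ to the dimension of its component and a simple object of dimension $3$ contributes $9$, a packing argument excludes every case: for $q=5$ we have $60/q=12<16$; for $q=2$ the three objects of dimension $4$ must lie in two components of dimension $30$, forcing two of them (hence $32>30$) into one component; for $q=3$ each of the three components of dimension $20$ holds exactly one object of dimension $4$, leaving room $4<9$ for the object of dimension $3$. Hence $\C$ admits no nontrivial grading, $\C_{ad}=\C$, and $\C$ is not a nontrivial extension.

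Therefore $\C\cong\D^{\Zz_q}$ for a solvable $\D$, and the central embedding $\Rep\Zz_q\hookrightarrow\C$ gives $\widehat{\Zz_q}\subseteq G(\C)\cong\Zz_3$, so $q=3$ and $\FPdim\D=20$. Using $\FPdim S_{Y,U}=[\Zz_3:(\Zz_3)_Y]\dim U\,\FPdim Y$ together with $H^2(\Zz_3,k^*)=0$, I would read off that the unique object of dimension $3$ of $\C$ comes from a free $\Zz_3$-orbit of three invertible objects of $\D$, and the three objects of dimension $4$ from a single $\Zz_3$-fixed object $Z\in\Irr\D$ of dimension $4$. A dimension count then forces $\D$ to be exactly of type $(1,4;4,1)$ with $G(\D)\cong\Zz_2\times\Zz_2=\{\uno,a,b,c\}$, the action of $\Zz_3$ fixing $Z$ and cyclically permuting $a,b,c$. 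Since $g\otimes Z\cong Z$ for each invertible $g$, one gets $Z\otimes Z^*\cong\uno\oplus a\oplus b\oplus c\oplus 3Z$, so $Z$ generates $\D$, $\D_{ad}=\D$, and the same dichotomy applied to $\D$ makes it a $\Zz_2$-equivariantization $\E^{\Zz_2}$ with $\FPdim\E=10$ of type $(1,2;2,2)$; descending once more, $\E$ lies over a pointed category $\vect_{\Zz_5}$ of dimension $5$.

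The main obstacle is to close the argument at the foot of this tower. The point is that the $\Zz_3$-symmetry permuting the invertible objects $a,b,c$ of $\D$ should, by functoriality of de-equivariantization, descend to an action of $\Zz_3$ on the pointed category $\vect_{\Zz_5}$, that is, to a homomorphism $\Zz_3\to\Aut_\otimes(\vect_{\Zz_5})\cong\Aut(\Zz_5)\cong\Zz_4$; as $3\nmid 4$ this action is trivial, whence $\Zz_3$ can move none of the structure lying over $\Zz_5$ and cannot permute $a,b,c$ nontrivially. Equivalently, the whole tower exhibits $\C$ as group-theoretical and Morita equivalent to $\C(\Lambda,\omega)$ with $\Lambda$ solvable of order $60$; the only order-$12$ group with abelianization $\Zz_3$ is $\Aa_4=(\Zz_2\times\Zz_2)\rtimes\Zz_3$, and $\Aa_4$ admits no nontrivial action on $\Zz_5$, so a solvable tower can only produce the type $(1,15;3,5)$ of $\vect_{\Zz_5}$ equivariantized by a group acting trivially, not $(1,3;3,1;4,3)$. (The genuine category of this type, $\Rep\Aa_5$ up to Morita equivalence, evades the tower precisely because $\Aa_5$ is simple, and is non-solvable.) The real content, and the step I expect to require the most care, is making this descent of the $\Zz_3$-action rigorous and controlling the associativity cocycles, since $\C$ is only group-theoretical and need not be the representation category of an honest finite group.
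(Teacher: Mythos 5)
Your argument coincides with the paper's own proof up to its decisive point: both rule out $\Zz_q$-extensions (the paper by observing that every fusion subcategory of $\C$ lies in the subcategory $\B$ of type $(1,3;3,1)$, you by an equivalent packing count), both force $q=3$ from the central copy of $\Rep\Zz_q$ inside $\C$ and $G(\C)\cong\Zz_3$, and both conclude that $\D$ must be of type $(1,4;4,1)$ with $G(\D)\cong\Zz_2\times\Zz_2$ permuted transitively by $\Zz_3$ and with $Z\otimes Z^*\cong\uno\oplus a\oplus b\oplus c\oplus 3Z$. The gap is in how you close. The proposed descent of the $\Zz_3$-action down the tower $\D\cong\E^{\Zz_2}$, $\E\cong(\vect_{\Zz_5})^{\Zz_2}$ is not just delicate to make rigorous --- it is impossible, for a reason your own setup exhibits: the equivariantization $\D\cong\E^{\Zz_2}$ corresponds to a central copy of $\Rep\Zz_2\subseteq\D$ whose nontrivial object is an invertible object of $\D$, hence one of $a,b,c$; since the $\Zz_3$-action permutes $a,b,c$ transitively, it preserves no such copy (the three candidate Tannakian subcategories $\langle\uno,a\rangle,\langle\uno,b\rangle,\langle\uno,c\rangle$ are themselves permuted), so ``functoriality of de-equivariantization'' produces no action on $\E$, let alone a homomorphism $\Zz_3\to\Aut_\otimes(\vect_{\Zz_5})$. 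The fallback sketch is also unsupported: solvability does not imply group-theoreticity (the Ising category is solvable and not group-theoretical, and even for integral categories no such implication is available), and iterated equivariantizations by $\Zz_2,\Zz_2,\Zz_3$ do not assemble into an equivariantization by a single group of order $12$ without exactly the compatibility that fails here. A smaller gap of the same kind: for $\E$ of type $(1,2;2,2)$ a faithful $\Zz_2$-grading into two components of dimension $5$ is numerically possible, so excluding it needs a fusion-rule argument, not packing.

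The irony is that you already hold the contradiction in hand when you write $Z\otimes Z^*\cong\uno\oplus a\oplus b\oplus c\oplus 3Z$: this exhibits $\D$ as a near-group fusion category of type $(G(\D),3)$ with $G(\D)\cong\Zz_2\times\Zz_2$, and Siehler's classification (\cite[Theorem 1.2]{siehler}) forces the group of invertibles of such a category to be cyclic. This is precisely how the paper concludes: the category $\D$ does not exist at all, so no further descent, no control of associators or cocycles, and no analysis of the $\Zz_3$-action is needed. If you wish to avoid citing Siehler, the honest alternative is to rule out near-group categories of type $(\Zz_2\times\Zz_2,3)$ directly, which is the actual content your descent was trying, unsuccessfully, to reach.
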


\begin{proof} The assumption on the type of $\C$ implies that the simple objects of dimensions $1$ and $3$ generate a fusion subcategory $\B$ of $\C$ of type $(1, 3; 3, 1)$ and moreover, every fusion subcategory of $\C$ is contained in $\B$.

Suppose first that $\C$ is a $\Zz_q$-extension of a fusion subcategory $\C_0$, for some prime number $q$. Then necessarily $\C_0 = \B$ and $q = 5$. Hence we have a $\Zz_5$-faithful grading $\C = \C_0 \oplus \C_1 \oplus \dots \oplus \C_4$, with trivial component $\C_0 = \B$. But $\C$ has only 3 classes of simple objects outside of $\B$, entailing that such a decomposition is impossible.

Therefore $\C$ must be a $\Zz_q$-equivariantization of a fusion category $\D$, for some prime number $q$, where $\D$ is also a solvable fusion category. Thus $q = 3$ and $\C \cong \D^{\Zz_3}$. The description of simple objects of $\D^{\Zz_3}$ together with the assumption on the type of $\C$ imply that $\D$ must be of type $(1, 4; 4, 1)$; c.f. Formula \eqref{dim-equiv}.
Moreover,  the action (by group automorphisms) of $\Zz_3$ on the set of nontrivial invertible objects of $\D$ must be transitive, hence $G(\D) \cong \Zz_2 \times \Zz_2$.

On the other hand, letting $X$ be the unique noninvertible simple object of $\D$, we must have $$X^{\otimes 2} \cong \oplus_{Y \in G(\D)}Y \oplus 3X.$$ This means that $\D$ is a near-group fusion category of type $(G, 3)$, where $G =G(\D)$. Then it follows from \cite[Theorem 1.2]{siehler} that the group $G(\D)$ is cyclic, which leads to a contradiction. Therefore $\C$ cannot be solvable. This finishes the proof of the lemma.
\end{proof}

\begin{theorem}\label{dual-jn-kn} Let $\C$ be a fusion category and let $n \geq 5$ be a
natural number. Then we have:

(i) If $\C$ is Grothendieck equivalent to $\Rep H_n$,  then $\C$ is not
solvable.

(ii) Suppose that $n$ is odd. If $\C$ is Grothendieck equivalent to $\Rep L_n$, then $\C$ is not
solvable.
\end{theorem}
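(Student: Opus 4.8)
The plan is to argue by contradiction: assume a category $\C$ Grothendieck equivalent to $\Rep H_n$ (resp. $\Rep L_n$) is solvable, and contradict the non-solvability of $\Rep H_n$ (resp. $\Rep L_n$) itself. First I would record this non-solvability: since $D(H_n)\cong D(J_n)\cong D(\Ss_n)$ and, for $n$ odd, $D(K_n)\cong D(\Aa_n)$, the categories $\Rep H_n$ and $\Rep L_n$ are Morita equivalent to $\Rep\Ss_n$ and $\Rep\Aa_n$, which are non-solvable for $n\geq 5$; solvability is a Morita invariant. The whole argument then rests on the Grothendieck-invariant data of Lemma \ref{grupos-deg2}, namely the group $G(\C)$, the set $\cd(\C)$ and the type of $\C$ (preserved by Proposition \ref{groth-eq}), together with the universal grading group $U(\C)$ (preserved by Proposition \ref{groth-eq}(v)).

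I would dispose of every case except $\Rep H_5$ quickly. For part (ii) with $n>5$, Lemma \ref{grupos-deg2}(iii) gives $G(\Rep L_n)=\uno$, hence $G(\C)=\uno$; since every nontrivial solvable fusion category has nontrivial invertible objects, $\C$ is not solvable. For part (ii) with $n=5$, Lemma \ref{grupos-deg2}(iv) says $\Rep L_5$ has type $(1,3;3,1;4,3)$, which is preserved under Grothendieck equivalence, so $\C$ has this type and Lemma \ref{nsol-type} applies directly. For part (i) with $n>5$, Lemma \ref{grupos-deg2}(i),(ii) give $G(\C)\cong\Zz_2$ and no simple object of Frobenius--Perron dimension $2$; then Proposition \ref{cor-cyc} (with $p=2$) applied to the solvable category $\C$ would force $\Rep H_n$, being Grothendieck equivalent to $\C$, to be solvable, a contradiction.

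The remaining and hardest case is part (i) with $n=5$, where $\Rep H_5$ has type $(1,2;2,1;3,2;4,2;8,1)$ and does possess a simple object of dimension $2$, so Proposition \ref{cor-cyc} no longer applies. Assuming $\C$ solvable with $G(\C)\cong\Zz_2$, I would first exclude that $\C$ is a $\Zz_q$-extension: being a $\Zz_q$-extension is equivalent to the existence of a surjection $U(\C)\twoheadrightarrow\Zz_q$, and $U$ is Grothendieck-invariant, so this would make $\Rep H_5$ a $\Zz_q$-extension, contradicting Remark \ref{dual-next}. Hence $\C$ must be a $\Zz_q$-equivariantization of a fusion category $\D$; Lemma \ref{g-central} forces $\widehat{\Zz_q}$ to embed in the center of $G(\C)\cong\Zz_2$, so $q=2$ and $\C\cong\D^{\Zz_2}$ with $\D$ integral (equivariantization preserves integrality) and solvable (as a component of a quotient of $\C$, by \cite[Proposition 4.1]{ENO2}).

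The core computation is then to pin down the type of $\D$ from that of $\C$ using the forgetful functor $F:\C=\D^{\Zz_2}\to\D$ and formula \eqref{dim-equiv}: a $\Zz_2$-fixed simple $Y$ of $\D$ yields two simples of $\C$ of dimension $\FPdim Y$, while a $2$-element orbit of dimension $d$ yields one simple of $\C$ of dimension $2d$. Matching against $(1,2;2,1;3,2;4,2;8,1)$ forces $\uno$ to be the only fixed invertible, a single orbit of invertibles (so $G(\D)\cong\Zz_3$), one fixed simple of dimension $3$, one orbit of simples of dimension $4$, and leaves exactly two options for the dimension-$4$ simples of $\C$: a single fixed simple of dimension $4$, or two orbits of simples of dimension $2$. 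The latter is impossible: it would give $\D$ four simple objects of dimension $2$ on which $G(\D)\cong\Zz_3$ acts by tensoring, necessarily fixing at least one, say $P$; then $\uno\oplus W\oplus W'\subseteq P\otimes P^*$ accounts for Frobenius--Perron dimension $3$ of $(\FPdim P)^2=4$, leaving a summand of dimension $1$, which is impossible since the only invertibles are $\uno,W,W'$ (already used with multiplicity one) and every other simple has dimension $\geq 2$. Thus $\D$ has type $(1,3;3,1;4,3)$, and Lemma \ref{nsol-type} shows $\D$ is not solvable, contradicting solvability of $\D$. I expect this final dimension-bookkeeping, together with the exclusion of the spurious type via the fixed-point argument, to be the main obstacle.
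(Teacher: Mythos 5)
Your proposal is correct and follows essentially the same route as the paper's proof: Proposition \ref{cor-cyc} for part (i) with $n>5$, triviality of $G(\Rep L_n)$ (Lemma \ref{grupos-deg2}) and Lemma \ref{nsol-type} for part (ii), and for $\Rep H_5$ the exclusion of $\Zz_q$-extensions via Remark \ref{dual-next}, reduction to a $\Zz_2$-equivariantization $\C \cong \D^{\Zz_2}$, determination of the type of $\D$, and an appeal to Lemma \ref{nsol-type}. The only difference is one of detail: where the paper asserts in a single sentence that $\D$ must be of type $(1,3;3,1;4,3)$, you carry out the orbit bookkeeping explicitly and rule out the alternative type $(1,3;2,4;3,1;4,2)$ by a valid fixed-point/fusion-rule argument, which is a worthwhile filling-in of the paper's terse step.
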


\begin{proof} To show part (i), suppose  first that $n > 5$. By Lemma \ref{grupos-deg2}, $G(\Rep H_n) \cong \Zz_2$ and $\Rep H_n$ has no simple objects of dimension $2$. The claim follows in this case from Proposition \ref{cor-cyc}.

Consider the case $n = 5$. Suppose on the contrary that $\C$ is Grothendieck equivalent
to $\Rep H_5$ and $\C$ is solvable. Then $\C$ is of type $(1, 2;  2, 1;  3, 2;  4, 2; 8, 1)$ and, for any prime $q$, $\C$ is not a $\Zz_q$-extension of any fusion subcategory, in view of Remark \ref{dual-next}. Therefore $\C$ must be a $\Zz_2$-equivariantization of a solvable fusion category $\D$ of dimension $60$. The description of the simple
objects of $\D^{\Zz_2}$ together with the assumption on the type of $\C$ imply that $\D$ must be of type $(1, 3; 3, 1; 4, 3)$. Lemma \ref{nsol-type} implies that $\D$ is not solvable, which is a contradiction. Thus we get (i).

\medbreak Let us show (ii). If $n = 5$, the result follows from Lemma \ref{nsol-type}. Suppose next that $n > 5$. Since a solvable fusion category contains nontrivial invertible objects, then part (ii) follows from Lemma \ref{grupos-deg2} (iii).
\end{proof}

\subsection{Further examples  associated to the symmetric group}

Let $n \geq 2$ be a natural number. Consider the matched pair $(\Aa_n, C_2)$,
where $C_2 = \langle (12) \rangle \subseteq \Ss_n$, the action $\lhd: \Aa_n
\times C_2 \to \Aa_n$ is given by conjugation in $\Ss_n$ and the action
$\rhd: \Aa_n \times C_2 \to C_2$ is trivial. The associated group $\Aa_n \bowtie
C_2$ is isomorphic to the symmetric group $\Ss_n$.

Let $B_n = k^{\Aa_n}\# kC_2$ be the split abelian extension associated to this
matched pair. We have $\cd(B_n) = \{1, 2\}$.
The fusion category $\Rep B_n$ is Morita equivalent to $\Ss_n$ and therefore
it is not solvable if $n \geq 5$.

\begin{remark} Since the action $\rhd$ is the trivial one and $C_2 \cong \Zz_2$, there is a cocentral exact sequence
\begin{equation}\label{cocentral-B_n}
      k \longrightarrow k^{\Aa_n} \longrightarrow B_n \longrightarrow k{\Zz_2}
\longrightarrow k.
     \end{equation}
In view of \cite[Proposition 3.5]{ext-ty} $\Rep B_n$ is a $\Zz_2$-equivariantization of $\Rep k^\Aa_n=\C(\Aa_n)$.

Moreover, since $\Rep B_n$ is Morita equivalent to $\Ss_n$ and the group of invertible objects of $\Z(\Rep \Ss_n)$ is cyclic of order 2, then
for all prime number $q$, $\Rep B_n$ is not a $\Zz_q$-extension of any fusion category and if it is a $\Zz_q$-equivariantization, then $q = 2$ (compare with Proposition \ref{RepJn}).
In particular, not being a $\Zz_q$-extention,  $B_n$ has no nontrivial central group-like elements; that is, $Z(B_n) \cap G(B_n) = \{1\}$.
\end{remark}

Our first statement concerns the dual Hopf algebra $B_n^*$. 

\begin{theorem}\label{bn*} Let $\C$ be a fusion category. Suppose that $\C$ is Grothendieck equivalent to $\Rep B_n^*$, $n \geq 5$. Then $\C$ is not solvable. \end{theorem}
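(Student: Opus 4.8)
The plan is to reduce the statement to Proposition \ref{cor-cyc} applied with $p=2$; the main work is to show that $\Rep B_n^*$ has the two Grothendieck-invariant features required there, namely $G(\Rep B_n^*)\cong\Zz_2$ and no simple object of Frobenius-Perron dimension $2$, while recording that $\Rep B_n^*$ itself is not solvable for $n\ge 5$. First I would pin down the structure of $B_n^*$. Since the action $\rhd\colon\Aa_n\times C_2\to C_2$ in the matched pair is trivial, the comultiplication of $B_n=k^{\Aa_n}\# kC_2$ collapses to $\Delta(e_s\# x)=\sum_{gh=s}e_g\# x\otimes e_h\# x$, so that $B_n$ is, as a coalgebra, the tensor product $k^{\Aa_n}\otimes kC_2$. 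Dualizing, $B_n^*\cong k\Aa_n\otimes k^{C_2}$ as an algebra.

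From this algebra isomorphism, the simple $B_n^*$-modules are two copies of the irreducible representations of $\Aa_n$—one copy for each of the two points of $C_2$—and the corresponding simple object of $\Rep B_n^*$ has Frobenius-Perron dimension equal to the degree of the underlying $\Aa_n$-representation. For every $n\ge 5$ the group $\Aa_n$ is simple non-abelian, so it has a unique one-dimensional representation and hence $G(\Rep B_n^*)\cong\Zz_2$; moreover $\Aa_n$ has no irreducible representation of degree $2$ by Remark \ref{rmk-ansn-2}, so $\Rep B_n^*$ has no simple object of Frobenius-Perron dimension $2$. I would stress that, unlike the category $\Rep H_5$ of Theorem \ref{dual-jn-kn}, no dimension-$2$ simple object occurs here, so this computation is uniform in $n$ and the case $n=5$ needs no separate treatment. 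It remains to note that $\Rep B_n^*$ is not solvable: being the dual of the abelian extension \eqref{cocentral-B_n}, it is itself an abelian extension with matched pair $(C_2,\Aa_n)$, and by Remark \ref{2-morita} it is solvable if and only if the associated group $\Aa_n\bowtie C_2$ is solvable; this group contains $\Aa_n$ as a subgroup, so it is not solvable for $n\ge 5$.

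With these facts the theorem follows by contradiction. Suppose $\C$ is Grothendieck equivalent to $\Rep B_n^*$ and $\C$ is solvable. By Proposition \ref{groth-eq} the group of invertible objects and the Frobenius-Perron dimensions of simple objects are preserved, so $G(\C)\cong\Zz_2$ and $\C$ has no simple object of Frobenius-Perron dimension $2$. Applying Proposition \ref{cor-cyc} to $\C$ with $p=2$, every fusion category Grothendieck equivalent to $\C$—in particular $\Rep B_n^*$—would be solvable, contradicting the previous paragraph. Therefore $\C$ is not solvable. The only genuinely delicate point is the determination of the type of $\Rep B_n^*$, that is, the algebra isomorphism $B_n^*\cong k\Aa_n\otimes k^{C_2}$ forced by the triviality of $\rhd$; once this is in place, the conclusion is a formal consequence of the Grothendieck-invariance of $G(-)$ and of Frobenius-Perron dimensions together with Proposition \ref{cor-cyc}.
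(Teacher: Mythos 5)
Your proof is correct, but it takes a genuinely different route from the paper's. The paper's own argument is shorter and purely structural: dualizing the cocentral sequence \eqref{cocentral-B_n} gives a central exact sequence $k \to k^{\Zz_2} \to B_n^* \to k\Aa_n \to k$, so $\Rep B_n^*$ is a $\Zz_2$-extension of $\Rep \Aa_n$; since gradings are determined at the level of Grothendieck rings (Remark \ref{grading}), any $\C$ Grothendieck equivalent to $\Rep B_n^*$ is a $\Zz_2$-extension of a fusion subcategory $\D$ that is Grothendieck equivalent to $\Rep \Aa_n$, and if $\C$ were solvable then so would be $\D$, forcing $\D_{pt} \neq \vect$ --- impossible because $G(\D) \cong G(\Rep \Aa_n)$ is trivial. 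In particular, the paper never needs the absence of degree-$2$ irreducible representations of $\Aa_n$, nor does it need the non-solvability of $\Rep B_n^*$ as a separate input. Your route instead mirrors the paper's proof of Corollary \ref{fr-sn}: you determine the type of $\Rep B_n^*$ via the algebra isomorphism $B_n^* \cong k\Aa_n \otimes k^{C_2}$ (which is correct --- triviality of $\rhd$ does collapse the coproduct of $B_n$, and your identification is consistent with Lemma \ref{g-repbn} and with the $\Zz_2$-extension structure), verify the two hypotheses of Proposition \ref{cor-cyc} with $p=2$, and separately establish that $\Rep B_n^*$ is not solvable via Remark \ref{2-morita}, since $\Aa_n \bowtie C_2 \supseteq \Aa_n$ is not solvable. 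This costs you two extra inputs (Remark \ref{rmk-ansn-2} and the Morita-theoretic non-solvability), but it also buys something: it shows that a solvable category with these fusion rules would in fact be cyclically nilpotent (Proposition \ref{cyc-nilp}), it makes the argument uniform with the symmetric-group case, and your observation that --- unlike $\Rep H_5$ in Theorem \ref{dual-jn-kn} --- no $2$-dimensional simple objects occur for any $n \geq 5$ correctly explains why $n=5$ needs no separate treatment here.
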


\begin{proof} The dual Hopf algebra $B_n^*$ fits into a central exact sequence \begin{equation}\label{central-B_n*}
      k \longrightarrow k^{\Zz_2} \longrightarrow B_n \longrightarrow k{\Aa_n}
\longrightarrow k.
     \end{equation}
Therefore $\Rep B_n^*$ is a $\Zz_2$-extension of $\Rep \Aa_n$. Hence $\C$ is a $\Zz_2$-extension of a fusion category $\D$, which is Grothendieck equivalent to $\Rep \Aa_n$. 

Suppose on the contrary that $\C$ is solvable. Then so is $\D$ and therefore $\D_{pt} \neq \vect$. This implies that $\Aa_n$ has nontrivial one-dimensional representations, which is a contradiction. Then $\C$ cannot be solvable, as claimed.
\end{proof}

\begin{lemma}\label{g-repbn} The group $G(\Rep B_n)$ of invertible objects of
the category $\Rep B_n$ is isomorphic to the direct product
$\widehat{C_2} \times C_{\Aa_n}(12)$, where $C_{\Aa_n}(12)$ denotes the
centralizer in $\Aa_n$ of the transposition $(12)$.
\end{lemma}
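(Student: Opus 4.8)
The plan is to use the description of invertible objects of an abelian extension provided by Lemma \ref{inv-abel}, specialized to the split extension $B_n = k^{\Aa_n}\#kC_2$. Since $B_n$ is split, Remark \ref{inv-split} tells us that $G(\Rep B_n)$ is a \emph{semidirect} product $\widehat{C_2}\rtimes\Gamma_0$ with respect to the action $\lhd'$ of $\Gamma_0$ on $\widehat{C_2}$, where $\Gamma_0 = \{s\in C_2^{\,\Aa_n} : [\sigma_s]=1\}$ is the relevant subgroup of invariants in $C_2$ under the action $\rhd'$. My first step is therefore to identify $\Gamma_0$ explicitly and to check that the action $\lhd'$ governing the semidirect product is in fact trivial, so that the product becomes direct.

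First I would record that here the matched pair $(\Aa_n, C_2)$ is built from the conjugation action $\lhd: \Aa_n\times C_2\to\Aa_n$ (conjugation in $\Ss_n$ by $(12)$) and the \emph{trivial} action $\rhd:\Aa_n\times C_2\to C_2$. In the notation of Remark \ref{inv-split}, the dual data $\lhd',\rhd'$ are obtained from $\lhd,\rhd$ by the inversion formulas $x\lhd' s=(s^{-1}\rhd x^{-1})^{-1}$ and $x\rhd' s=(s^{-1}\lhd x^{-1})^{-1}$, with now $x\in\Aa_n$ and $s\in C_2$. Because the original $\rhd$ is trivial, the action $\lhd'$ of $\Aa_n$ on $C_2$ is trivial; consequently $C_2^{\,\Aa_n}=C_2$ and, the extension being split, $\Gamma_0=C_2$ so that $i^*$ in Lemma \ref{inv-abel} is surjective onto all of $C_2$. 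This already gives that $G(\Rep B_n)$ is an extension of $C_2\cong\widehat{C_2}$ by $\widehat{\Aa_n}$; but since $n\geq 5$ the group $\Aa_n$ is perfect, whence $\widehat{\Aa_n}=[\Aa_n,\Aa_n]^{\perp}=1$, and the formula as stated cannot be literally correct unless one instead computes $G(\Rep B_n)$ via the \emph{other} abelian extension presentation. The resolution is that Lemma \ref{inv-abel} must be applied to $B_n$ viewed as $k^{\Aa_n}\#kC_2$ in the \emph{opposite} roles $\Gamma=\Aa_n$, $F=C_2$, so that $\widehat F=\widehat{C_2}$ and $\Gamma_0\subseteq\Aa_n^{C_2}=C_{\Aa_n}(12)$ is the centralizer of the transposition under the conjugation action $\lhd$.

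The key computation, then, is to show $\Gamma_0=C_{\Aa_n}(12)$, i.e. that every element $s\in\Aa_n$ commuting with $(12)$ contributes an invertible object (the cocycle class $[\sigma_s]$ being trivial because the extension is split, so $\sigma_s=1$), and that the action $\lhd'$ of $\Gamma_0=C_{\Aa_n}(12)$ on $\widehat{C_2}$ is trivial. The triviality of $\lhd'$ on $\widehat{C_2}$ follows again from the fact that the ambient action $\rhd$ is trivial, which by the inversion formula of Remark \ref{inv-split} forces the transpose action of $\Gamma_0$ on $\widehat F=\widehat{C_2}$ to be trivial as well. Hence the semidirect product of Remark \ref{inv-split} degenerates to the direct product $\widehat{C_2}\times C_{\Aa_n}(12)$, which is exactly the claim.

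The step I expect to be the main obstacle is bookkeeping the dualization in Remark \ref{inv-split}: one must be careful to apply Lemma \ref{inv-abel} to the correct presentation of $B_n$ (with $\Gamma=\Aa_n$, $F=C_2$) and to track which of the four actions $\lhd,\rhd,\lhd',\rhd'$ controls $\Gamma_0$ versus the action on $\widehat F$. The conceptual content is light — everything reduces to ``$\rhd$ trivial $\Rightarrow$ both the twisting of the semidirect product and the obstruction to $\Gamma_0$ being the full centralizer vanish'' — but getting the indices and the inversion formulas exactly right, and confirming $\sigma_s=1$ for all relevant $s$ from splitness, is where an error would most easily creep in. Once the direct product structure and the identification $\Gamma_0=C_{\Aa_n}(12)$ are established, the lemma follows immediately.
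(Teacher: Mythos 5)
Your corrected argument is essentially identical to the paper's proof: the paper likewise identifies $G(\Rep B_n)\cong G(B_n^*)$, notes that $B_n^*\cong k^{C_2}\# k\Aa_n$ is a split abelian extension associated to the adjoint action $\rhd'$ and the trivial action $\lhd'$, and concludes from Lemma \ref{inv-abel} together with the observations of Remark \ref{inv-split} (splitness gives the semidirect product $\widehat{C_2}\rtimes\Aa_n^{C_2}=\widehat{C_2}\rtimes C_{\Aa_n}(12)$, and triviality of $\rhd$ makes the action, hence the product, trivial/direct). The misapplication in your opening paragraphs (interchanging the roles of $\Gamma$ and $F$ in Lemma \ref{inv-abel}) is a slip you detect and repair yourself, so the final argument stands as written and matches the paper's route.
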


\begin{proof} There is a group isomorphism $G(\Rep B_n) \cong G(B_n^*)$. On the other
hand, $B_n^*$ is a split abelian extension
$B_n^* \cong k^{C_2} \# k\Aa_n$, associated to the adjoint action
$\rhd': C_2 \times \Aa_n \to \Aa_n$ and the trivial action $\lhd': C_2 \times \Aa_n \to C_2$.
The result follows from Lemma
\ref{inv-abel}. \end{proof}

\begin{theorem}\label{bn} Suppose $n \geq 5$. Let $\tilde\C$ be a fusion category  Grothendieck equivalent to $\Rep B_n$. Then $\tilde\C$ is not solvable.
 \end{theorem}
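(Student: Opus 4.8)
The plan is to argue by induction on $n \geq 5$, exploiting the fact that $\Rep B_{n-1}$ sits inside $\Rep B_n$ as a fusion subcategory, so that non-solvability of the fusion rules of $\Rep B_{n-1}$ propagates to $\Rep B_n$; the real work is then concentrated in the base case $n=5$. Throughout I write $\phi$ for the automorphism of $\Aa_n$ given by conjugation by $(12)$, so that $\Rep B_n \cong \C(\Aa_n)^{\Zz_2}$ is the $\Zz_2$-equivariantization under $\phi$, as recorded in the Remark preceding Theorem \ref{bn*}.

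For the inductive step ($n \geq 6$), I would first observe that $\Aa_{n-1}=\{g\in\Aa_n:\, g(n)=n\}$ is $\phi$-stable, since $(12)$ fixes $n$, and that $\phi$ restricts on it to conjugation by $(12)\in\Ss_{n-1}$. Hence $\C(\Aa_{n-1})^{\Zz_2}\cong\Rep B_{n-1}$ is a fusion subcategory of $\Rep B_n$. By Remark \ref{biy}, the Grothendieck equivalence carries this to a fusion subcategory $\tilde\B\subseteq\tilde\C$ which is Grothendieck equivalent to $\Rep B_{n-1}$. If $\tilde\C$ were solvable, then $\tilde\B$ would be solvable, since solvability is inherited by fusion subcategories \cite[Proposition 4.1]{ENO2}; this contradicts the inductive hypothesis applied to $\tilde\B$. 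Thus everything reduces to the case $n=5$.

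For the base case, suppose $\tilde\C$ is Grothendieck equivalent to $\Rep B_5$ and solvable. By Proposition \ref{groth-eq} and Lemma \ref{g-repbn}, $\tilde\C$ is of type $(1,12;2,27)$ with $G(\tilde\C)\cong\Zz_2\times\Ss_3$, whose center is the cyclic group $\langle\delta\rangle$ generated by the unique nontrivial central invertible $\delta$. Since $\Rep B_5$ is not a $\Zz_q$-extension of any fusion category, neither is $\tilde\C$, by Remark \ref{grading}; hence the top step of a solvability series for $\tilde\C$ is a $\Zz_q$-equivariantization, and Lemma \ref{g-central} forces $\widehat{\Zz_q}\hookrightarrow Z(G(\tilde\C))\cong\Zz_2$, so $q=2$ and $\tilde\C\cong(\D')^{\Zz_2}$ with $\D'$ solvable and $\FPdim\D'=60$. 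Now all $27$ two-dimensional simple objects satisfy $\delta\otimes X\cong X$ (a fusion-rule statement valid in $\Rep B_5$, hence in $\tilde\C$), while the $12$ invertibles split into $6$ orbits of size two under $\delta\otimes(-)$. Reading off the simple objects of the de-equivariantization $\D'$ of $\tilde\C$ by $\langle\delta\rangle$, each $\delta$-stable two-dimensional object splits into two objects of dimension $1$ and each $\delta$-orbit of invertibles yields one invertible; so every simple object of $\D'$ is invertible and a dimension count gives $|\Gamma|=60$, where $\Gamma=G(\D')$. As $\D'$ is solvable and pointed, $\Gamma$ is a solvable group of order $60$, and therefore has a normal, hence characteristic, hence $\phi$-stable Sylow $5$-subgroup $P\cong\Zz_5$ (the alternative $n_5=6$ forces $\Gamma\cong\Aa_5$, which is not solvable). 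The induced involution acts on $P$ with no nonzero fixed point, since $|\Gamma^\phi|$ is coprime to $5$, so it acts by inversion; consequently $\C(P)^{\Zz_2}$ is a fusion subcategory of $\tilde\C$ of type $(1,2;2,2)$ and dimension $10$. By Proposition \ref{groth-eq} and Remark \ref{biy}, $\Rep B_5$ must then contain a fusion subcategory of the same type, which corresponds to a $\phi$-stable subgroup of order $5$ in $\Aa_5$, i.e. a Sylow $5$-subgroup normalized by the transposition $(12)$. No such subgroup exists, because the normalizer of a Sylow $5$-subgroup of $\Ss_5$ is a Frobenius group $F_{20}$ whose nontrivial elements fix at most one point and which therefore contains no transposition. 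This contradiction completes the base case.

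The main obstacle is the base case $n=5$, and within it the two delicate points: first, pinning down the de-equivariantization $\D'$, namely showing that the $\delta$-stability of all two-dimensional simples forces $\D'$ to be pointed of the expected order so that $G(\D')$ is a genuine (solvable) group; and second, establishing the exact correspondence between type $(1,2;2,2)$ fusion subcategories of $\C(\Aa_5)^{\Zz_2}$ and $(12)$-stable Sylow $5$-subgroups of $\Aa_5$, which is the step that converts the group-theoretic fact ``a solvable group of order $60$ has a normal Sylow $5$-subgroup'' into an impossible fusion subcategory. The inductive step, by contrast, is essentially formal once the embedding $\Rep B_{n-1}\subseteq\Rep B_n$ and the behavior of the Grothendieck equivalence on fusion subcategory lattices are in place.
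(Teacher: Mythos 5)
Your argument is correct in substance but follows a genuinely different route from the paper's. The paper splits into three cases: for $n\geq 7$ non-solvability is immediate because $G(\Rep B_n)\cong\widehat{C_2}\times C_{\Aa_n}(12)$ contains a copy of the non-solvable group $\Aa_{n-2}$, while $n=5$ and $n=6$ each get a long ad hoc analysis ($n=6$ being by far the heaviest, requiring the elimination of the types $(1,24;2,84)$, $(1,72;2,72)$, $(1,120;2,60)$ and of the pointed case via Hall-subgroup arguments). Your induction via the $\phi$-stable chain $\Aa_{n-1}\subseteq\Aa_n$, which gives $\Rep B_{n-1}\cong\C(\Aa_{n-1})^{\Zz_2}$ as a fusion subcategory of $\C(\Aa_n)^{\Zz_2}\cong\Rep B_n$, combined with heredity of solvability under fusion subcategories and Remark \ref{biy}, disposes of all $n\geq 6$ at once, so the paper's entire $n=6$ analysis becomes unnecessary. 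Your base case is also leaner than the paper's: reading the $\delta$-action on $\Irr(\tilde\C)$ off the fusion rules (all $27$ two-dimensional simples are $\delta$-stable, the $12$ invertibles form $6$ orbits) forces the de-equivariantization $\D'$ to be pointed of order $60$ in one stroke, whereas the paper must separately rule out a hypothetical type $(1,12;2,12)$; and the fact that every solvable group of order $60$ has a normal, hence characteristic, Sylow $5$-subgroup replaces the paper's case-by-case treatment of $\Aa_4\times\Zz_5$, $\Zz_{15}\rtimes\Zz_4$ and $\Zz_{15}\rtimes(\Zz_2\times\Zz_2)$.

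One point needs tightening, namely the correspondence you yourself flag as delicate. As literally stated --- ``type $(1,2;2,2)$ fusion subcategories of $\C(\Aa_5)^{\Zz_2}$ correspond to $(12)$-stable Sylow $5$-subgroups of $\Aa_5$'' --- the claim is not obvious: a subcategory of that type whose invertible part is generated by a non-central element of $G(\Rep B_5)\cong\widehat{C_2}\times\Ss_3$ would not arise from a stable subgroup, and nothing you wrote excludes such subcategories in general. What is true, and what your construction actually provides, is the correspondence for subcategories containing the central invertible: your $\C(P,\omega|_P)^{\Zz_2}$ contains $\Rep\Zz_2=\langle\delta_{\tilde\C}\rangle$, and since a Grothendieck equivalence restricts to a group isomorphism on invertibles (Proposition \ref{groth-eq}) it carries the unique nontrivial central invertible to the unique nontrivial central invertible; hence the transferred subcategory $\B\subseteq\Rep B_5$ contains $\delta$ and therefore the central Tannakian subcategory $\Rep\Zz_2$. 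Its de-equivariantization $\B_{\Zz_2}\subseteq\C(\Aa_5)$ is then a $\Zz_2$-stable fusion subcategory of Frobenius--Perron dimension $5$, i.e.\ $\C(H)$ for a $(12)$-stable Sylow $5$-subgroup $H$ --- precisely the mechanism the paper uses in its own $n=5$ and $n=6$ arguments. With that bookkeeping made explicit, your proof is complete.
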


\begin{proof} Suppose first that $n \geq 7$.
By Lemma \ref{g-repbn}, $G(\Rep B_n) \cong \widehat{C_2} \times C_{\Aa_n}(12)$.
Note that
$C_{\Aa_n}(12)$ contains the subgroup $\{\sigma \in \Aa_n:\, \sigma(1) = 1,
\sigma(2) = 2\} \cong \Aa_{n-2}$. Since $n \geq 7$, the group $\Aa_{n-2}$ is not
solvable. Then $G(\tilde\C)$ is not solvable neither and then $\tilde\C$ is not
solvable.

\medbreak It remains to consider the cases $n = 5$ and $6$.
It follows from Lemma \ref{g-repbn} that
$G(\Rep B_5) \cong \widehat {C_2} \times \Ss_3$ is non-abelian of order 12. Hence $\Rep B_5$ is of type $(1,12;2,27)$.
Similarly,  $G(\Rep B_6)  \cong \widehat {C_2} \times \Ss_4$ is non-abelian of order 48 and $\Rep B_6$ is of type $(1,48; 2,168)$.

Suppose that there exist a solvable fusion category $\tilde \C$ which is Grothendieck equivalent to $\C$, where $\C = \Rep B_5$ or $\Rep B_6$. By Proposition \ref{groth-eq}, $G(\tilde\C) \cong G(\C)$. Since, for every prime number $q$, $\C$ is not a $\Zz_q$-extension of any fusion category, we have that $\tilde\C$ must be a $\Zz_q$-equivariantization of a fusion subcategory $\D$, and $\D$ is also a solvable fusion category.
Moreover, $q=2$ because $\Zz_q \subseteq Z(G(\tilde\C))$ and $\FPdim \D= 60$ or  $\FPdim \D= 360$, respectively.
Then there is an exact sequence of fusion categories
$$\Rep\Zz_2 \to \tilde\C \to \D.$$
Since $\cd(\tilde\C)=\{1,2\}$, it follows that $\cd(\D)=\{1,2\}$. The previous exact sequence induces an exact sequence of groups
$$1 \to \widehat\Zz_2 \to G(\tilde\C) \to G_0(\D) \to 1,$$
where $\widehat\Zz_2$ denotes the group of invertible characters of $\Zz_2$ and $G_0(\D)$ is
the subgroup of $G(\D)$ consisting of isomorphism classes of invertible objects which are $\Zz_2$-equivariant. See \cite[Remark 3.1]{fusionrules-equiv}. As $\Zz_2$ is a cyclic group, we have that $G_0(\D)$ coincides with the subgroup of fixed points of the induced action of $\Zz_2$ on the group of invertible objects of $\D$.

Observe that, since $\C$ is also a $\Zz_2$-equivariantization of $\C_{\Zz_2} \cong \C(\Aa_5)$ or $\C(\Aa_6)$, respectively, then the group $G(\C) \cong G(\tilde \C)$ also fits into an exact sequence
$$1 \to \widehat\Zz_2 \to G(\C) \to G_0(\C_{\Zz_2}) \to 1.$$
In this case, the subgroup  $G_0(\C_{\Zz_2})$ is isomorphic to $C_{\Aa_5}(12)$ or $C_{\Aa_6}(12)$, respectively.
In addition, the group $G(\C)$ contains a unique normal subgroup of order 2. Therefore, $G_0(\D) \cong G_0(\C_{\Zz_2})$ is a non-abelian subgroup of $G(\D)$.

\medbreak Suppose first that $n = 5$. In this case $\C =\Rep B_5 \cong \C(\Aa_5)^{\Zz_2}$.
In particular, $G_0(\D)$ is a subgroup of order 6 of $G(\D)$. A counting argument shows that $G(\D)$ can be of type $(1,12;2,12)$ or else $\D$ is pointed.
Suppose that $\D$ is of type $(1,12;2,12)$. Then $\D_{pt}^{\Zz_2} \subseteq \tilde\C$ is a fusion subcategory of dimension $24$ and type $(1,12;2,3)$, containing the central subcategory $\Rep\Zz_2$. Therefore $\C$ has a fusion subcategory $\B$ of type $(1,12;2,3)$  containing the central subcategory $\Rep\Zz_2$. (In fact, as observed before, $G(\C)$ contains a unique normal subgroup of order $2$; see Lemma \ref{g-central}.)

Consider the de-equivariantization $\B_{\Zz_2} \subseteq \C(\Aa_5)$. We have that $\dim\B_{\Zz_2}=12$ and thus
$\B_{\Zz_2}=\C(H)$, where $H$ is a ($\Zz_2$-stable) subgroup of $\Aa_5$ of order 12. Since $G(\Rep B_5) \subseteq \B$, then the subgroup $H$ contains the invariant sugroup $\Aa_5^{\Zz_2} = C_{\Aa_5}(12) \cong \Ss_3$.
On the other hand, every subgroup of order $12$ of $\Aa_5$ is isomorphic to $\Aa_4$, then $H\cong \Aa_4$. This leads to a contradiction, because $\Aa_4$ has no subgroup of order $6$. This proves that $\D$ cannot be of type $(1,12;2,12)$.

Therefore $\D$ must be a pointed fusion category. In this case $\D = \C(\Gamma,\omega)$, where $\omega:\Gamma \times \Gamma \times \Gamma \to k^*$ is a 3-cocyle and $\Gamma$ is a solvable non-abelian subgroup of order 60. In addition  $\Zz_2$ acts on $\Gamma$ by group automorphisms  and $\Gamma^{\Zz_2} \cong \Ss_3$.
Since $\Gamma \neq \Aa_5$, $\Gamma$ can be isomorphic to $\Aa_4 \times \Zz_5$, $\Zz_{15} \rtimes \Zz_4$ or $\Zz_{15} \rtimes (\Zz_2 \times \Zz_2)$.

If $\Gamma \cong \Aa_4 \times \Zz_5$, the action of $\Zz_2$ must fix $\Aa_4$ and $\Zz_5$. Since $|\Gamma^{\Zz_2}|=6$ then $\Gamma^{\Zz_2} \subseteq \Aa_4$, and we reach a contradiction. Therefore $\Gamma \cong \Zz_{15} \rtimes \Zz_4$ or $\Zz_{15} \rtimes (\Zz_2 \times \Zz_2)$. In this case $\Gamma$ has a unique subgroup $L$ of order 15, and then $L$ is $\Zz_2$-stable and $\C(L)^{\Zz_2}$ is a fusion subcategory of $\tilde\C$ of dimension $30$. This implies that $\C$ has a fusion  subcategory of dimension 30. Such fusion subcategory must correspond to a quotient Hopf algebra of $B_5$ of dimension $30$, which is a contradiction, because $Z(B_5)\cap G(B_5) = \{1\}$. See \cite[Corollary 1.4.3]{ssld}. Thus  $\D$ cannot be pointed.
This proves that if $\tilde\C$ is Grothendieck equivalent to $\Rep B_5$ then $\tilde\C$ is not solvable.

\medbreak Finally, let us consider the case $n = 6$. In this case we have $\C:=\Rep B_6 \cong \C(\Aa_6)^{\Zz_2}$.
On the other hand, $G_0(\D)$ is a subgroup of order 24 of $G(\D)$. As before, one can see that $\D$ must be of type $(1,24;2,84)$, $(1,72;2,72)$, $(1,120;2,60)$, or else $\D$ is pointed.

Suppose that $\D$ is of type $(1,72;2,72)$ or $(1,120;2,60)$. In these cases $\D_{pt}^{\Zz_2} \subseteq \tilde\C$ is a fusion subcategory of dimension  144 or 240, respectively, containing the central subcategory $\Rep\Zz_2$. Therefore $\C$ has a fusion subcategory $\B$ of dimension 144 or 240, respectively, containing the central subcategory $\Rep\Zz_2$. The de-equivariantization $\B_{\Zz_2} \subseteq \C(\Aa_6)$ is of dimension $\dim\B_{\Zz_2}=72$ or 120, respectively. Then $\B_{\Zz_2}=\C(H)$, where $H$ is a $\Zz_2$-stable subgroup of $\Aa_6$ of order $72$ or 120, respectively. Since $\Aa_6$ has no subgroups of order 72 or 120, it follows that these types are not possible for $\D$.

Suppose next that $\D$ is of type $(1,24;2,84)$.  It follows from the description of the simple objects of $\D^G$ and the fact that $\cd(\D) = \cd(\C) =\{1,2\}$, that $\Zz_2$ acts trivially on the set $\Irr(\D)$; see \eqref{dim-equiv}. In particular, $G(\D) = G_0(\D) \cong C_{\Aa_6}(12) \cong \Ss_4$.

Since $\D$ is  solvable, then it is a $\Zz_p$-extension or a $\Zz_p$-equivariantization, where $p$ is a prime number that divides the dimension of $\D$, which is $360$. If $\D$ were a $\Zz_p$-equivariantization then, by Lemma \ref{g-central}, $\Zz_p \subseteq Z(G(\D))$, which is a contradiction because $G(\D) \cong \Ss_4$. Therefore $\D$ must be a $\Zz_p$-extension of a fusion subcategory $\D_e$.
The fusion subcategory $\D_e$ is  of dimension 72, 120 or 180. Furthermore, $\D_e$ must be stable under the action of  $\Zz_2$, since this action is trivial on $\Irr(\D)$. As before, this implies that $\C$ contains a fusion subcategory $\B$ of dimension 144, 240 or 360, respectively, containing the central subcategory $\Rep\Zz_2$. Hence $\B_{\Zz_2}=\C(H)$, where $H$ is a $\Zz_2$-stable subgroup of $\Aa_6$ with order 72, 120 or 180, respectively. But $\Aa_6$ has no subgroups neither of order 72, 120 nor 180, therefore the type $(1,24;2,84)$ is also impossible for $\D$.

\medbreak Suppose finally that $\D$ is a solvable pointed fusion category. We have $\D = \C(\Gamma,\omega)$, where $\omega:\Gamma \times \Gamma \times \Gamma \to k^*$ is a 3-cocyle and $\Gamma$ is a solvable non-abelian subgroup of order $360$. In addition  $\Zz_2$ acts on $\Gamma$ by group automorphisms and the subgroup $\Gamma_0$ of fixed points of $\Gamma$ under this $\Zz_2$-action is of order  $24$. Let $S$ be a Sylow 5-subgroup of $\Gamma$. Since $\Gamma$ is  solvable, there exist $H$, a Hall $\{2,5\}$-subgroup of $\Gamma$, and $K$, a Hall $\{3,5\}$-subgroup of $\Gamma$, such that $S \subseteq H$ and $S \subseteq K$. A counting argument shows that $S\unlhd H$ and $S\unlhd K$ and so $S\unlhd \langle H,K \rangle = \Gamma $. Hence $S$ is the unique Sylow 5-subgroup of $\Gamma$ and then $S$ is $\Zz_2$-stable. In this case $\C(S,\omega|_S)^{\Zz_2}$ is a fusion subcategory of $\tilde\C$ of  dimension $10$, containing the central subcategory $\Rep\Zz_2$. Therefore $\C$ has a fusion subcategory $\B$ with dimension 10, containing the central subcategory $\Rep\Zz_2$. The de-equivariantization $\B_{\Zz_2} \subseteq \C(\Aa_6)$ is of dimension  $5$. Then $\B_{\Zz_2}=\C(T)$, where $T$ is a $\Zz_2$-stable subgroup of $\Aa_6$ of order $5$. We have that $T=\{\id,(abcde),(acebd),(adbec),(aedcb)\}$, and without loss of generality we may assume $a=1$ and $b=2$. We thus reach a contradiction, since $(12)(12cde)(12)=(21cde) \neq (1ce2d)$. This proves that $\widetilde\C$ cannot be solvable and finishes the proof of the theorem.
 \end{proof}

\section{Solvability and fusion rules of a braided fusion category}\label{solv-fr-bfc}

Let $\C$ be a fusion category. Suppose that $\FPdim \C$ is an integer (which is
always the case if $\C$ is solvable). Then the adjoint subcategory $\C_{ad}$ is
integral \cite[Proposition 8.27]{ENO}.

Assume that $\C$ is braided. Recall that $\C$ is solvable and integral, then
either it is pointed or it contains a nontrivial Tannakian subcategory
\cite[Proposition 5.2]{witt-wgt}.

\begin{theorem}\label{e-tann} Let $\C$, $\tilde\C$  be Grothendieck
equivalent braided fusion categories. Suppose that $\C$ is solvable. Then the
following hold:

(i) $\tilde \C_{pt}$ is a solvable fusion category and it is not trivial if
$\tilde \C$ is not trivial.

(ii) If $\tilde \C$ is not pointed, then it contains a nontrivial Tannakian
subcategory.
 \end{theorem}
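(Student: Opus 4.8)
The plan is to transport across the Grothendieck equivalence $f\colon\Irr\C\to\Irr\tilde\C$ every piece of fusion--rule data available, and to feed it into the braided lemmas above. Recall from Proposition \ref{groth-eq} that $f$ preserves Frobenius--Perron dimensions of objects and of fusion subcategories, carries $\C_{pt}$ to $\tilde\C_{pt}$ and $\C_{ad}$ to $\tilde\C_{ad}$, and induces an isomorphism $U(\C)\cong U(\tilde\C)$ compatible with the universal gradings (Remark \ref{grading}). For part (i): by Proposition \ref{groth-eq}(ii) the map $f$ restricts to a Grothendieck equivalence $\C_{pt}\to\tilde\C_{pt}$, so $G(\tilde\C)\cong G(\C)$; since $\tilde\C$ is braided this group is abelian, and a pointed category over an abelian group is an iterated $\Zz_p$--extension of $\vect$, hence cyclically nilpotent and in particular solvable. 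If moreover $\tilde\C\neq\vect$, then $\C\neq\vect$ is solvable, so $\C_{pt}\neq\vect$ by \cite{ENO2}, and therefore $\tilde\C_{pt}=f(\C_{pt})\neq\vect$.

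For part (ii) I would induct on $\FPdim\tilde\C$, assuming $\tilde\C$ non--pointed. The key reduction is that a nontrivial symmetric fusion subcategory $\mathcal S\subseteq\tilde\C$ always contains a nontrivial Tannakian subcategory \emph{unless} $\mathcal S\cong\svect$: by \cite{deligne}, $\mathcal S\cong\Rep(A,u)$ with $A=G(\mathcal S)$ abelian, and then $\Rep(A/\langle u\rangle)$ is a nontrivial Tannakian subcategory whenever $\mathcal S\neq\vect,\svect$. So it suffices to exhibit a nontrivial symmetric subcategory of $\tilde\C$ distinct from $\svect$. I split according to the universal grading. If $\tilde\C_{ad}\subsetneq\tilde\C$ and $\tilde\C_{ad}$ is non--pointed, then $\tilde\C_{ad}=f(\C_{ad})$ is Grothendieck equivalent to $\C_{ad}$, a fusion subcategory of $\C$ and hence a solvable braided category \cite{ENO2}, of strictly smaller dimension; the inductive hypothesis produces a nontrivial Tannakian subcategory inside $\tilde\C_{ad}\subseteq\tilde\C$. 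If $\tilde\C_{ad}\subsetneq\tilde\C$ but $\tilde\C_{ad}$ is pointed, then $\tilde\C_{ad}\subseteq\tilde\C_{pt}$, so $\tilde\C_{ad}=\tilde\C_{ad}\cap\tilde\C_{pt}$ is symmetric by the lemma preceding Lemma \ref{cent-cpt}, and it is nontrivial because $\tilde\C$ is not pointed. Finally, if $\tilde\C_{ad}=\tilde\C$, then Lemma \ref{cent-cpt} gives $(\tilde\C_{pt})'=\tilde\C$, which is precisely the statement $\tilde\C_{pt}\subseteq\tilde\C'$; thus $\tilde\C_{pt}$ is symmetric, and it is nontrivial by part (i). In each of the last two cases we have a nontrivial symmetric subcategory and conclude, provided it is not $\svect$.

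The main obstacle is exactly the residual case in which the symmetric subcategory so produced is $\cong\svect$; equivalently, $\tilde\C$ carries a fermion $f$ (an invertible object with $c_{f,f}=-\id$) whose only symmetric companion is $\langle f\rangle\cong\svect$. This is where integrality is indispensable: since $\C$ is solvable we have $\FPdim\C\in\Zz$, and (as in the hypotheses of \cite[Proposition 5.2]{witt-wgt}) we work in the integral setting, so that $\tilde\C$ is integral as well by Proposition \ref{groth-eq}(i). The plan is then to use the structure theory of integral braided categories --- gauging the fermion $f$, i.e. passing to the non--degenerate integral braided category obtained after removing $\langle f\rangle$ from the symmetric center --- to reduce to an integral braided category $\tilde\C_0$ having no nontrivial invertible objects and no nontrivial Tannakian subcategory. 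Such a $\tilde\C_0$, if non--pointed, is not solvable by \cite[Proposition 5.2]{witt-wgt}; but its fusion rules are realized inside those of the solvable category $\C$, which forces all its fusion subcategories to be solvable and yields a contradiction. Making this reduction precise, and in particular ruling out that the fusion rules of a non--pointed, invertible--free, non--solvable braided category can occur inside those of a solvable one, is the delicate point of the argument, and is exactly where the hypothesis that $\C$ (rather than merely $\tilde\C$) is solvable is used.
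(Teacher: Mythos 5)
Part (i) of your proposal is correct and essentially the paper's argument: the Grothendieck equivalence identifies pointed parts and gives $G(\tilde \C)\cong G(\C)$, a nontrivial pointed category over an abelian (or merely solvable) group is solvable, and nontriviality follows from \cite[Proposition 4.5 (iv)]{ENO2}. Your reductions for part (ii) also track the paper's: induction on Frobenius--Perron dimension, the trichotomy through $\tilde\C_{ad}$, the symmetric subcategories $\tilde\C_{ad}\cap\tilde\C_{pt}$ (respectively $\tilde\C_{pt}$ when $\tilde\C_{ad}=\tilde\C$, via Lemma \ref{cent-cpt}), and Deligne's theorem to extract a Tannakian subcategory from any symmetric subcategory of dimension greater than $2$. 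However, the residual case in which every symmetric subcategory you can produce is equivalent to $\svect$ is precisely where the entire content of the paper's proof lies, and you leave it as an unexecuted ``plan.'' The paper handles it as follows: assuming $\tilde\C$ contains no nontrivial Tannakian subcategory (after reducing to $\tilde\C_{ad}=\tilde\C$, integrality, and the absence of proper non-degenerate subcategories), \cite[Lemma 7.1]{witt-wgt} forces $\tilde\C_{pt}=\tilde\C'\cong\svect$ and $G[\tilde X]=\uno$ for every simple $\tilde X$ (here $G[X]$ is the stabilizer of $X$ under tensoring by invertibles); these are fusion-rule data, so they transport to $\C$, giving $\FPdim\C_{pt}=2$ and $G[X]=\uno$ for all simples of $\C$. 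Since $\C$ is solvable with $\C_{ad}=\C$, it cannot be a graded extension, hence it is a $\Zz_2$-equivariantization; Lemma \ref{simple-p} then yields a simple object of dimension $2$, and the Nichols--Richmond theorem \cite{NR}, \cite{fusion-lowdim} produces a subcategory of one of three explicit types, each of which is ruled out (by the stabilizer condition, by $\FPdim\C_{pt}=2$, and by solvability, respectively), giving the contradiction. None of these ingredients appears in your proposal, so the case that actually requires proof is missing.

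Moreover, the route you sketch for that case would not go through as stated. First, one cannot ``gauge'' or de-equivariantize by $\svect$ as if it were Tannakian: the fermionic quotient of a braided category by $\svect\subseteq\tilde\C'$ is not an ordinary braided fusion category, which is exactly why Tannakian subcategories (and not merely symmetric ones) are what the theorem and the de-equivariantization machinery require. Second, even granting the existence of such a quotient $\tilde\C_0$, your key claim that ``its fusion rules are realized inside those of the solvable category $\C$'' is unjustified: a de-equivariantization is a quotient, not a subcategory, so its fusion rules do not embed into those of $\tilde\C$, and a Grothendieck equivalence between $\C$ and $\tilde\C$ gives no control whatsoever over quotients of $\tilde\C$. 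The paper avoids this entirely by locating the contradiction inside $\C$ itself, using its solvability. (A minor further point: your invocation of Deligne's theorem with $\mathcal S\cong\Rep(A,u)$, $A=G(\mathcal S)$ abelian, is imprecise---in general $\mathcal S\cong\Rep(G,u)$ with $G$ possibly non-abelian and not equal to $G(\mathcal S)$---though this is harmless here since the symmetric subcategories you produce are pointed.)
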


\begin{proof} Since $\C$ is solvable, \cite[Proposition 4.5 (iv)]{ENO2} implies
that $\C_{pt}
\neq \vect$. In addition $\C_{pt}$ is a solvable fusion category. Hence $\tilde
\C_{pt} \neq \vect$. We have $\C_{pt} \cong \C(G(\C),
\omega)$ for some invertible 3-cocycle $\omega$ on $G(\C)$. By assumption $\C$
is solvable, hence the group $G(\C)$ is solvable.

Moreover, $\tilde \C_{pt}$ is Grothendieck equivalent to
$\C_{pt}$ and therefore there exists an isomorphism of groups $G(\C) \cong
G(\tilde \C)$. Hence $G(\tilde
\C)$, and \textit{a fortiori} also $\tilde \C_{pt}$, are solvable.
This shows part (i).

\medbreak Suppose that $\tilde \C$ is not pointed, so that $\C$ is not pointed
neither. 
Note that $\C_{ad}$ is Grothendieck equivalent to $\tilde \C_{ad}$. If $\C_{ad}$
is a proper fusion subcategory, then an inductive argument implies that $\tilde
\C_{ad}$ is solvable and therefore so is $\tilde \C$, because it is a $U(\tilde
\C)$-extension of
$\tilde \C_{ad}$ and the universal grading group $U(\tilde \C)$ is abelian.
Hence we may assume that
$\tilde \C_{ad} = \tilde \C$ (in particular, the same is true for $\C$). Since
$\C$ is solvable, then its Frobenius-Perron dimension is an integer and
therefore $\C$ is in fact integral. Then $\tilde \C$ is also integral. To show part (ii) we may assume that $\tilde \C$ is
not solvable, in view of \cite[Proposition 5.2]{witt-wgt}.

\medbreak By part (i), $\tilde \C_{pt}$ is solvable and not trivial.
Note that $\tilde \C$ cannot contain any nontrivial non-degenerate fusion
subcategory. In fact, if $\C$ were non-degenerate, then $\tilde\C_{ad} = \tilde
\C_{pt}' \subsetneq \tilde \C$, against the assumption. If, on the other hand,
$\tilde \D \subseteq \tilde \C$ were a proper non-degenerate subcategory, then
$\tilde \C \cong \tilde \D \boxtimes \tilde \D'$, and both $\tilde \D$ and
$\tilde \D'$ are Grothendieck equivalent to fusion subcategories of $\C$. An
inductive argument implies that $\tilde \D$ and $\tilde \D'$ are solvable and
therefore so is $\tilde \C$.

Suppose that $\tilde \C$ contains no nontrivial Tannakian subcategory. It
follows from \cite[Lemma 7.1]{witt-wgt} that $\tilde \C_{pt} = \tilde \C' \cong
\svect$ and $G[\tilde X] = \uno$, for all simple object $\tilde X$ of $\tilde
\C$.
This implies that $\FPdim \C_{pt} = 2$ and $G[X] = \uno$, for all simple object
$X$ of $\C$.

On the other hand, since $\C$ is solvable and $\C_{ad} = \C$, then $\C$ must be
a
$\Zz_p$-equivariantization of a fusion category $\D$ for some prime number $p$.
In particular $\C$ contains a (pointed) fusion subcategory of dimension  $2$, and therefore $p = 2$.
It follows from Lemma \ref{simple-p} that $\C$ has a simple object $X$ of Frobenius-Perron dimension $2$.
In addition, for every such simple object $X$, we have $G[X] = \uno$.

The Nichols-Richmond theorem implies that $\C$ contains a fusion subcategory
$\overline\C$ of type $(1, 2; 2, 1; 3, 2)$ or $(1, 3; 3, 1)$ or $(1, 1; 3, 2; 4,
1; 5, 1)$; see \cite[Theorem 11]{NR}, \cite[Theorem 3.4]{fusion-lowdim}. The first possibility cannot
hold in this case, because the unique simple object of dimension $2$ of
$\overline\C$ is necessarily stable under the action of $G(\overline\C) \cong
\Zz_2$. The second possibility contradicts the assumption that $\FPdim \C_{pt} =
2$. The third possibility is also discarded because $\overline\C$ must be a
solvable fusion category, whence $\overline\C_{pt} \neq \vect$.
This contradiction shows that $\tilde\C$ must contain a Tannakian subcategory,
and hence (ii) holds.
\end{proof}

\begin{proposition}\label{e-prime} Let $\C$ be a braided fusion category.
Suppose that $\E
\subseteq \C$ is a Tannakian subcategory. Then  $\C$ is solvable if and only if
$\E'$ is solvable.
\end{proposition}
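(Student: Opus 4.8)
The plan is to deduce both implications at once by de-equivariantizing with respect to a finite group $G$ with $\E \cong \Rep G$ as symmetric categories, and comparing two instances of the solvability criterion recalled before the statement. The starting observation is that a Tannakian subcategory is symmetric and hence centralizes itself, so that $\E \subseteq \E'$; in particular $\E$ is a Tannakian subcategory of the braided fusion category $\E'$ as well, with the braiding restricted from $\C$.

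First I would apply the criterion of \cite{witt-wgt} to $\E \cong \Rep G \subseteq \C$: the category $\C$ is solvable if and only if its de-equivariantized neutral component $\C^0_G$ is solvable and $G$ is solvable. Since $\C^0_G$ coincides with the de-equivariantization ${\E'}_G$ of the centralizer $\E'$ by $G$, this says that $\C$ is solvable precisely when ${\E'}_G$ is solvable and $G$ is solvable.

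Next I would apply the very same criterion to the braided fusion category $\E'$ equipped with its Tannakian subcategory $\E$. The crucial point is that the M\"uger centralizer of $\E$ computed inside $\E'$ is the whole of $\E'$, since by definition every object of $\E'$ centralizes $\E$. Consequently the $G$-grading on the de-equivariantization $(\E')_G$ is concentrated in its neutral component, that is, $(\E')^0_G = (\E')_G$, and by the identification recalled above this common category is exactly ${\E'}_G = \C^0_G$. The criterion now reads: $\E'$ is solvable if and only if ${\E'}_G$ is solvable and $G$ is solvable. Comparing the two equivalences, both amount to the single condition that ${\E'}_G$ and $G$ be solvable, whence $\C$ is solvable if and only if $\E'$ is solvable.

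The only step requiring genuine care is the identification $(\E')^0_G = {\E'}_G = \C^0_G$, which rests on the equality of $\E'$ with the centralizer of $\E$ inside $\E'$; beyond this I would merely check that the de-equivariantization formalism applies verbatim to $\E'$ --- that $\E$ is Tannakian in $\E'$ and that $\E' \cong ({\E'}_G)^G$ --- so that the criterion of \cite{witt-wgt} may be invoked for $\E'$ exactly as for $\C$. With these points in place the conclusion is immediate.
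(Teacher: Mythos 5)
Your proof is correct, and it reaches the conclusion by a genuinely different (more formal) route than the paper. The paper splits the statement: the ``only if'' direction is immediate since fusion subcategories of solvable categories are solvable, and for the ``if'' direction it builds $\C$ back up from $\E'$ by hand --- it uses $(\C_G^0)^G \cong \E'$ (\cite[Proposition 4.56 (i)]{DGNOI}) to conclude that $\C_G^0$ is solvable, then invokes the stability of solvability under extensions and equivariantizations by solvable groups (\cite[Proposition 4.1]{ENO2}) to get solvability of $\C_G$ and finally of $\C \cong (\C_G)^G$. You instead apply the solvability criterion of \cite{witt-wgt} (recalled in the paper's preliminaries) twice --- once to the pair $\E \subseteq \C$ and once to the pair $\E \subseteq \E'$ --- and your pivot is the clean observation that the centralizer of $\E$ computed inside $\E'$ is all of $\E'$, so the $G$-grading on $(\E')_G$ is concentrated in the neutral component and $(\E')^0_G = (\E')_G = \C_G^0$; both instances of the criterion then collapse to the single condition ``$(\E')_G$ and $G$ are solvable.'' The auxiliary checks you flag are exactly the right ones and both hold: $\E$ is Tannakian in $\E'$ because symmetry gives $\E \subseteq \E'$ with the restricted braiding, and the de-equivariantization $(\E')_G$ is unambiguous because it is the category of modules over an algebra $A$ lying in $\E \subseteq \E'$. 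What each approach buys: yours is shorter, treats both directions symmetrically, and isolates a reusable formal principle; the paper's is more self-contained, since the criterion of \cite{witt-wgt} that you import wholesale is itself proved by essentially the same de-equivariantization-and-rebuild argument that the paper writes out, and the paper's version makes explicit precisely which stability properties of solvability are being used.
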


\begin{proof} If $\C$ is solvable, then every fusion subcategory of $\C$ is
solvable. In particular, $\E'$ is solvable, showing the 'only if' direction.
Conversely, suppose that $\E'$ is solvable. Since $\E$ is a Tannakian
subcategory, it is symmetric, and therefore $\E \subseteq \E'$. Then $\E$ is
solvable. Let $G$ be a finite group such that $\E \cong \Rep G$ as braided
fusion categories. Then the group $G$ is solvable, by \cite[Proposition 4.5
(ii)]{ENO2}.

Consider the $G$-crossed braided fusion category $\C_G$, so that $\C \cong
(\C_G)^G$ is an equivariantization. Furthermore, the category $\C_G$ is a
$G$-graded fusion category, and the neutral component $\C_G^0$ of this grading
satisfies $(\C_G^0)^G \cong \E'$ \cite[Proposition 4.56 (i)]{DGNOI}. Therefore
$\C_G^0$ is solvable. Since $G$ is solvable, then so is $\C_G$ and also $\C
\cong (\C_G)^G$. This proves the 'if' direction and finishes the proof of the
proposition.
\end{proof}

 \begin{remark} Let $\tilde \C$ be a braided fusion category. Suppose that $\tilde \C$ is Grothendieck equivalent to a solvable braided fusion category $\C$ and $\tilde \C$ is not solvable.
 Assume in addition that $\FPdim \tilde \C$ is minimum with respect to these properties.
 
Then $\tilde \C$ must satisfy the following conditions:

(i) $\tilde \C_{ad} = \tilde \C$.

(ii) $\tilde \C_{pt} \neq \vect$ is a solvable fusion subcategory  and $(\tilde
\C_{pt})' = \tilde \C$.

(iii) $\tilde \C$ contains a nontrivial Tannakian subcategory and for every
Tannakian subcategory $\tilde \E$, $\tilde \E' = \tilde \C$.

(iv) $\tilde \C$ contains no proper non-degenerate fusion subcategories.

\medbreak Indeed, (i) and (iv) can be shown as in the proof Theorem \ref{e-tann},   (ii) follows from (i) and Lemma \ref{cent-cpt},  and (iii) follow from Theorem \ref{e-tann} and Proposition \ref{e-prime}.
\end{remark}

\section{The character table of a spherical fusion category}\label{s-char-tbl}

\subsection{Spherical fusion categories}

A \emph{spherical structure} on a fusion category $\C$ is a natural  isomorphism of tensor functors $\psi: \id_\C \to (\; )^{**}$ such that $$d_+(X) = d_-(X),$$ for all objects $X$ of $\C$, where $d_\pm(X) = \Tr_\pm(\id_X)$, and for every endomorphism $f: X \to X$, $\Tr_\pm(f) \in k$ are defined as the compositions
$$\Tr_+(f): \uno \longrightarrow X \otimes X^* \overset{\psi_Xf \otimes \id}\longrightarrow X^{**} \otimes X \longrightarrow \uno,$$
$$\Tr_-(f): \uno \longrightarrow X^* \otimes X^{**} \overset{\id\otimes f\psi_X^{-1} }\longrightarrow X^{*} \otimes X \longrightarrow \uno.$$
Let $\C$ be a spherical fusion category, that is, a fusion category endowed with a spherical structure. The quantum dimension of $X \in \C$ is denoted by $d_X : = d_+(X) = d_-(X)$, and the quantum dimension of $\C$ is defined in the form $\dim \C = \sum_{X \in \Irr(\C)} d_X^2$. The \emph{quantum trace} of an endomorphism $f: X \to X$ is denoted by $\Tr(f)  =\Tr_+(f) = \Tr_-(f)$. See \cite[Subsection 2.4.3]{DGNOI}, \cite[Subsection 2.2]{ENO}.

\medbreak Recall that a fusion category is called \emph{pseudo-unitary} if its global dimension coincides with its Frobenius-Perron dimension. By \cite[Proposition 8.24]{ENO}, every weakly integral fusion category is pseudo-unitary. It is shown in \cite[Proposition 8.23]{ENO} that every pseudo-unitary fusion category admits a canonical spherical structure with respect to which quantum dimensions of objects coincide with their Frobenius-Perron dimensions.

\subsection{Modular categories and $S$-matrices}

A \emph{premodular} category is a braided fusion category equipped with a spherical structure. Equivalently, $\C$ is a braided fusion category endowed with a \emph{ribbon structure}, that is, a natural automorphism $\theta: \id_\C \to \id_\C$ satisfying
\begin{equation}\label{bal}\theta_{X \otimes Y} = (\theta_X \otimes \theta_Y) c_{Y, X}c_{X, Y}, \quad \theta_X^* = \theta_{X^*},\end{equation}
for all objects $X, Y$ of $\C$ \cite{bruguieres}, \cite[Subsection 2.8.2]{DGNOI}.

Let $\C$ be a premodular category. The \emph{central charge} of $\C$ is the ratio
$$\xi(\C) = \frac{\tau^+(\C)}{\sqrt{\dim \C}},$$ where $\sqrt{\dim \C}$ is the positive square root and $\tau^+(\C) = \sum_{X \in \Irr(\C)}\theta_Xd_X^2$. See \cite[Subsection 6.2]{DGNOI}.

\medbreak The $S$-matrix of $\C$ is defined in the form $S = (S_{XY})_{X, Y \in \Irr(\C)}$, where for all $X, Y \in \Irr(\C)$, $$S_{X, Y} = \Tr(c_{Y, X}c_{X, Y}) \in k$$ is the quantum trace of the squared braiding $c_{Y, X}c_{X, Y}:X \otimes Y \to Y \otimes X$.

\medbreak A premodular  category $\C$ is called \emph{modular} if the $S$-matrix is non-degenerate \cite{turaev-b} or, equivalently, if it is non-degenerate \cite[Proposition 3.7]{DGNOI}.

If $\C$ is a spherical fusion category, then its Drinfeld center $\Z(\C)$ is a modular category of global dimension $\dim \Z(\C) = (\dim \C)^2$ and central charge $\xi(\Z(\C)) = 1$ \cite{mueger-ii}, \cite[Example 6.9]{DGNOI}.

Suppose that $\C$ is a modular category.  Then for every $X, Y, Z \in \Irr(\C)$, the multiplicity $N_{XY}^Z$ of $Z$ in the tensor product $X \otimes Y$ is given by the \emph{Verlinde formula}:
\begin{equation}\label{verlinde}N_{XY}^Z = \frac{1}{\dim \C}\sum_{T \in \Irr(\C)} \frac{S_{XT} \, S_{YT} \, S_{Z^*T}}{d_T},\end{equation}
where $d_T$ denotes the quantum dimension of the object $T$ and $\dim \C$ is the quantum dimension of $\C$. See \cite[Theorem 3.1.14]{BK}.

\subsection{$S$-equivalence of spherical fusion categories}\label{s-equiv}

\begin{definition} Let $\C$ and $\D$ be spherical fusion categories. We shall say that $\C$ and $\D$ are \emph{$S$-equivalent} if there exists a bijection $f: \Irr(\Z(\C)) \to \Irr(\Z(\D))$ such that $f(\uno) = \uno$ and $S_{f(X), f(Y)} = S_{X, Y}$, for all $X, Y \in \Irr(\C)$.
\end{definition}

The following lemma summarizes some of the main properties of $S$-equivalence.

\begin{lemma}\label{s-eq} Let $\C$ and $\D$ be spherical fusion categories and suppose that $f:\Irr(\Z(\C))\to \Irr(\Z(\D))$ is an $S$-equivalence. Then the following hold:
\begin{enumerate}\item[(i)] $d_{f(X)} = d_X$, for all $X \in \Irr(\C)$. In particular, $\dim \Z(\C) = \dim \Z(\D)$.
\item[(ii)] $f:\Irr(\Z(\C))\to \Irr(\Z(\D))$ is a Grothendieck equivalence.
\item[(iii)] For every fusion subcategory $\E$ of $\Z(\C)$ we have $f(\E') = f(\E)'$. In particular, $f(\E)$ is symmetric (respectively, non-degenerate) if and only if so is $\E$.
\item[(iv)] For every fusion subcategory $\E$ of $\Z(\C)$, $f$ maps the projective centralizer of $\E$ to the projective centralizer of $f(\E)$.
\end{enumerate}
\end{lemma}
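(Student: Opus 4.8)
The plan is to exploit that $\Z(\C)$ and $\Z(\D)$ are modular categories, so that all of the invariants in question (quantum dimensions, fusion rules, duality, and centralizers) are encoded in their $S$-matrices, which $f$ preserves by hypothesis. I would begin with (i). In any modular category $S_{X,\uno}=\Tr(c_{\uno,X}c_{X,\uno})=\Tr(\id_X)=d_X$, since the double braiding with the unit object is the identity. As $f(\uno)=\uno$, this gives $d_{f(X)}=S_{f(X),\uno}=S_{f(X),f(\uno)}=S_{X,\uno}=d_X$ for every $X\in\Irr(\Z(\C))$; summing over the bijectively identified simple objects yields $\dim\Z(\C)=\sum_X d_X^2=\sum_X d_{f(X)}^2=\dim\Z(\D)$.

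The key preliminary step, and the one I expect to be the main obstacle, is to show that $f$ commutes with duality, that is $f(X^*)=f(X)^*$. For this I would invoke the charge-conjugation identity $(S^2)_{X,Y}=\dim(\Z(\C))\,\delta_{X,Y^*}$ valid in any modular category \cite{BK}. Since $f$ is a bijection preserving every entry of $S$, it preserves $S^2$: reindexing the sum over $\Irr(\Z(\D))$ through $f$ gives $(S^2)_{f(X),f(Y)}=\sum_{T}S_{f(X),f(T)}S_{f(T),f(Y)}=\sum_T S_{X,T}S_{T,Y}=(S^2)_{X,Y}$. Comparing both sides of the identity and using $\dim\Z(\C)=\dim\Z(\D)$ from (i), we obtain $\delta_{f(X),f(Y)^*}=\delta_{X,Y^*}$, which forces $f(X^*)=f(X)^*$. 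This is the only place where a nontrivial structural property of the $S$-matrix (beyond preservation of individual entries) is needed; once it is in place the remaining items are essentially bookkeeping.

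With duality under control, (ii) follows from the Verlinde formula \eqref{verlinde}. Reindexing the sum over $\Irr(\Z(\D))$ as $\{f(T):T\in\Irr(\Z(\C))\}$ and substituting $S_{f(A),f(B)}=S_{A,B}$, $d_{f(T)}=d_T$, $f(Z)^*=f(Z^*)$ and $\dim\Z(\D)=\dim\Z(\C)$, one reads off $N^{f(Z)}_{f(X),f(Y)}=N^Z_{X,Y}$ for all $X,Y,Z\in\Irr(\Z(\C))$. Together with $f(\uno)=\uno$ this is precisely the statement that $f$ is a Grothendieck equivalence, so in particular $f$ induces the lattice bijection of Remark \ref{biy}, with $f(\Irr(\E))=\Irr(f(\E))$ for every fusion subcategory $\E$.

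Finally, (iii) and (iv) reduce to the numerical characterizations of the Müger and projective centralizers. I would use that for simple objects $X,Y$ of a premodular category $S_{X,Y}=d_Xd_Y$ holds exactly when $c_{Y,X}c_{X,Y}=\id_{X\otimes Y}$, and $|S_{X,Y}|=d_Xd_Y$ holds exactly when this double braiding is a scalar \cite{DGNOI}. Since $f$ preserves $S_{X,Y}$ and the quantum dimensions exactly, the condition ``$S_{X,Y}=d_Xd_Y$ for all $Y\in\Irr(\E)$'' transports verbatim along $f$; hence $X\in\Irr(\E')$ iff $f(X)\in\Irr(f(\E)')$, which gives $f(\E')=f(\E)'$, and the same argument with $|S_{X,Y}|=d_Xd_Y$ yields (iv). The ``in particular'' clause follows because $f$ respects inclusions and intersections of fusion subcategories: $\E$ is symmetric iff $\E\subseteq\E'$ iff $f(\E)\subseteq f(\E')=f(\E)'$, and $\E$ is non-degenerate iff its Müger center $\E\cap\E'$ equals $\vect$ iff $f(\E)\cap f(\E)'=\vect$.
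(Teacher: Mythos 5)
Your proof is correct, and its skeleton matches the paper's: (i) from $S_{\uno,X}=d_X$ and $f(\uno)=\uno$, (ii) from the Verlinde formula, (iii) from the criterion that simple $X,Y$ centralize each other iff $S_{X,Y}=d_Xd_Y$. Two differences are worth recording. First, for (ii) the paper merely asserts that the claim "follows from (i) and the Verlinde formula"; since that formula involves the entry $S_{Z^*T}$, the reindexing argument only closes once one knows $f(Z^*)=f(Z)^*$, and your charge-conjugation step $(S^2)_{X,Y}=\dim\Z(\C)\,\delta_{X,Y^*}$ supplies exactly this (equivalently, one can first run Verlinde with $Z=\uno$). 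The paper records duality-preservation only afterwards, as a consequence of (ii) via Proposition \ref{groth-eq}(iii), so you have made explicit a step the paper leaves implicit; this is the more careful write-up. Second, for (iv) the paper takes a genuinely different route: it invokes \cite[Proposition 3.22]{DGNOI}, by which $X$ projectively centralizes $Y$ iff $X$ centralizes $Y\otimes Y^*$, and then deduces (iv) from (iii) together with the Grothendieck equivalence of (ii) (used to identify $f(Y)\otimes f(Y)^*$ with the image of $Y\otimes Y^*$). Your route uses instead the pointwise numerical criterion that $X,Y$ projectively centralize each other iff the double braiding is a scalar, iff $|S_{X,Y}|=d_Xd_Y$; this is more direct, but the paper's reduction has the mild advantage of never invoking absolute values, which over the abstract field $k$ require an embedding into $\mathbb{C}$ (or a restatement of the criterion as $S_{X,Y}=\alpha\, d_Xd_Y$ with $\alpha$ a root of unity). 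Either criterion is expressed purely in data preserved by $f$, so both arguments are sound.
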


\begin{proof}  For every simple object $X$ of $\Z(\C)$ we have $d_X = d_\uno d_X =  S_{\uno, X} = S_{\uno, f(X)} = d_\uno d_{f(X)} = d_{f(X)}$, and we get (i).  Now part (ii) follows from (i) and the Verlinde formula \eqref{verlinde}.
Part (iii) follows from the fact that two simple objects $X$ and $Y$ centralize each other if and only if $S_{X, Y} = d_Xd_Y$.

We now show part (iv). Let $X$ and $Y$ be simple objects of $\C$. It follows from \cite[Proposition 3.22]{DGNOI} that $X$ belongs to the projective centralizer of $Y$ if and only if $X$ belongs to the centralizer of $Y \otimes Y^*$. In view of part (iii) this happens if and only if $f(X)$ centralizes $f(Z)$, for all $Z \in \Irr(\C)$ such that $N^Z_{Y\otimes Y^*} \neq 0$. Since, by (ii), $f$ is a Grothendieck equivalence, then $f(Y)^* = f(Y^*)$ (Proposition \ref{groth-eq} (iii)), and it follows that the last condition is equivalent to the condition that $f(X)$ centralizes $f(Y)\otimes f(Y)^*$, that is, $f(X)$ belongs to the projective centralizer of $f(Y)$.
\end{proof}

\begin{theorem}\label{s-gpttic} Let $\C$ and $\D$ be $S$-equivalent spherical fusion categories. Then $\C$ is group-theoretical if and only if so is $\D$.
\end{theorem}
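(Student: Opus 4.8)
The plan is to reduce the statement to an intrinsic property of the Drinfeld center and then transport that property along the $S$-equivalence using Lemma \ref{s-eq}. The key tool is the characterization of group-theoretical categories in terms of Lagrangian subcategories: a fusion category $\C$ is group-theoretical if and only if the modular category $\Z(\C)$ contains a \emph{Lagrangian subcategory}, that is, a Tannakian subcategory $\E$ satisfying $\E' = \E$; see \cite{DGNOI}. Since this is a property of $\Z(\C)$ alone, and since an $S$-equivalence $f\colon \Irr(\Z(\C)) \to \Irr(\Z(\D))$ is, by Lemma \ref{s-eq} (ii), a Grothendieck equivalence $\Z(\C) \to \Z(\D)$ preserving quantum dimensions and centralizers, it suffices to show that $f$ carries Lagrangian subcategories to Lagrangian subcategories. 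The statement is symmetric in $\C$ and $\D$, so proving one implication yields both.

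So assume $\C$ is group-theoretical and let $\E \subseteq \Z(\C)$ be a Lagrangian subcategory. Consider the fusion subcategory $f(\E) \subseteq \Z(\D)$ corresponding to $\E$ under the induced bijection of lattices of fusion subcategories (Remark \ref{biy}, applicable since $f$ is a Grothendieck equivalence). By Lemma \ref{s-eq} (iii), $f(\E)$ is symmetric, because $\E$ is; and, again by Lemma \ref{s-eq} (iii), $f(\E)' = f(\E') = f(\E)$, so that $f(\E)$ is self-centralizing. Thus $f(\E)$ is a symmetric, self-centralizing subcategory of $\Z(\D)$, and it remains only to verify that $f(\E)$ is Tannakian; once this is done, $f(\E)$ is a Lagrangian subcategory of $\Z(\D)$ and hence $\D$ is group-theoretical.

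I expect this last step --- preservation of the Tannakian condition --- to be the main obstacle, since an $S$-equivalence a priori records no information about the twists $\theta_X$, and it is precisely the twists that distinguish a Tannakian subcategory from a merely symmetric (super-Tannakian) one. The way around this is to use the quantum dimensions, which $f$ \emph{does} preserve by Lemma \ref{s-eq} (i). Indeed, a symmetric fusion category is Tannakian if and only if all of its simple objects have positive quantum dimension, equivalently trivial twist (see \cite{deligne}, \cite{DGNOI}); in particular, since $\E$ is Tannakian, $d_X > 0$ for every $X \in \Irr(\E)$. As $d_{f(X)} = d_X$ by Lemma \ref{s-eq} (i), every simple object of $f(\E)$ has positive quantum dimension, and therefore the symmetric category $f(\E)$ is Tannakian. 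This completes the argument: $f(\E)$ is Lagrangian, whence $\D$ is group-theoretical, and the converse follows by symmetry.
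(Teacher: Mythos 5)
Your reduction to the center and your use of Lemma \ref{s-eq} are fine, and you have correctly located the crux: an $S$-equivalence records nothing about twists, so preservation of the Tannakian property is exactly what needs an argument. But your resolution of this difficulty is false. It is not true that a symmetric fusion category (with the ribbon structure inherited from the ambient modular category) is Tannakian if and only if all its simple objects have positive quantum dimension, nor is positivity of dimensions equivalent to triviality of the twist. Concretely, in $\Z(\Rep \Zz_2) \cong \Rep D(\Zz_2)$ with its canonical spherical structure, write $\Irr(\Z(\Rep\Zz_2)) = \{\uno, e, m, f\}$, where all quantum dimensions equal $1$, $\theta_e = \theta_m = 1$, $\theta_f = -1$, $S_{X,X} = 1$ and $S_{X,Y} = -1$ for distinct nontrivial $X, Y$. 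The subcategory $\{\uno, f\}$ is symmetric, self-centralizing, and has all quantum dimensions positive, yet it is braided equivalent to $\svect$ (the self-braiding of $f$ is $-\id$), hence not Tannakian. Moreover, this example shows your transport step itself breaks down, not merely the lemma you invoke for it: the bijection fixing $\uno$ and $m$ and exchanging $e \leftrightarrow f$ preserves the fusion rules and the $S$-matrix, so it is an $S$-equivalence of $\Rep\Zz_2$ with itself, and it carries the Lagrangian subcategory $\{\uno, e\}$ onto the non-Tannakian subcategory $\{\uno, f\}$. Thus $f(\E)$ genuinely need not be Lagrangian; no argument using only data preserved by $f$ can show that it is, since the restriction of the $S$-matrix to any symmetric subcategory is just $(d_X d_Y)_{X,Y}$ and cannot distinguish $\Rep G$ from a super-Tannakian category with the same dimensions.

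The gap is reparable, and the repair is essentially the paper's proof: one must trade the Lagrangian-subcategory characterization for a twist-free criterion whose ingredients are all visible to the $S$-matrix. By \cite[Corollary 4.14]{dgno-gpttic}, an \emph{integral} modular category is group-theoretical if and only if it contains a symmetric subcategory $\E$ with $\E'_{ad} \subseteq \E$; this involves only symmetry, centralizers and adjoint subcategories, which are preserved under $S$-equivalence by Lemma \ref{s-eq} and Proposition \ref{groth-eq} (iv) (integrality of $\Z(\D)$ also transports, since Grothendieck equivalences preserve Frobenius--Perron dimensions). In fact your own subcategory $f(\E)$ already satisfies this criterion: being symmetric and self-centralizing, $f(\E)'_{ad} = f(\E)_{ad} \subseteq f(\E)$. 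So your argument becomes correct if the last step ``$f(\E)$ is Tannakian, hence Lagrangian'' is replaced by an appeal to this corollary, which yields that $\Z(\D)$, and hence $\D$, is group-theoretical --- equivalently, that \emph{some} Lagrangian subcategory of $\Z(\D)$ exists, even though the particular subcategory $f(\E)$ may fail to be one.
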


\begin{proof} We have that $\C$ is group-theoretical if and only if $\Z(\C)$ is group-theoretical. Suppose that this is the case. In particular $\Z(\C)$, and therefore also $\Z(\D)$, are integral. Since $\Z(\C)$ is a modular category, \cite[Corollary 4.14]{dgno-gpttic} implies that it contains a symmetric subcategory $\E$ such that $\E'_{ad} \subseteq \E$. Since every $S$-equivalence preserves centralizers, symmetric subategories and is a Grothendieck equivalence between $\Z(\C)$ and $\Z(\D)$, this implies that $f(\E)$ is a symmetric subcategory of $\Z(\D)$ and $f(\E)'_{ad} = f(\E'_{ad}) \subseteq f(\E)$ (see Proposition \ref{groth-eq} (iv)). Hence $\Z(\D)$ and therefore also $\D$ are group-theoretical. This implies the theorem.
\end{proof}

\begin{lemma}\label{s-eq-groups} Let $G$ and $\Gamma$ be finite groups and let $\omega: G \times G \times G \to k^*$, $\omega': \Gamma \times \Gamma \times \Gamma \to k^*$ be $3$-cocycles on $G$ and $\Gamma$, respectively. Suppose that the categories $\C(G, \omega)$ and $\C(\Gamma, \omega')$ are $S$-equivalent. Then $G$ is solvable if and only if so is $\Gamma$.
\end{lemma}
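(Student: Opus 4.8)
The plan is to move to the Drinfeld centers, where the $S$-equivalence lives, and to detect solvability through a Lagrangian subcategory. Write $Z=\Z(\C(G,\omega))\cong\Rep D^\omega(G)$ and $Z'=\Z(\C(\Gamma,\omega'))$; these are modular categories, and each carries a canonical \emph{Lagrangian} Tannakian subcategory, namely $\Rep G\subseteq Z$ and $\Rep\Gamma\subseteq Z'$ (the objects supported on the conjugacy class of the identity, on which the associator is trivial); these are symmetric and self-centralizing, $(\Rep G)'=\Rep G$ and $(\Rep\Gamma)'=\Rep\Gamma$, of dimension $\sqrt{\FPdim Z}$ and $\sqrt{\FPdim Z'}$ respectively \cite{gp-ttic, DGNOI}. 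By Lemma \ref{s-eq} the $S$-equivalence $f\colon\Irr(Z)\to\Irr(Z')$ is a Grothendieck equivalence that preserves quantum dimensions, centralizers and symmetric subcategories. Hence $\M:=f(\Rep G)$ is a symmetric subcategory of $Z'$ with $\M'=f((\Rep G)')=\M$, so $\M$ is again Lagrangian, and $\FPdim\M=\FPdim(\Rep G)=|G|$. By Deligne's theorem $\M$ is, as a fusion category, equivalent to $\Rep K$ for a finite group $K$ \cite{deligne}; and since $f$ restricts to a Grothendieck equivalence $\Rep G\to\M\cong\Rep K$, the groups $G$ and $K$ have the same fusion rules, hence the same character table, and therefore $G$ is solvable if and only if $K$ is solvable \cite{isaacs}.

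\medbreak The statement then follows at once from the sub-claim that, \emph{for every Lagrangian subcategory $\L$ of $Z'$, the category $Z'$ is solvable if and only if $\L$ is solvable}. Indeed, applying this to $\L=\M$ gives that $Z'$ is solvable iff $\M\cong\Rep K$ is solvable iff $K$, equivalently $G$, is solvable; applying it to $\L=\Rep\Gamma$ gives that $Z'$ is solvable iff $\Gamma$ is solvable. Comparing the two equivalences yields that $G$ is solvable if and only if $\Gamma$ is solvable.

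\medbreak It remains to prove the sub-claim. Let $\L\subseteq Z'$ be Lagrangian; by Deligne's theorem $\L\cong\Rep(K_0,u)$, and its maximal Tannakian subcategory is $\E_0\cong\Rep H$ with $H=K_0/\langle u\rangle$, where $\L$ equals $\E_0$ if $u=e$ and is a $\Zz_2$-extension of $\E_0$ otherwise. In particular $\L$ is solvable iff $\E_0$ is solvable iff $H$ is solvable. Since $\E_0$ is Tannakian, Proposition \ref{e-prime} gives that $Z'$ is solvable iff its centralizer $\E_0'$ is solvable. As $Z'$ is non-degenerate, the double-centralizer theorem gives $(\E_0')'=\E_0$, so that $\E_0$ is exactly the M\"uger center of $\E_0'$; de-equivariantizing $\E_0'$ by $\E_0\cong\Rep H$ therefore yields a braided fusion category $\B=(\E_0')_H$ with $\E_0'\cong\B^{H}$ and $\FPdim\B=\FPdim Z'/|H|^{2}$ \cite{DGNOI}. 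Since $\FPdim Z'=|K_0|^{2}$ and $|H|$ equals $|K_0|$ or $|K_0|/2$, we obtain $\FPdim\B\in\{1,4\}$, a power of $2$; hence $\B$ is solvable because every fusion category of prime-power Frobenius-Perron dimension is solvable \cite{ENO2}. Consequently $\E_0'\cong\B^{H}$ is solvable if and only if $H$ is solvable, by \cite[Proposition 4.1]{ENO2} together with the fact that $\Rep H$ is a fusion subcategory of $\B^{H}$. Combining, $Z'$ is solvable iff $H$ is solvable iff $\L$ is solvable, which proves the sub-claim.

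\medbreak The point I expect to be most delicate is that an $S$-equivalence remembers only the $S$-matrix and not the ribbon twists, so $\M=f(\Rep G)$ need not be Tannakian: it may be genuinely super-Tannakian, and then Proposition \ref{e-prime} does not apply to $\M$ directly. The device of descending to the maximal Tannakian subcategory $\E_0\subseteq\M$ and analysing its centralizer through the de-equivariantization $\B$ is exactly what handles this. The super-Tannakian case is the one with $|H|=|K_0|/2$ and $\FPdim\B=4$, while the Tannakian case gives $\FPdim\B=1$; in both cases $\B$ is solvable for dimension reasons, so the argument runs uniformly.
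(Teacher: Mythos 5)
Your proof is correct, but it propagates solvability along a genuinely different path than the paper does. The shared core is the same: transport a canonical Lagrangian subcategory along the $S$-equivalence using Lemma \ref{s-eq}, apply Deligne's theorem to identify the image with some $\Rep K$ as a fusion category, and use the fact that Grothendieck equivalent representation categories of groups have equal character tables, hence the same solvability. The divergence is in direction and in what carries the solvability. The paper transports $\Rep\Gamma\subseteq \Z(\C(\Gamma,\omega'))$ \emph{into} $\Z(\C(G,\omega))$, where, assuming $G$ solvable, the whole ambient center is already solvable (solvability is stable under Drinfeld centers and fusion subcategories, \cite[Proposition 4.1]{ENO2}); the transported symmetric subcategory $\Rep F$ is then solvable for free, $F$ is solvable, and $\Gamma$ is solvable by the character-table argument --- a one-way argument completed by the symmetry of the statement. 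You instead transport $\Rep G$ \emph{out} into $Z'$, where nothing is a priori solvable, and so you must prove your sub-claim that solvability of $Z'$ is \emph{detected} by any symmetric self-centralizing subcategory; this costs you Proposition \ref{e-prime}, the double-centralizer theorem, a de-equivariantization, a dimension count, and the categorical Burnside $p^aq^b$ theorem of \cite{ENO2}. What your route buys is a stronger standalone structural fact (a biconditional detection principle for solvability via Lagrangian-type subcategories) and an honest treatment of the super-Tannakian subtlety you flag at the end --- your worry there is well founded, since the $S$-matrix does not remember the twists, so $f(\Rep G)$ need not be Tannakian and Proposition \ref{e-prime} cannot be applied to it directly. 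What the paper's route buys is that this subtlety never arises: by transporting into the side already known to be solvable, it only ever needs Deligne's theorem at the level of fusion categories, where the braiding (and hence the Tannakian/super-Tannakian distinction) is irrelevant, and the entire sub-claim becomes unnecessary.
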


\begin{proof} It is enough to show the 'if' direction. Thus, let us assume that $G$ is solvable. Let $f: \Irr(\Z(\C(\Gamma, \omega'))) \to \Irr(\Z(\C(G, \omega)))$ be an $S$-equivalence. The center of the category $\C(\Gamma, \omega')$ contains a Tannakian subcategory $\E$ equivalent to $\Rep \Gamma$ as braided fusion categories. In view of Lemma \ref{s-eq}, $f(\E)$ is a symmetric fusion subcategory of $\Z(\C(G, \omega))$ which is Grothendieck equivalent to $\Rep \Gamma$. Being symmetric, $f(\E)$ is equivalent as a fusion category to the category $\Rep F$ for some finite group $F$. Then $F$ is solvable because $\Z(\C(G, \omega))$ is solvable, by \cite[Proposition 4.5]{ENO2}.
Since the categories $\Rep \Gamma$ and $\Rep F$ are Grothendieck equivalent, then the groups $\Gamma$ and $F$ have the same character table. This implies that $\Gamma$ is solvable. Hence $\C(\Gamma, \omega')$ is solvable, as claimed. \end{proof}

\begin{theorem}\label{s-equiv-gt} Let $\C$ and $\D$ be $S$-equivalent spherical fusion categories and suppose that $\C$ is group-theoretical. Then $\C$ is solvable if and only if $\D$ is solvable.
\end{theorem}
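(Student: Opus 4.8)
The plan is to reduce the statement to the pointed case, where it has already been settled in Lemma \ref{s-eq-groups}. First I would invoke Theorem \ref{s-gpttic}: since $\C$ is group-theoretical and being group-theoretical is preserved under $S$-equivalence, the category $\D$ is group-theoretical as well. Hence there are finite groups $G$, $\Gamma$ and $3$-cocycles $\omega$ on $G$, $\omega'$ on $\Gamma$ such that $\C$ is Morita equivalent to $\C(G, \omega)$ and $\D$ is Morita equivalent to $\C(\Gamma, \omega')$. Because solvability is invariant under Morita equivalence \cite[Proposition 4.1]{ENO2}, and a pointed category $\C(G, \omega)$ is solvable if and only if the group $G$ is solvable (see \cite[Proposition 4.5]{ENO2}), it suffices to prove that $G$ is solvable if and only if $\Gamma$ is solvable.

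The central step is to show that Morita equivalent spherical fusion categories are $S$-equivalent. By \cite[Theorem 3.1]{ENO2}, the Morita equivalences above yield braided equivalences $\Z(\C) \cong \Z(\C(G, \omega))$ and $\Z(\D) \cong \Z(\C(\Gamma, \omega'))$. Since $\C$ and $\D$ are group-theoretical, their Drinfeld centers are group-theoretical, hence integral and therefore pseudo-unitary; I would equip each center with its canonical spherical structure, for which quantum dimensions coincide with Frobenius--Perron dimensions \cite[Proposition 8.23]{ENO}. A braided tensor equivalence between integral modular categories preserves Frobenius--Perron dimensions and the squared braiding, so it carries the canonical spherical structure to the canonical one and thus preserves quantum traces; in particular it preserves every entry $S_{X, Y} = \Tr(c_{Y, X} c_{X, Y})$ of the $S$-matrix. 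Consequently the bijection on simple objects induced by each of these equivalences is an $S$-equivalence, whence $\C$ is $S$-equivalent to $\C(G, \omega)$ and $\D$ is $S$-equivalent to $\C(\Gamma, \omega')$.

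To finish, I would use that $S$-equivalence is an equivalence relation: composing the three $S$-equivalences $\C(G, \omega) \sim \C \sim \D \sim \C(\Gamma, \omega')$ produces an $S$-equivalence between the pointed categories $\C(G, \omega)$ and $\C(\Gamma, \omega')$. Lemma \ref{s-eq-groups} then shows that $G$ is solvable if and only if $\Gamma$ is solvable, which by the reduction of the first paragraph is precisely the assertion that $\C$ is solvable if and only if $\D$ is solvable.

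The main obstacle I anticipate is the second paragraph, namely verifying that the braided equivalence of centers coming from Morita equivalence actually respects the $S$-matrix. This amounts to checking that it is compatible with the canonical spherical structures, and the argument genuinely uses that the categories in play are integral, so that a distinguished, Frobenius--Perron, spherical structure is available and is automatically preserved by any tensor equivalence. Without integrality one would have to track the spherical structure through the equivalence by hand, which is exactly why the group-theoretical hypothesis is essential here.
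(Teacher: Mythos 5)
Your proposal is correct, and it shares the paper's overall skeleton: transfer group-theoreticity to $\D$ via Theorem \ref{s-gpttic}, identify both Drinfeld centers with centers of pointed categories, and conclude with Lemma \ref{s-eq-groups}. Where you genuinely diverge is the middle step. The paper does not pass through the Morita equivalence of $\D$ with a pointed category; instead it transports the Lagrangian Tannakian subcategory $\E \subseteq \Z(\C)$ (coming from $\Z(\C) \cong \Z(\C(G,\omega))$) through the given $S$-equivalence $f$, using Lemma \ref{s-eq} to see that $f(\E) \subseteq \Z(\D)$ is a symmetric subcategory with $(\dim f(\E))^2 = \dim \Z(\D)$, and then invokes \cite[Theorem 4.8]{dgno-gpttic}, together with integrality and central charge $1$ of $\Z(\D)$, to conclude $\Z(\D) \cong \Z(\C(\Gamma,\omega'))$. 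You instead read off the same conclusion from the definition of group-theoretical plus \cite[Theorem 3.1]{ENO2}, which is more economical and equally valid, since Theorem \ref{s-gpttic} already delivers that $\D$ is group-theoretical; the paper's longer route has the mild advantage of exhibiting the Lagrangian datum inside $\Z(\D)$ explicitly. A further merit of your write-up is that you spell out what the paper compresses into its final sentence ``the theorem follows from Lemma \ref{s-eq-groups}'': to apply that lemma one must compose $f$ with the two braided equivalences of centers, and this is legitimate only because those equivalences preserve $S$-matrices. Your justification --- group-theoretical categories are integral, hence pseudo-unitary, and a tensor equivalence carries the canonical spherical structure to the canonical one, so braided equivalences preserve quantum traces of squared braidings \cite[Propositions 8.23 and 8.24]{ENO} --- is exactly the missing bookkeeping, and it is indeed where the group-theoretical hypothesis is used. (Both your proof and the paper's tacitly take the spherical structures on these pseudo-unitary categories to be the canonical ones; this convention is shared, not a gap particular to your argument.)
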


\begin{proof} Since $\C$ is group-theoretical, $\Z(\C)$ is equivalent to the center of a pointed fusion category $\Z(\C(G, \omega))$, for some finite group $G$ and $3$-cocycle $\omega$ on $G$. Hence $\Z(\C)$ contains a Tannakian subcategory $\E$ equivalent to $\Rep G$ as braided fusion categories, such that $(\dim \E)^2 = \dim \Z(\C)$.

 Being Grothendieck equivalent to $\Z(\C)$, $\Z(\D)$ is also group-theoretical, in view of Theorem \ref{s-gpttic}. Thus $\Z(\C)$ is an integral modular category of dimension $(\dim \D)^2$ and central charge $1$. Note in addition that if $f$ is an $S$-equivalence, then $f(\E)$ is a symmetric subcategory of $\Z(\D)$ such that $\dim \Z(\D) = (\dim f(\E))^2$. Theorem 4.8 of \cite{dgno-gpttic} implies that $\Z(\D)$ is equivalent to the center of a pointed fusion category, that is, $\Z(\D) \cong \Z(\C(\Gamma, \omega'))$, for some finite group $\Gamma$ and $3$-cocycle $\omega'$ on $\Gamma$. Then the  theorem follows from Lemma \ref{s-eq-groups}. \end{proof}

\bibliographystyle{amsalpha}

\end{document}